\newcommand{\stkout}[1]{\ifmmode\text{\sout{\ensuremath{#1}}}\else\sout{#1}\fi}
\def\id{{\fontsize{.85em}{1.1em}\selectfont1}\normalfont\kern-.8ex1}
\numberwithin{equation}{section}
\newcommand\tenq[2][1]{%
 \def\useanchorwidth{T}%
  \ifnum#1>1%
    \stackunder[0pt]{\tenq[\numexpr#1-1\relax]{#2}}{\scriptscriptstyle\widetilde}%
  \else%
    \stackunder[1pt]{#2}{\scriptscriptstyle\widetilde}%
  \fi%
}
\newcommand{\strikeout}[1]{%
\ifmmode%
      \tikz[inner sep=0.5pt,baseline] \node [strike out,draw=OrangeRed,anchor=text]{$#1$};%
\else%
      \tikz[inner sep=0.5pt,baseline] \node [strike out,draw=OrangeRed,anchor=text]{#1};%
\fi%
}
\newtheorem{theorem}{Theorem}[section]
\newtheorem{proposition}[theorem]{Proposition}
\newtheorem{corollary}[theorem]{Corollary}
\newtheorem{lemma}[theorem]{Lemma}
\newtheorem{remark}[theorem]{Remark}
\newtheorem{definition}[theorem]{Definition}
\newcommand{\be}{\begin{equation}}
\newcommand{\ee}{\end{equation}}
\newcommand{\bp}{\begin{proof}}
\newcommand{\ep}{\end{proof}}
\newcommand{\bel}{\begin{equation}\label}
\newcommand{\eeq}{\end{equation}}
\newcommand{\bea}{\begin{eqnarray}}
\newcommand{\eea}{\end{eqnarray}}
\newcommand{\bee}{\begin{eqnarray*}}
\newcommand{\eee}{\end{eqnarray*}}
\newcommand{\ben}{\begin{enumerate}}
\newcommand{\een}{\end{enumerate}}
\newcommand{\p}{\partial}
\newcommand{\ds}{\displaystyle}
\newcommand{\zf}{\mathfrak{z}}
\newcommand{\cf}{\mathfrak{c}}
\newcommand{\cH}{{\mathcal H}}
\newcommand{\cN}{{\mathcal N}}
\newcommand{\cZ}{{\mathcal Z}}
\newcommand{\R}{{\mathbb R}}
\newcommand{\Z}{{\mathbb Z}}
\newcommand{\C}{{\mathbb C}}
\newcommand{\N}{{\mathbb N}}
\newcommand{\ka}{\kappa}
\newcommand{\al}{\alpha}
\newcommand{\si}{\sigma}
\newcommand{\arctanh}{\operatorname{arctanh}}
\newcommand{\sech}{\operatorname{sech}}
\newcommand{\arccotanh}{\operatorname{arccotanh}}
\newcommand{\re}{\operatorname{Re}}
\DeclareMathOperator{\sgn}{sgn}
\title[]{Orbital stability of the black soliton for the  quintic Gross-Pitaevskii equation}
\author[Miguel \'A. Alejo]{Miguel \'A. Alejo}
\address{Departamento de Matem\'aticas. Universidad de C\'ordoba\\
C\'ordoba, Spain.}
\email{malejo@uco.es}
\author[A.J. Corcho]{ Ad\'an J. Corcho}
\address{Departamento de Matem\'aticas. Universidad de C\'ordoba\\
C\'ordoba, Spain.}
\email{a.corcho@uco.es}
\thanks{M.A. Alejo and A.J. Corcho  thank the  department of Mathematics at UFSC, Brazil, and the 
Instituto de Matem\'atica, UFRJ, Brazil, for their support and hospitality during several stays there while this work was being done.}
\subjclass{Primary 35Q55, 35Q60; Secondary 35B65}
\keywords{quintic Gross Pitaevskii, black soliton, Cauchy Problem, orbital stability}
\date{\today}
\begin{document}

\maketitle \markboth{Orbital stability of the black soliton - Quintic GP} {M.A. Alejo  and A.J. Corcho}

\setcounter{page}{1}

\begin{quote}
\textbf{Abstract.}
{\small In this work, a proof of the orbital stability of the black soliton solution of the quintic Gross-Pitaevskii 
    equation in one spatial dimension is obtained.  We first build and show explicitly  black and dark soliton solutions and we prove that the corresponding Ginzburg-Landau 
	energy is coercive around them  by  using some orthogonality conditions related  to perturbations of the black and dark solitons.  
	The existence of suitable perturbations around black and dark solitons satisfying the required  orthogonality conditions is deduced 
	from an Implicit Function Theorem. In fact, these perturbations involve dark solitons with sufficiently small speeds and some proportionality 
	factors ari\-sing from the explicit expression of their spatial derivative.  
}
\end{quote}
\tableofcontents

\section{Introduction}\label{Sec1}
In this work we consider the one-dimensional quintic Gross-Pitaevskii equation (quintic GP)
\be\label{5gp}
\begin{cases}
iu_t +u_{xx}=(|u|^4-1)u,\quad (t,x)\in \R^2, \medskip \\
u(0,x)=u_0(x),
\end{cases}
\ee
where $u$ is a complex-value function and the initial data $u_0$ satisfies the boundary condition

\be \label{5gp-boundary} 
\lim\limits_{|x|\to +\infty} |u_0(x)|^2 = 1.
\ee
From the physical point of view {\color{black}it} is interesting to look for solutions $u(t, x)$ of \eqref{5gp} satisfying  the boundary condition \eqref{5gp-boundary} for all $t\ge0$.

This is a \emph{defocusing} nonlinear Schr\"odinger equation modeling for example ultra-cold dilute Bose gases in highly
elongated traps. More specifically, it describes dynamics of weak density
modulations of one dimensional bosonic clouds (Tonks-Girardeau gases) when the tight transverse
confinement potential is turned off. In fact \eqref{5gp}, in the case of one dimensional atomic strings,
allows to explain many fermionic properties arising in one dimensional chains of bosons, phenomena usually named as  \emph{bosonic fermionization}. See 
\cite{Kevrek,Kolom,Lieb,Minguz} and references therein for a complete background on the physical phenomena accounted for by this quintic defocusing model.

\medskip 

The quintic GP equation is phase (also called $U(1)$ invariance) and translation invariant, meaning that if $u$ is a solution of \eqref{5gp}, then 

\[e^{i\theta}u(t, x + a),~ a\in\R,~\theta\in\R,
\]
\noindent
is also a solution of \eqref{5gp}. The quintic GP \eqref{5gp} also bears  Galilean invariance, namely
$$e^{i(
		\frac{c}{2}x - \frac{c^2}{4}t)}u(t, x - ct),~c\in\R,$$
\noindent
but this will not be used in our approach. Note moreover that in \eqref{5gp-boundary}
the  asymptotic value 1 can be changed to any number $\zeta>0$ without loss of generality by rescaling the values of $u$ through  
 $v=\zeta u(\zeta^4t,\,\zeta^2x)$. Under this change \eqref{5gp} recasts as 
\begin{equation*}
iv_t +v_{xx}=(|v|^4-\zeta ^4)v,\quad (t,x)\in \R^2.
\end{equation*}

Furthermore, and as far as we know, the quintic GP \eqref{5gp}-\eqref{5gp-boundary} 
is a non-integrable hamiltonian model (see \cite{Jia,Percy}), with well known low order conservation laws for \textit{regular solutions}, such as the mass 
\[
M[u](t):=\int_\R\big(1 - |u|^2\big)dx=M[u](0), 
\]
\noindent

\noindent
and the \emph{classical} energy 
\[E_1[u](t):=\int_\R\Big(|u_x|^2  - \tfrac{1}{3}(1- |u|^6)\Big)dx=E_1[u](0).\]
\noindent
In this work will be important the so called {\color{black}quintic}  {\it Ginzburg-Landau energy} given by

\[E_2[u]=E_1[u] + M[u],\]
\noindent
or explicitly
\begin{equation}\label{E2}
E_2[u](t):=\int_\R\Big(|u_x|^2  + \tfrac{1}{3}(1-|u|^2)^2(2+|u|^2)\Big)dx,
\end{equation}
which is also preserved along the flow. {\color{black}Another conserved quantity of \eqref{5gp} is the \emph{momentum}, which in the context of solutions verifying \eqref{5gp-boundary} can be suited in different forms. For example, for nonvanishing solutions (see \cite{KY}) is written as

\be\label{P1}
P_1[u](t):=\int_\R \langle iu,u_x\rangle_\C\left(1-\frac{1}{|u|^2}\right)dx.
\ee
\noindent
Moreover, considering vanishing solutions, in \cite{BGSS} it was introduced a renormalized version of the momentum \eqref{P1}, namely, (here $u(t,x)=A(t,x) e^{i\varphi(t,x)}$)

\be\label{P2}
P_2[u](t):=\lim_{R_1,R_2\rightarrow +\infty}\left(\frac12\int_{-R_1}^{R_2} \langle iu,u_x\rangle_\C dx-\frac12(\varphi(R_2)-\varphi(R_1))\right)\quad\text{mod}~\pi.
\ee
\noindent}

\medskip 
Here by \textit{regular solutions} we will understand those solutions that belong to the  energy space 
associated to \eqref{5gp}:
\begin{equation}\label{space-funtions}
\Sigma =\Big\{u\in H^1_{loc}(\R): \, u_x\in L^2(\R)\; \text{and}\; 1 -|u|^4\in L^2(\R)\Big\}.
\end{equation}

\medskip 
\noindent 
Notice that if $u\in \Sigma$,  then $1-|u|^2\in L^2(\R)$. Hence,
\be\label{sigma-well-defined}
(1-|u|^2)^2(2+|u|^2)=(1-|u|^2)^2 + (1-|u|^2)(1-|u|^4)\in L^1(\R), 
\ee
and $E_2[u]$ is well-defined.

\medskip
Some previous results on the Cauchy problem of \eqref{5gp} are well known in the literature.
For example, local well-posedness in the context of a Zhidkov space $\big\{ u\in L^{\infty}(\R),\; \p_xu\in L^2(\R)\big\}$ was shown in \cite{Zhidkov}
and  global well-posedness of \eqref{5gp}-\eqref{5gp-boundary} was established in \cite{Gallo}, 
where it was considered the general model  

\be\label{generalized-gp}
iu_t +u_{xx} + f(|u|^2)u=0,
\ee
with regular nonlinearity $f:\R^+ \rightarrow \R$ satisfying $f(1)=0$  and  $f'(1)<0$. \eqref{generalized-gp} includes, 
as particular cases, other important equations such as
\begin{itemize}
\item  Pure powers: $f(r) =1-r^p,\; p\in \Z^+$.

\item Cubic case ($p=1$): $f(r) =1-r,$  the cubic Gross-Pitaevskii (cubic GP) equation.

\item Cubic-Quintic case: $f(r)=(r-1)(2a +1 -3r)$ with $0< a<1$.

\item Quintic case ($p=2$): $f(r) =1-r^2,$  the quintic GP equation \eqref{5gp}.
\end{itemize}
More precisely, in {\cite[Theorem 1.1]{Gallo}} was proved that the Cauchy problem  for the quintic GP equation \eqref{5gp}-\eqref{5gp-boundary} 
is globally well-posed in the space 
\[\phi + H^1(\R),\]
for any $\phi$ verifying 
\[\phi \in C^2_b(\R),\quad \phi'\in H^2(\R), \quad |\phi|^2-1 \in L^2(\R).\]
\noindent
See \cite{BGS,Gerard} and \cite{Gerard2} for further reading on these generalized Schr\"odinger models.
 
\medskip
Concerning solutions, complex constants with modulus one are the simplest solutions contained in \eqref{5gp}. Moreover, and with 
respect to particular soliton solutions, and specifically to the stability of solitonic waves for \eqref{generalized-gp}, the situation is well 
understood in the case of the cubic GP equation, {\color{black} profiting} its integrable character  (\cite{Zak}). Indeed, it is well known that the black soliton of 
the cubic GP is
\[
\nu_0(x)=\tanh\left(\frac{x}{\sqrt{2}}\right),
\]
which is a stationary, i.e. time independent, wave solution.  Furthermore, the study of orbital and asymptotic stability for $\nu_0(x)$ 
was considered in several works 
\cite{BGSS, Gallay-Pelinovsky, GeZ, G-Smets}. Beside that, for some cases of the  cubic-quintic model  
($f(r)=(r-1)(2a +1 -3r)$), the stability of traveling solitonic bubbles was shown in \cite{Lin}. See \cite{Angulo, BGSS, PeliKivshar} for more details  
on these models. Finally, \cite{Chiron-2013} dealt with stability (and instability) problems for stationary and subsonic traveling waves 
giving an explicit condition on a general  $C^2$ nonlinearity $f$ in the NLS model. {\color{black}Once in this work we have obtained exact traveling wave solitons (also named as dark solitons), \cite[Theorem 24]{Chiron-2013} can be applied to study the orbital stability of stationary solutions (i.e. black solutions) but in another metric, well adapted to the $\Sigma$ space \eqref{space-funtions}, different from the metric used in the current work}.

\medskip 
In comparison with the cubic GP, the non-integrability of the quintic GP equation makes the search of solutions even harder as well as 
the rigorous study of the analytical properties related to them. Actually, and 
as far as we know, the \emph{black soliton} solution for the quintic GP was discovered in \cite[eqn.$(12)$]{Kolom}. Beside that, we present in this work the explicit expression of this solution  as well as its formal derivation 
(see Section \ref{Sec2}). Namely, the black soliton of  \eqref{5gp} is given by

\begin{equation}\label{black5gp}
\phi_0(x)=\sqrt{2}\frac{\tanh(x)}{\sqrt{3-\tanh^2(x)}},
\end{equation}
\noindent
which is a solution of 

\begin{equation}\label{edoblack1}
\phi'' + (1- \phi^4)\phi=0,
\end{equation}
\noindent
the corresponding differential equation describing stationary real waves of \eqref{5gp}  with $u(0,x)=\phi(x)$ (see Section \ref{Sec2} for further details). 
Therefore, it is natural to question whether, in the case of the quintic GP, the stability of $\phi_0$ is preserved in some sense. In fact, 
the main result of this work is the following (see Section \ref{Sec5} for a more detailed version and proof of this result):

\begin{theorem}\label{teorema1a}
The black soliton solution $\phi_0$ \eqref{black5gp} of the quintic GP equation \eqref{5gp} is orbitally stable in a subspace of the energy space $\Sigma$ 
\eqref{space-funtions}.
\end{theorem}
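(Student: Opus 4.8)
The plan is to prove orbital stability through a constrained variational (Lyapunov) argument in the spirit of Grillakis--Shatah--Strauss, adapted to the nonzero boundary condition \eqref{5gp-boundary}, using the conserved Ginzburg--Landau energy $E_2$ together with the momentum $P$. First I would record that $\phi_0$ is a critical point of $E_2$: computing the Euler--Lagrange equation of the functional \eqref{E2} recovers exactly the stationary equation \eqref{edoblack1}. Linearizing $E_2$ around $\phi_0$ and using that $\phi_0$ is real, the second variation decouples on the real and imaginary parts of the perturbation into two Schr\"odinger operators, $L_+=-\pd_{xx}+(5\phi_0^4-1)$ acting on the real part and $L_-=-\pd_{xx}+(\phi_0^4-1)$ acting on the imaginary part. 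The translation and phase symmetries \eqref{symetries} yield the zero modes $L_+\phi_0'=0$ and $L_-\phi_0=0$, which follow from \eqref{edoblack1} and its spatial derivative. Since $\phi_0$ is increasing, $\phi_0'$ is positive and is therefore the ground state of $L_+$, so $L_+\ge 0$ with one-dimensional kernel; by contrast $\phi_0$ is odd and changes sign once, so $L_-$ carries exactly one negative eigenvalue. The phase (imaginary) direction is thus where positivity fails, and controlling it is the crux of the problem.

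Next I would set up the modulated decomposition. In a tubular neighborhood of the two-parameter orbit $\{e^{i\theta}\phi_0(\cdot-a)\}$ I write $u=e^{i\theta}\big(\phi_c(\cdot-a)+w\big)$, introducing the dark-soliton speed $c$ as a third modulation parameter so that the family $\phi_c$ absorbs the negative mode of $L_-$. I then impose orthogonality conditions on $w$ against $\phi_0'$ and against the phase/speed direction, with the weights prescribed by the explicit spatial derivative of the dark soliton. Applying the Implicit Function Theorem to the map sending $(\theta,a,c)$ to these scalar products produces the parameters uniquely and smoothly as long as $u$ remains close to the orbit; differentiating the orthogonality conditions in time yields modulation equations of the form $\dot\theta,\dot a,\dot c=O(\|w\|)$, which bound the growth of the parameters along the flow.

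The heart of the argument is the coercivity estimate $E_2[u]-E_2[\phi_0]\ge\kappa\,\|w\|_{\cH}^2$ for $w$ subject to the orthogonality conditions, $\cH$ being the natural energy norm on perturbations. On the real part this is immediate from $L_+\ge 0$ together with orthogonality to $\phi_0'$. On the imaginary part the single negative eigenvalue of $L_-$ obstructs positivity, and I expect this to be the main obstacle. To remove it I would invoke the conservation of momentum: the dark solitons satisfy a momentum--speed relation whose slope at $c=0$ has the sign demanded by the Grillakis--Shatah--Strauss criterion, so fixing the value of $P[u]$ eliminates precisely the negative direction and, combined with the orthogonality conditions, restores coercivity on the full perturbation. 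Carrying this out rigorously in the nonzero-boundary-condition setting---where the imaginary part of $w$ need not lie in $L^2$ and the renormalized energy must be handled with care---is the technically delicate point, and is the reason for restricting to a subspace of $\Sigma$ in the statement.

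Finally I would close by a standard Lyapunov/bootstrap scheme. Since $E_2$ and $P$ are conserved and initially close to their values at $\phi_0$, the coercivity estimate controls $\|w(t)\|_{\cH}$ uniformly in time by the size of the initial perturbation, while the modulation equations keep $\theta(t),a(t)$ bounded and the momentum constraint forces the speed $c(t)$ to stay small. Undoing the decomposition then shows that $u(t)$ remains in a small tubular neighborhood of the orbit $\{e^{i\theta}\phi_0(\cdot-a)\}$ for all $t\ge 0$, which is the asserted orbital stability of $\phi_0$; applying the same estimates at a fixed small speed $c\neq 0$ yields the orbital stability of the dark soliton as a corollary.
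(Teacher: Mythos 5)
Your skeleton (modulation in $(\theta,a,c)$, coercivity of the conserved $E_2$ under orthogonality conditions, momentum to control the speed) matches the paper's, but the two implementations diverge at the decisive points, and two of your ``immediate'' steps are where the paper spends most of its effort. First, the paper never works with your decoupled operators $L_\pm$: it expands $E_2$ as in \eqref{E2expanBlack}, keeping only the single quadratic form $Q_0[\zf]=\tfrac12\int(|\zf_x|^2-\eta_0|\zf|^2)$ (an $L_-$-type form on \emph{both} components) and recovering the extra positivity of your $L_+$ from the signed nonlinear term $\int\phi_0^2\rho_0^2$, with $\rho_0=2\phi_0\zf_1+|\zf|^2$. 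This grouping is not cosmetic: the energy space contains non-decaying real perturbations (constants lie in $\cH_0$, and there are $u=\phi_0+w\in\cZ(\R)$ with small $d_0(u,\phi_0)$ whose real part tends to $-2$ at one infinity, i.e.\ $u\to-1$), so $\int(5\phi_0^4-1)w_1^2=+\infty$ on admissible perturbations and your pairing $\langle L_+w_1,w_1\rangle$ is simply not finite on the space; the expansion must be organized through $\rho_0$ and the weighted quantity $\|\phi_0^3\rho_0\|_{L^2}$ built into the metric \eqref{dc}. Relatedly, in the correct weighted formulation the negative direction appears in \emph{both} components (the constant is the weighted ground state of $Q_0$), which is why Proposition \ref{prop1} needs the condition $\int\eta_0\zf_1=0$ on the real part too, and why real-part coercivity is anything but immediate (Step 3 of Proposition \ref{prop1}: the decomposition along $\phi_0/\|\phi_0\|_{\cH_0}$, the $\nu$-Cauchy--Schwarz argument, and Gagliardo--Nirenberg control of the cubic terms). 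Note also that your ``orthogonality against $\phi_0'$'' is not the paper's condition: since $\phi_0'=\tfrac1{\sqrt3}\mfq_0\eta_0$ \eqref{edoblack3} with nonconstant $\mfq_0$ (unlike cubic GP, where $U_0'\propto 1-U_0^2$), the spectrally natural conditions \eqref{ortogonalityBlack} are $\eta_0$-weighted, and reconciling them with the momentum expansion forces the twisted perturbation $\zf=\bar{\mfq}_cz$ --- the $\mfq_c$ device you gesture at but do not construct.

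Second, you eliminate the $L_-$ negative direction by constraining $\mathcal{P}[u]$ via the Grillakis--Shatah--Strauss slope condition, whereas the paper eliminates it purely by the orthogonality condition $\int\langle i\eta_c,\bar{\mfq}_cz\rangle_\C=0$, achieved by modulating the speed $c$ (Proposition \ref{prop2a}); coercivity then holds for $E_2$ alone (Proposition \ref{anticor1}), and momentum enters only \emph{a posteriori}: conservation of the renormalized momentum plus the expansion \eqref{1stdecomP} --- whose linear term $\int\langle i\phi_c',z\rangle_\C$ vanishes exactly by that orthogonality condition thanks to $\phi_c'=\mfq_c\eta_c$ \eqref{edoderdark} --- combined with the nonzero slope in \eqref{momentumDark2a} yields $|c(t)|\lesssim d_0(u_0,\phi_0)$. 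Your slope condition is the same ingredient, used inside the coercivity rather than after it; that route is viable in principle (it is essentially Chiron's \cite{Chiron-2013}), but as written it has two genuine gaps: (i) you cannot literally ``fix the value of $P[u]$'' --- generic nearby data have $\mathcal{P}[u_0]\neq\mathcal{P}[\phi_0]$, and the renormalized momentum \eqref{momentum2} is only defined, modulo $\pi$, on a tubular neighborhood where $|\underline{u}|$ stays away from zero (Lemma \ref{liminfusubtil}), so one must either match momentum along the dark-soliton family or, as the paper does, track $c(t)$ dynamically; and (ii) you ignore the discontinuity \eqref{darkApprox} of $c\mapsto\phi_c$ at $c=0$: the family converges to $\mp\phi_0$ as $c\to0^{\pm}$, so $\partial_c\phi_c$ and the implicit-function modulation exist only one-sidedly, on $(-\cf,0]$ or $[0,\cf)$, which forces the bilateral argument the paper carries out and breaks any construction that assumes a smooth curve of dark solitons through the black soliton.
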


\medskip
The black soliton \eqref{black5gp}, stationary by nature, belongs to a greater family of traveling waves. As far as we know,  an explicit and correct
expression of a traveling wave family of solutions for the quintic GP \eqref{5gp} was missed. In fact, we show in this work that the quintic 
GP \eqref{5gp}-\eqref{5gp-boundary} also bears explicit traveling-wave solutions.  These waves, with the form
$$u(t,x)={\color{black}\Phi_c(x-ct)},$$
are currently known as \textit{dark solitons}, a reminiscent terminology coming from nonlinear optics (see \cite{Kivshar}). 
The function ${\color{black}\Phi_c}$ satisfies the complex nonlinear ordinary differential equation
\begin{equation}\label{edodark}
{\color{black}\Phi_c'' -ic\Phi_c' + (1-|\Phi_c|^4)\Phi_c= 0.}
\end{equation}
\noindent
Indeed, for $|c|< 2$ we are able to obtain the following explicit family of dark solitons:

\be\label{darksolitoninicial}
{\color{black}\Phi_c}(\xi)=\frac{i\mu_1(c) + \mu_2(c)\tanh(\ka(c)\xi)}
{\sqrt{2}\sqrt{1+\mu(c) \tanh^2(\ka(c)\xi)}},
\ee

\noindent
with $\xi=x-ct$ and where
\be\label{kappa}
\ka\equiv\ka(c)=\frac{\sqrt{4-c^2}}{2},
\ee

\be\label{mus}\begin{aligned}
&\mu_1\equiv\mu_1(c)=\frac{3c^2-4+2\sqrt{3c^2+4}}{\sqrt{18c^2-8+(3c^2+4)^{3/2}}},\\
&\mu_2\equiv\mu_2(c)=\frac{3c\sqrt{4-c^2}}{\sqrt{18c^2-8 +(3c^2+4)^{3/2}}},
\end{aligned}\ee
\noindent
and $\mu\equiv\mu(c)$ verifying the constraint relation

\be\label{murelation}
\frac{\mu_1^2 + \mu_2^2}{2 + 2\mu}=1,
\ee
for all $|c| <2$ and which comes from \eqref{5gp-boundary}. Therefore, $\mu$ is explicitly
\be\label{mus}\begin{aligned}
&\mu\equiv\mu(c)=\frac{3c^2+20-8\sqrt{4+3c^2}}{3(-4+c^2)}.
\end{aligned}\ee

\medskip 
\noindent
Note  that 

\[
\lim_{c\rightarrow0}\mu_1=0,\quad \lim_{c\rightarrow0^{\pm}}\mu_2=\pm\frac{2}{\sqrt{3}}\quad\text{and, from \eqref{murelation},}
\quad \lim_{c\rightarrow0}\mu=-\frac{1}{3}~~\text{with}~~-\frac{1}{3}\leq\mu\leq0.
\]

\noindent
Also notice that, as a consequence of the above limits, we get

\be\label{darkApprox}
\lim\limits_{c\to 0^{\pm}}{\color{black}\Phi_c(x)=\pm\Phi_0(x)}=\pm\sqrt{2}\frac{\tanh(x)}{\sqrt{3-\tanh^2(x)}}.
\ee
\noindent

{\color{black}Finally hereafter, since $\pm\Phi_c$ are both solutions of \eqref{edodark}, it is better to consider a $c-$smooth continuation of \eqref{black5gp} in the following way:

\be\label{darksoliton}
\phi_c=
\begin{cases}
\hspace{0.25cm} \Phi_c,\qquad c\geq 0,\\
-\Phi_c,\qquad c<0.
\end{cases}\ee
}

{\color{black}Hence, 
\be\label{limitDarktoBlack}\lim_{c\rightarrow0^\pm}\phi_c=\phi_0.\ee}

\noindent

\medskip

\begin{remark}
The main ingredient in the orbital stability proof is the use of the associated family of complex dark profiles $\phi_c$ whose 
real parts are odd functions with respect to the speed $c$ and are laterally approximated to the stationary black solution when ${\color{black}c\to 0}$, as 
shown in \eqref{darkApprox}. 
\end{remark}

\begin{figure}[h!] \centering
\includegraphics[scale=.44]{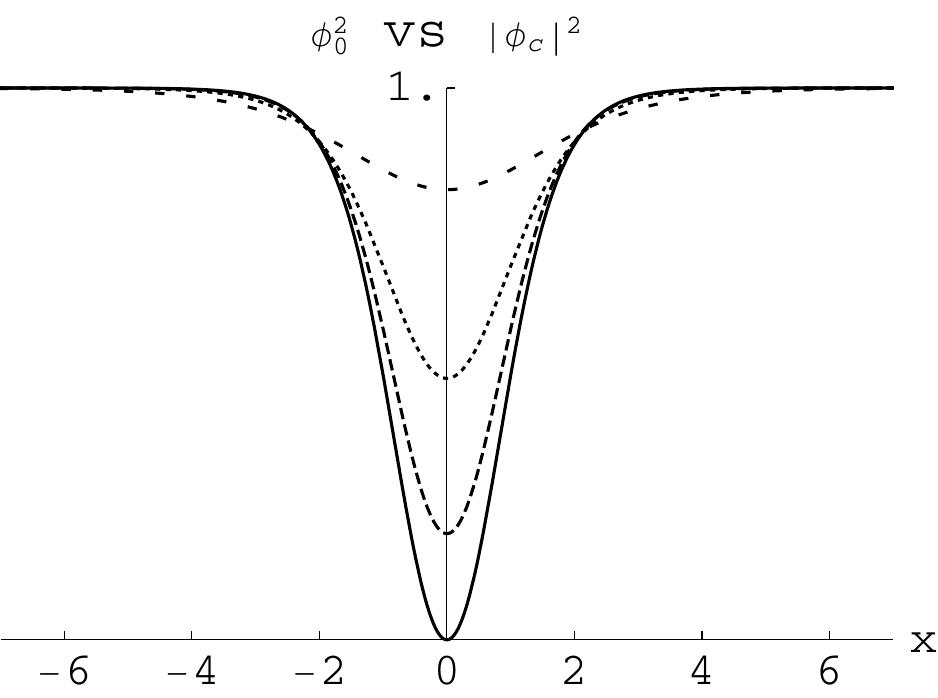} 
\includegraphics[scale=.46]{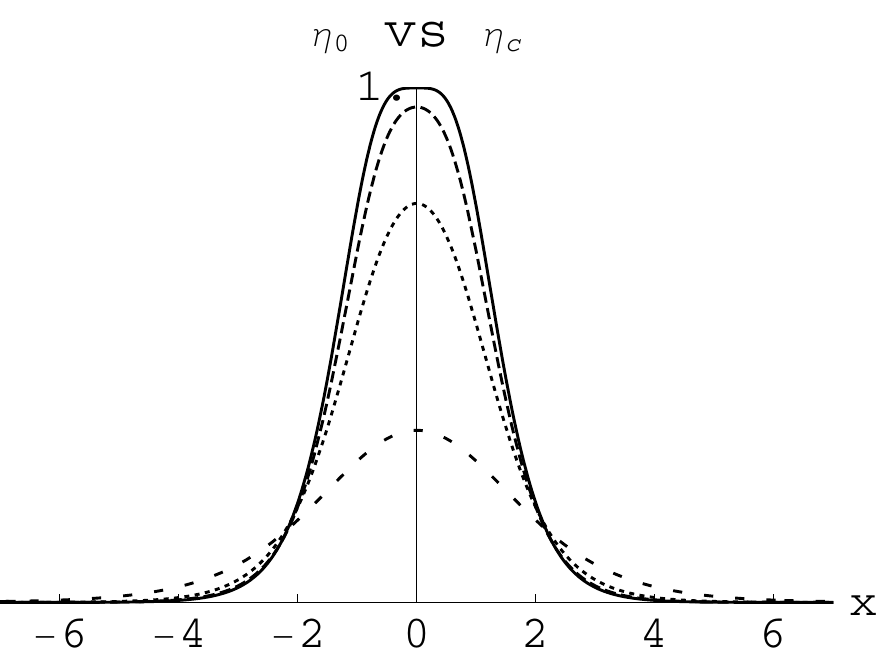} 
\caption{\label{fig1} {Left: graphics for the black soliton $\phi_0^2$ \eqref{black5gp} (full line) against several profiles of dark solitons $|\phi_c|^2$ 
\eqref{darksoliton} corresponding to dashed ($c=0.75$), dotted ($c=1.25$), and dashed but less segmented   ($c=1.75$). Right:
the nonlinear weight $\eta_0$  (full line) against several weights $\eta_c$ \eqref{peso-blackdark}, varying $c$ similarly.}}
\end{figure}

Our motivation to deal with the orbital stability 
of black solitons with this particular quintic nonlinearity {\color{black}(note that the cubic GP case was approached in \cite{G-Smets})} comes firstly from the  physical relevance that this model 
has in quantum gases as we said above. We were also motivated to prove this orbital stability result getting rid of a hydrodynamical formulation because, this approach  only describes non-vanishing solutions, and therefore excluding the black soliton solution. From a specific mathematical point of view, we avoid technical issues coming from the non integrable character of 
the model and therefore  not being allowed to use classical integrability methods. 

\medskip
\noindent

\medskip
\noindent
More specifically, our proof  establishes the coercivity of the functional $Q_c[\zf]$ \eqref{Qc}, when the function $\zf$ satisfies suitable orthogonality 
conditions well adapted to this specific quintic nonlinearity. These orthogonality conditions are guaranteed by the introduction of modulation
parameters (see Proposition \ref{prop2a}) and needed to perturb a stationary object as the black soliton of the quintic GP. This  approach has the advantage to show a better control on the perturbation with respect to the black soliton. 


\medskip
\noindent
Besides that, we are able to explicitly obtain (despite the nonintegrable nature of the model) dark solitons 
(traveling wave kinks) of the quintic GP, being one of the special cases where it is still possible to get 
these solutions (the other one is the cubic GP). The  knowledge about dark solitons of quintic 
GP allowed us to perform precise perturbations on the black soliton. 

\medskip
\noindent

Summarizing, we highlight here the main results involved in the proof of the orbital stability of the black soliton of 
the quintic GP \eqref{5gp}. Our proof introduces new theoretical and technical tools, 
with respect to the integrable cubic GP equation \cite{G-Smets} or more recently with respect to systems of cubic GP equations \cite{CPP}. 
These tools are specially suited to deal with the associated nonlinear solutions of \eqref{5gp}, namely 
the black soliton $\phi_0$ \eqref{black5gp} and the dark soliton $\phi_c$ profile \eqref{darkprofile}. Specifically we introduced

\medskip

\begin{itemize}
 \item a new family of traveling wave solutions $\phi_c$ \eqref{darksoliton},  close to the stationary black soliton $\phi_0$, 
 for  the quintic GP equation \eqref{5gp}. Obtaining non-constant solutions of this 
 non-integrable equation is not a simple task, and even more in the case when one has to solve a coupled nonlinear ODE system \eqref{edodark}. 
 Only by proposing a suitable ansatz and a careful tuning of the free parameters allowed us to  obtain them. 
 Just, compare these solutions of the quintic GP equation with the corresponding ones of the cubic GP equation, where a simple complex constant translation gives 
 the traveling family. See Section \ref{derivationdark}  for further reading.
 
\item a modified metric $d_c$.  This is a weighted metric with  nonlinear weight $\phi_c^3$ as it is dictated from the coercivity estimates that we need to prove on the 
{\color{black}quintic} Ginzburg-Landau energy on black and dark solitons. See  \eqref{dc} for a precise definition of $d_c$ and also  Propositions \ref{prop1} - \ref{anticor1}.

\item new functional spaces, in order to correctly measure the distance between black and dark solitons and their perturbations $\zf$. 
See Section \ref{Preliminaries} for details.

\item new orthogonality conditions associated to perturbations of the black and dark solitons 
\eqref{ortogonalityBlack} and \eqref{genortogonalityDark} and specially adapted to the spectral properties of the quintic GP equation. 


\end{itemize}

\medskip
In this work, we  were able to overcome several technical issues coming from the nonlinear functional structure of the quintic GP and its 
black and dark solitons, by working in a small speed region  {\color{black}$|c|<\cf$}. 
Moreover, the apparent structural difference between 
black solitons in the cubic GP and the quintic GP, is reflected in many identities and 
related functions around these black (and dark) solitons, e.g. the {\color{black}quintic} Ginzburg-Landau energy $E_2$ \eqref{E2} or the spatial derivative $\phi_0'$. 

\medskip
The strategy we used for the proof of the orbital stability result 
for the black soliton $\phi_0$ of \eqref{5gp} was focused to first show  that the {\color{black}quintic} Ginzburg-Landau energy 
$E_2$ is coercive around the black and dark solitons. This was done by using some orthogonality 
relations based on perturbations $\zf$ of the black and dark solitons and arising from the particular spectral problem related to \eqref{5gp}, suitable nonlinear identities 
and some proper Gagliardo-Nirenberg estimates on functions of the black and dark solitons of \eqref{5gp}. 

\medskip
We notice that the orthogonality conditions arising from the coercivity result (Proposition \ref{prop1}) on the black soliton $\phi_0$, 
do not include a linear term  appearing after expansion of $E_2$ \eqref{E2}  around the black and dark solitons, and therefore we must 
deal with this remaining linear term along the proof, estimating it in a suitable way to obtain the expected bounds, in contrast with previous approaches (\cite{G-Smets})  where their natural orthogonality conditions imposed its cancellation. 

\medskip
After that main step, we continued with	 Proposition \ref{prop2a}, proving, through a modulation of parameters,
the existence of suitable perturbations $\zf$ of the dark soliton which satisfy the orthogonality conditions 
defined in \eqref{genortogonalityDark}. 


\medskip 
Finally note that, related with the orbital stability, is the concept of asymptotic stability which essentially states the convergence of 
perturbations of the black soliton to a special element in the tubular neighborhood generated by its symmetries, e.g. phase and translation invariances. 
A detailed study on the asymptotic stability of the black soliton \eqref{black5gp} of the quintic GP \eqref{5gp} 
is currently being made and it will appear elsewhere.

\medskip
\subsection{ Final remarks}

\begin{itemize}
 
	\item {\color{black}  Our work does not get an orbital stability result for dark solitons \eqref{darksoliton} in $d_c$ metric \eqref{dc} for speeds close to $0$. However, this kind of stability for dark profiles can be obtained in an alternative metric as the used in \cite[Theorem 1.1]{Lin}. In fact, computing directly
	
	\[
	 P_1[\phi_c]=\frac{\mu_1\mu_2}{\sqrt{\mu}}\arctan(\sqrt{\mu})-2\arctan\left(\frac{\mu_2}{\mu_1}\right),
	\]
\noindent
we get 
	
	\[
	 \frac{dP_1}{dc}[\phi_c]<0,
	\]
	\noindent
	as it can also be seen in Figure \ref{figP1-a}
	
	\begin{figure}[h!] \centering
	\includegraphics[scale=.40]{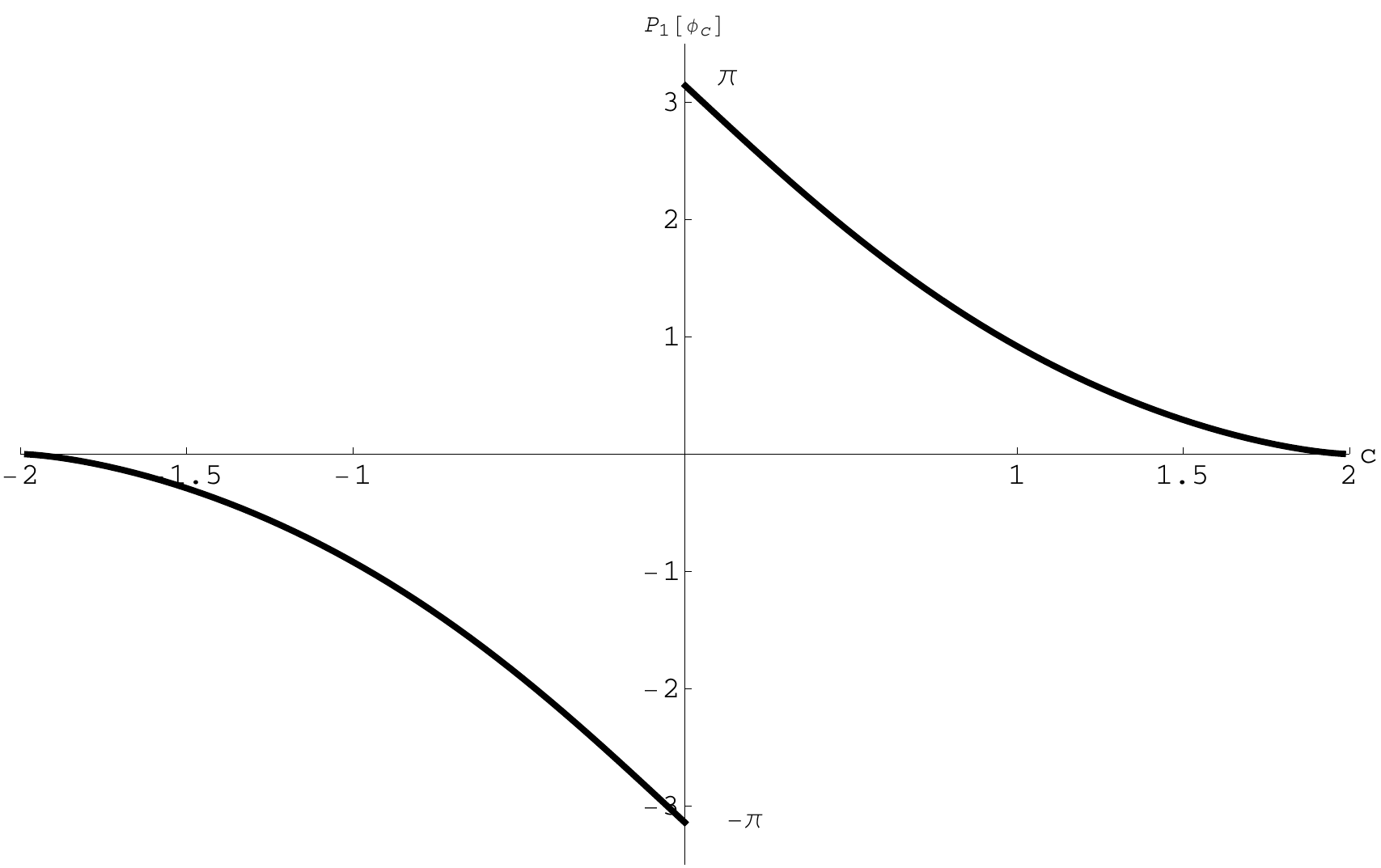} 
	\caption{\label{figP1-a} Momentum $P_1$ \eqref{P1} at $\phi_c$ \eqref{darksoliton}.}
\end{figure}

	\item Remember that \eqref{5gp} is phase invariant, and therefore since $e^{i\tau_0}\phi_c \xrightarrow[c\to 0]{}  e^{i\tau_0}\phi_0,~~\tau_0\in(0,2\pi)$,  we also have orbital stability for this phase transformed family of black solitons.
	}
	
	\item The quintic NLS: \[iv_t + v_{xx} - |v|^4v=0.\]
	\noindent
	The application to this model is rather direct, because it only involves the introduction of a rotation in time transformation 
	$u=e^{it}v$ to connect \eqref{5gp} with  the quintic NLS.
	\item {\color{black}Note that some recent works (see \cite{PeliPlum1,PeliPlum2}) have approached another NLS model with modified dispersion terms, and dealing with orbital stability of black solitons using dark solitons with small speed, close to $0$, but without an explicit expression of them and resorting to symetries to simplify the coercivity analysis.}

\end{itemize}

\noindent

\subsection{Structure of the paper}
In Section \ref{Sec1} we introduce the problem and the main result. In Section \ref{Sec2} we
obtain the black and dark solitons of \eqref{5gp} and describe some properties and nonlinear identities and norms based on them. 
In Section \ref{Sec3} we present the coercivity properties of the {\color{black}quintic} Ginzburg-Landau energy 
$E_2$ around black and dark solitons. In Section \ref{Sec4} we study the existence and time growth of some modulation parameters associated to black and dark
solitons. Finally in  Section \ref{Sec5} we prove the main Theorem on the orbital stability of the black soliton of \eqref{5gp}, gathering the  results 
obtained in the previous sections.

\section*{Acknowledgments}

We  thank here to Henrik Kalisch and Didier Pilod for some valid discussions and observations along this work. We also thank to the referees for their comments and suggestions that actually improved a previous version.

\section{Derivation of black and dark solitons for the quintic GP}\label{Sec2}
In this section  we explain the derivation of the black and dark solutions given in \eqref{black5gp} and \eqref{darksoliton}. 
The following basic result will be useful for obtaining the black family \eqref{black5gp}.

\medskip 
\begin{lemma}\label{lemma-elemental-calculus}
Let $b>0$. Then,  
\[
\int_0^y\frac{ds}{(b-s^2)\sqrt{s^2+2b}}=\frac{1}{2b\sqrt{3}}\ln \left( \frac{\sqrt{2b +y^2} + \sqrt{3}y}{\sqrt{2b +y^2} - \sqrt{3}y} \right),
\]
for all $|y| < \sqrt{b}$.
\end{lemma}
\noindent
See Appendix \ref{0a} for a proof of this identity. 

\subsection{Derivation of black solitons}\label{derivationblack}
Using \eqref{edoblack1}, we get after multiplication by $\phi'$
$$\phi(x)\phi'(x) + \phi''(x) \phi'(x) = \phi^5(x) \phi'(x),$$
and then 
$$\frac{d}{dx}\left[\phi(x)^2 + (\phi'(x))^2-\frac{\phi^6(x)}{3} \right] = 0,$$
which yields 
$$\phi(x)^2 + (\phi'(x))^2-\frac{\phi^6(x)}{3}  = K_0. $$

\medskip 
\noindent 
From the boundary conditions at infinity in \eqref{5gp-boundary} we conclude that $K_0=\frac{2}{3}$ and we obtain the following first order ODE
\begin{equation}\label{edoblack2-a}
(\phi')^2 = \frac{1}{3}\phi^6 -\phi^2 + \frac{2}{3} = \frac{1}{3}(1-\phi^2)^2 (2 + \phi^2).
\end{equation}

\medskip
\noindent
Assuming that $\phi'>0$ and integrating, we get

$$\int_{x_0}^{x}\frac{\phi'(\tilde{x})\, d\tilde{x}}{\sqrt{(1 -\phi(\tilde{x})^2)^2 (\phi(\tilde{x})^2 + 2)}}= \frac{x-x_0}{\sqrt{3}},$$

\medskip 
\noindent
where we consider $x_0=\phi^{-1}(0)$. Then, making the change $s=\phi(\tilde{x})$ we have
$$\int_{0}^{\phi(x)}\frac{ds}{\sqrt{(1 -s^2)^2 (s^2 + 2)}}= \frac{x-x_0}{\sqrt{3}}.$$

\medskip 
\noindent
Without loss of generality we can assume $x_0=0$. Since $|\phi(x)|<1$, using Lemma \ref{lemma-elemental-calculus} it follows that
$$\frac{1}{2 \sqrt{3}}\ln \left( \frac{\sqrt{2 + \phi^2(x)} + \sqrt{3}\phi(x)}{\sqrt{2 + \phi^2(x)} - \sqrt{3}\phi(x)}\right)= \frac{x}{\sqrt{3}},$$
which yields
$$\frac{\phi(x)}{\sqrt{2 + \phi^2(x)}}= \frac{1}{\sqrt{3}} \frac{e^{2x}-1}{e^{2x}+1}=\frac{1}{\sqrt{3}}\tanh (x),$$
and consequently
$$\phi(x)=\sqrt{2}\,\frac{\tanh(x)}{\sqrt{3-\tanh^2(x)}},$$
which, {\color{black}in fact is the unique (up to symmetries of the equation) non-trivial stationary solution of the quintic GP} \eqref{5gp}-\eqref{5gp-boundary} and named as \emph{black} soliton.  

\medskip
An important observation  is that  the black soliton $\phi_0$ \eqref{black5gp} has a definite variational structure. More precisely, 
considering the {\color{black}quintic} Ginzburg-Landau energy $E_2[u]$ defined in \eqref{E2}  as the corresponding Lyapunov functional, and 
considering a small perturbation $\zf$ of the black soliton $\phi_0$, namely a $\zf\in \mathcal{H}_0(\R)$ 
with $\mathcal{H}_0(\R)\subset H^1_{loc}(\R)$ to be defined in
\eqref{Hc}, we get, after a power expansion in  $\zf$ of $E_2$ \eqref{E2} 

\[
E_2[\phi_0 + \zf] = E_2[\phi_0] - 2\text{Re}\Big[\int_\R\bar{\zf}(\phi_0'' + (1-|\phi_0|^4)\phi_0)\Big] 
+ \mathcal{O}(\zf^2).
\]
\noindent
Because the first variation of $E_2$ vanishes for \eqref{edoblack1},  the black soliton is characterized as critical point  of the
functional $E_2$ associated to the quintic GP \eqref{5gp}. In fact, it is easy to see that
\be\label{blackenergy}
E_2[\phi_0]:=2\sqrt{3}\arctanh\left(\frac{1}{\sqrt{3}}\right).
\ee

Moreover, it is possible to state the following minimality's characterization on the black soliton solution 

\begin{proposition}[{\cite[Lemma 2.6]{BGSS}}]\label{minimalidadBlack}
Let $E_2$ \eqref{E2} and let $\phi_0$ be the black soliton solution \eqref{black5gp}. Then we have
\[
 E_2[\phi_0]=\inf\Big\{E_2[\phi]:\phi\in H^1_{loc}(\R),~\inf_{x\in\R}|\phi(x)|=0\Big\}.
\]
\noindent
Moreover, if $E_2[\phi]<E_2[\phi_0],$ \, then \,$ \inf_{x\in\R}|\phi(x)|>0.$
\end{proposition}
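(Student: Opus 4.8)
The plan is to establish a Bogomol'nyi-type lower bound for $E_2$ that is saturated precisely by the black soliton, and then to read off both assertions from it. Throughout I write $W(s):=\tfrac13(1-s^2)^2(2+s^2)$ for the potential, so that the integrand of $E_2$ in \eqref{E2} is $|u_x|^2+W(|u|)$ and the black soliton ODE \eqref{edoblack2-a} reads $(\phi_0')^2=W(\phi_0)$. The first reduction is to a real, nonnegative competitor: if $E_2[\phi]=+\infty$ there is nothing to prove, so I assume $E_2[\phi]<\infty$. Setting $\rho:=|\phi|$, the diamagnetic inequality $|(|\phi|)_x|\le|\phi_x|$ (valid a.e. for $\phi\in H^1_{loc}$) gives $\rho\in H^1_{loc}(\R)$ with $\rho_x\in L^2$ and
\[
E_2[\phi]\ \geq\ \int_\R\big(\rho_x^2+W(\rho)\big)\,dx\ =:\ \cE[\rho],
\]
while $\inf_x\rho(x)=\inf_x|\phi(x)|=0$. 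Thus it suffices to bound $\cE[\rho]$ from below for nonnegative $\rho$.

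The heart of the argument is a pointwise completion of squares. Define the increasing function $G(t):=\int_0^t\sqrt{W(s)}\,ds$. Since $a^2+b^2\ge 2|ab|$,
\[
\rho_x^2+W(\rho)\ \geq\ 2\sqrt{W(\rho)}\,|\rho_x|\ =\ 2\,\big|(G\circ\rho)_x\big|\quad\text{a.e.},
\]
so $\cE[\rho]\ge 2\int_\R|(G\circ\rho)_x|\,dx$, i.e. twice the total variation of $G\circ\rho$. To bound this below I use the far-field behaviour. Because $E_2[\phi]<\infty$ and $2+\rho^2\ge 2$ force $(1-\rho^2)^2\in L^1(\R)$, there exist sequences $z_n\to-\infty$ and $y_n\to+\infty$ along which $\rho^2\to1$, hence (as $\rho\ge0$) $\rho(z_n),\rho(y_n)\to1$. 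Fixing a point $w$ with $\rho(w)$ small and choosing $z_n<w<y_n$, the variation of $G\circ\rho$ on $[z_n,w]$ and on $[w,y_n]$ is at least $G(\rho(z_n))-G(\rho(w))$ and $G(\rho(y_n))-G(\rho(w))$ respectively; letting $n\to\infty$ and then $\rho(w)\to0$ (possible since $\inf_x\rho=0$ and $G(0)=0$) yields $\int_\R|(G\circ\rho)_x|\,dx\ge 2G(1)$, whence $\cE[\rho]\ge 4G(1)$.

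Finally I would identify $4G(1)$ with $E_2[\phi_0]$. Since $\phi_0$ increases monotonically from $-1$ to $1$ with $\phi_0'=\sqrt{W(\phi_0)}$, the change of variables $s=\phi_0(x)$ gives
\[
E_2[\phi_0]=2\int_\R\phi_0'\sqrt{W(\phi_0)}\,dx=2\int_{-1}^{1}\sqrt{W(s)}\,ds=4G(1),
\]
using that $\sqrt{W}$ is even; by \eqref{blackenergy} this common value is $2\sqrt3\,\arctanh(1/\sqrt3)$. As $\phi_0$ itself satisfies $\inf_x|\phi_0|=0$, the bound $E_2[\phi]\ge E_2[\phi_0]$ is attained, which proves the infimum characterization. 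The \emph{moreover} statement is then immediate by contraposition: if $\inf_x|\phi(x)|=0$ then $E_2[\phi]\ge E_2[\phi_0]$, so $E_2[\phi]<E_2[\phi_0]$ forces $\inf_x|\phi(x)|>0$.

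I expect the only delicate point to be the far-field step, namely justifying that finite energy produces sequences on which $\rho\to1$ at $\pm\infty$ and arranging them around a near-zero of $\rho$, so that one never needs genuine pointwise limits (which would require more regularity). Everything else is the standard Bogomol'nyi reduction, and the reduction to the modulus via the diamagnetic inequality is what makes the completion of squares applicable to complex-valued $\phi$.
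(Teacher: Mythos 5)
Your proof is correct, and it is worth noting how it relates to what the paper actually does: the paper does not prove this proposition at all, but simply cites \cite[Lemma 2.6]{BGSS} (the analogous statement for the cubic GP) and asserts that the extension to the quintic energy $E_2$ is direct. Your blind argument reconstructs, for the quintic potential $W(s)=\tfrac13(1-s^2)^2(2+s^2)$, precisely the mechanism behind that lemma: reduction to the modulus $\rho=|\phi|$ via the diamagnetic inequality (legitimate, since the potential term depends only on $|u|$), the Bogomol'nyi completion of squares $\rho_x^2+W(\rho)\ge 2\,|(G\circ\rho)_x|$ with $G(t)=\int_0^t\sqrt{W(s)}\,ds$, and the conversion of finite energy into far-field information: $2+\rho^2\ge 2$ gives $(1-\rho^2)^2\in L^1(\R)$, hence sequences $z_n\to-\infty$, $y_n\to+\infty$ with $\rho(z_n),\rho(y_n)\to1$, which combined with a point $w$ where $\rho(w)$ is arbitrarily small (available since $\inf_x\rho=0$) yields total variation of $G\circ\rho$ at least $2G(1)$ and thus $E_2[\phi]\ge 4G(1)$. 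The identification $E_2[\phi_0]=2\int_\R(\phi_0')^2\,dx=2\int_{-1}^{1}\sqrt{W(s)}\,ds=4G(1)$ follows correctly from the first-order ODE \eqref{edoblack2-a}, is consistent with \eqref{blackenergy}, and since $\phi_0(0)=0$ the infimum is attained; the \emph{moreover} clause is then the contrapositive, exactly as you say. The delicate points are all handled: in one dimension $H^1_{loc}$ functions have (locally absolutely) continuous representatives, so $\inf_x|\phi(x)|$ is meaningful and the a.e.\ chain rule for $G\circ\rho$ is valid, and your sequence-based far-field step correctly avoids assuming pointwise limits at infinity. In short, your proof supplies in full the details the paper delegates to \cite{BGSS}, and in doing so substantiates the paper's claim that the passage from the cubic to the quintic case requires no new ideas.
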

\begin{proof}
This result is essentially contained in \cite[Lemma 2.6]{BGSS}, where the black soliton case for the cubic GP was considered. 
The extension to the quintic GP case, once we work with the energy $E_2$ is direct and does not require additional steps. We therefore skip the details.
\end{proof}

\subsection{Derivation of dark solitons}\label{derivationdark}

  Once obtained the black soliton \eqref{black5gp}, the detailed construction of the  dark soliton solution \eqref{darksoliton} to \eqref{edodark} is presented in the Appendix \ref{AppenDarkSol}. A sketch of the derivation is the following: bearing in mind that  \eqref{edodark} reduces to \eqref{edoblack1} at $c=0$, we proposed a suitable \emph{ansatz} like 

\be\label{candidate}
{\color{black}\Phi_c}(x)=\frac{ia_1 + a_2\tanh(k x)}{\sqrt{1+ a_3\tanh^2(k x)}},
\ee

\noindent
with $a_1,~a_2,~a_3$ and $k$ as free parameters to be determined in order that \eqref{candidate} is actually a solution of \eqref{edodark}, and verifying 
the asymptotic behavior

\[
 \lim_{x\rightarrow\pm\infty}|{\color{black}\Phi_c}(x)|^2=1.
\]
\noindent
Hence, substituting \eqref{candidate} into \eqref{edodark} and after lengthy manipulations, we got \eqref{darksoliton}, 
with $\mu_1=\sqrt{2}a_1,~\mu_2=\sqrt{2}a_2$ and $a_3=\mu$, satisfying the relation \eqref{murelation} and $k=\ka$ as in \eqref{kappa}.

\medskip 
\noindent 
Finally, we introduce the notion of \emph{dark profile}.

\begin{definition}[Dark profile]
	Let $c\in(-2,2),$ and $x_0\in\R$ be fixed parameters. We define the complex-valued dark profile $\phi_c$ with speed ${\color{black}c\neq0}$ as follows
	
	\be\label{darkprofile}
	\phi_c(x):=\phi_c(x;c,x_0) = {\color{black}\sgn(c)}\frac{i\mu_1(c) + \mu_2(c)\tanh(\ka(c)(x+x_0))}{\sqrt{2}\sqrt{1+\mu(c) \tanh^2(\ka(c)(x+x_0))}}.
	\ee
\end{definition}

\begin{remark}
	Note that the profile $\phi_c$ is the standard profile associated to the dark soliton solution \eqref{darksoliton}. Note moreover, that 
	although $\phi_c$ is not an exact solution of \eqref{5gp}, it can be interpreted as follows: for each $(t,x)\in\R^2,$
	$$(t,x)\longmapsto\phi_c(x;c,x_0-ct),$$
	\noindent
	is an exact dark soliton solution of \eqref{5gp} moving with speed $c$. 
\end{remark}

\medskip 
\subsection{Preliminaries}\label{Preliminaries}
First of all,  we introduce the following notation for the nonlinear weights

\be\label{peso-blackdark}
\eta_0(x)=1-\phi_0^4(x)\quad\text{and}\quad\eta_c(x)=1-|\phi_c(x)|^4,
\ee

\medskip
\noindent 
and for the real and imaginary parts of the dark soliton
\begin{align}
&R_c(x)= \text{Re}\, \phi_c(x)= \frac{\mu_2\tanh(\ka x)}{\sqrt{2}\sqrt{1+\mu \tanh^2(\ka x)}},\label{real-dark}\\
&I_c(x)= \text{Im}\, \phi_c(x)= \frac{\mu_1}{\sqrt{2}\sqrt{1+\mu \tanh^2(\ka x)}}.\label{imaginary-dark}
\end{align}

\medskip 
To simplify the notation, we shall also denote 

\be\label{complexproduct}
\langle f,\, g\rangle_\C = \text{Re}(f\bar{g}).
\ee

\medskip
Moreover, we define the following functional spaces: given $c\in(-2,2)$ we consider the weighted Sobolev space
 
\be\label{Hc}
\mathcal{H}_c(\R):=\left\{f\in C^0(\R,\C): f'\in L^2(\R)~~\text{and}~~\eta_c^{1/2}f\in L^2(\R)\right\},
\ee
\noindent
with the norm 

\be\label{normHc}
\|f\|_{\mathcal{H}_c}:= \Big(\int_\R |f'|^2 + \eta_c|f|^2\Big)^{1/2}.
\ee

\medskip
We will also use $\mathcal{H}^{real}_c(\R)$ to denote the set of real-valued functions in $\mathcal{H}_c(\R)$, that is,
 
\be\label{Hc-a}
\mathcal{H}_c^{real}(\R)=\left\{f\in C^0(\R,\R): f'\in L^2(\R)~~\text{and}~~\eta_c^{1/2}f\in L^2(\R)\right\}.
\ee

\medskip
\noindent
Using the exponential decay of $\eta_c$ we can check that the space $\mathcal{H}_c$ does not depend on the 
velocity $c$ when $|c|\le \cf$,  for some $\cf$ small enough. Even more, the norms $\|\cdot \|_{\mathcal{H}_c}$ are equivalent with 
$\|\cdot \|_{\mathcal{H}_0}$. For further details see Lemma \ref{lemma-integral-black-dark-normH0}.
Therefore, hereafter we simplify the notation using the identification

\be\label{identification-spaces}
\mathcal{H}:=\mathcal{H}_c\quad \text{and} \quad \mathcal{H}^{real}:=\mathcal{H}_c^{real},
\ee

\medskip
\noindent
for all $|c| < \cf$. Beside that, we define a proper subset of $\mathcal{Z}(\R)\subseteq \mathcal{H}(\R)$, namely

\be\label{Ec}
\mathcal{Z}(\R):=\left\{u\in \mathcal{H}(\R): 1-|u|^4 \in L^2(\R)\right\},
\ee

\medskip
\noindent
which has metric structure with the distance. 
\be\label{dc}
d_c(u_1,u_2):= \Big(\|u_1-u_2\|^2_{\mathcal{H}_c} + \|\phi_c^3(|u_1|^2-|u_2|^2)\|^2_{L^2}\Big)^{1/2},
\ee

\medskip
\noindent
for all $|c| < \cf$. Also notice that if $u \in \mathcal{Z}(\R)$, from the computations in \eqref{sigma-well-defined} we see that, 
the energy $E_2$ \eqref{E2} is well defined for elements in $\mathcal{Z}(\R)$.

\begin{remark}
Similarly to the theory developed in \cite{G-Smets} in the context of the cubic GP model, 
here we also have that the unique global solution $u$ of \eqref{5gp}  with initial data $u_0\in\mathcal{Z}(\R)$ 
remains continuous from $\R$ to $\mathcal{Z}(\R)$ endowed with the metric structure induced by $d_c$ \eqref{dc}. 
\end{remark}

\medskip
\subsection{Nonlinear identities and estimates for black and dark solitons} 
Now we present some nonlinear identities related to the black soliton and dark soliton profile \eqref{black5gp} and \eqref{darkprofile}, which shall be useful along the work.   Firstly we note that from \eqref{edoblack2-a} we get

\begin{equation}\label{edoblack3}
\phi_0'(x) =  \tfrac{1}{\sqrt{3}}(1-\phi_0^2(x))\sqrt{2 + \phi_0^2(x)}.
\end{equation}

\medskip
\noindent
In comparison, the dark soliton satisfies the following identity:

\[
\phi_c'(x) = \frac{1}{\sqrt{2}}\frac{\ka \sech^2 (\ka x)}{(1+\mu\tanh^2(\ka x))^{3/2}}\Big( \mu_2-i\mu\mu_1\tanh(\ka x)\Big),
\]

\noindent
and which shows the localized character of $\phi_c'$.

\medskip

Notice that from \eqref{edoblack1}, \eqref{edoblack2-a} and \eqref{blackenergy}, the  black soliton solution \eqref{black5gp} satisfies the  identity 
\[
\begin{split}
\|\phi_0\|^2_{\mathcal{H}_0}=\int_\R\Big( \eta_0\phi_0^2  + (\phi_0')^2\Big)dx&=\int_\R \Big(-\phi_0\phi_0'' + \tfrac{1}{3}(1-|\phi_0|^2)^2(2+|\phi_0|^2)\Big)dx\\
&=E_2[\phi_0]=2\sqrt{3}\arctanh{\left(\frac{1}{\sqrt{3}}\right)}, 
\end{split}
\]

\medskip 
\noindent 
and by direct calculation we have  also $\|\eta_0\|_{L^2(\R)}^2=2\sqrt{3}\arctanh{\left(\frac{1}{\sqrt{3}}\right)}.$ Hence, 
\begin{equation}\label{H0L2}
\|\phi_0\|_{\mathcal{H}_0(\R)}^2 = \|\eta_0\|_{L^2(\R)}^2= E_2[\phi_0]= 2\sqrt{3}\arctanh{\left(\frac{1}{\sqrt{3}}\right)}.
\end{equation}

\medskip
Coming back to \eqref{darksoliton} and using \eqref{mus} and \eqref{murelation},
we get that the explicit {\color{black}quintic} Ginzburg-Landau energy \eqref{E2} of the \emph{dark} soliton \eqref{darksoliton} is 

\be\label{darkenergy}
\begin{aligned}
&E_2[\phi_c]:= \frac{s_1 + s_2\arctanh(\sqrt{|\mu|})}{32\ka\mu^2},
\end{aligned}
\ee

\medskip
\noindent
with
\be\label{m1m2}
\begin{aligned}
&s_1:=\left(2 \mu (\mu+3)-\mu_2^2 (\mu-1)\right) \left(\mu_2^4-4 \mu_2^2 \mu+4 \mu \left(\mu+\ka ^2\right)\right),\\
&s_2:=\frac{12 \mu \ka ^2 \left(\mu_1^2 (\mu-3) \mu+\mu_2^2 (3 \mu-1)\right)-\left(\mu_1^2-2\right)^2 
\left(\mu_2^2 \left(3 \mu^2-2 \mu+3\right)-2 \mu \left(3 \mu^2+10 \mu-9\right)\right)}{3\sqrt{|\mu|}}.
\end{aligned}
\ee

\medskip
\noindent
On the other hand, we get the right convergence of  \eqref{darkenergy} to \eqref{H0L2} when the speed $c$ goes to $0$, namely (see Figure \ref{fig1-a})

\be\label{darkenergyApprox}
\lim_{c\to0}E_2[\phi_c]= E_2[\phi_0]=2\sqrt{3}\arctanh\left(\frac{1}{\sqrt{3}}\right).
\ee

\begin{figure}[h!] \centering
	\includegraphics[scale=.52]{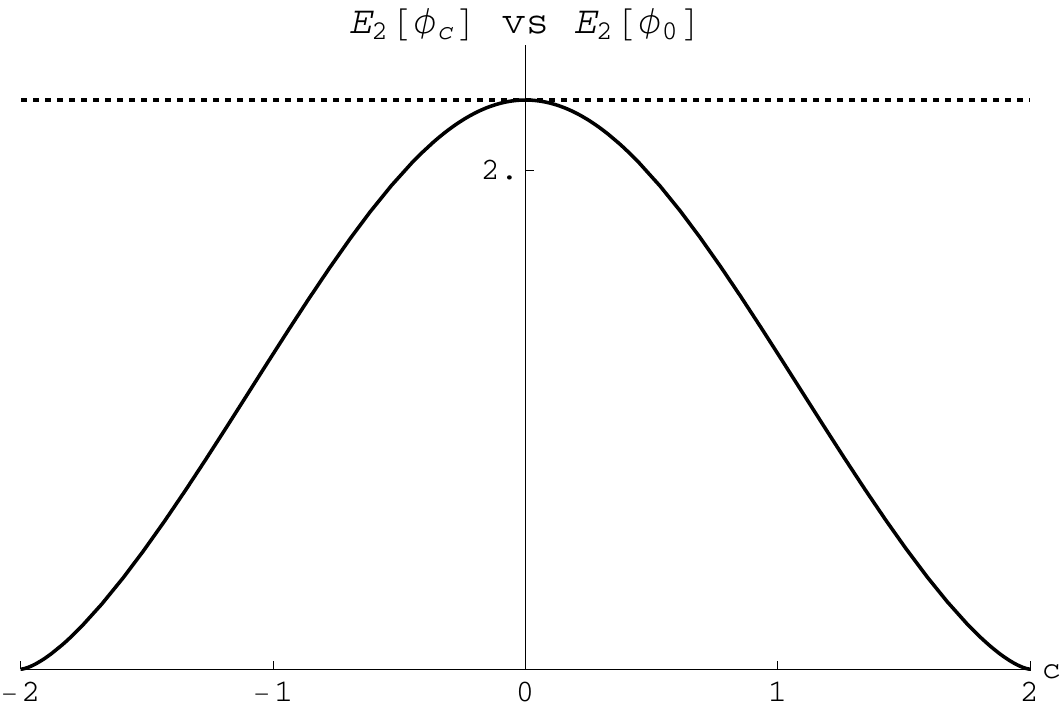} 
	\caption{\label{fig1-a} Comparison between the {\color{black}quintic} Ginzburg-Landau energy \eqref{E2} of the dark (full line) \eqref{darkenergy} and black (dotted) \eqref{blackenergy} soliton solutions of  \eqref{5gp}.}
\end{figure}

\noindent
In fact, the expansion of \eqref{darkenergy} around $\phi_0$ up to $c^2$ order is 

\be\label{darkenergyOrder2}
E_2[\phi_c] = E_2[\phi_0] - \frac{1}{4}\Big(3 + E_2[\phi_0]\Big)c^2  + \mathcal{O}(c^3),
\ee

\noindent
and therefore, for $c$ small enough one gets

\be\label{darkenergyApprox2}
E_2[\phi_c] - E_2[\phi_0]\geq -2\sqrt{3}c^2.
\ee

\medskip

For the sake of completeness, we also show here the following related amounts:

\be\label{d0}
d_0^2(\phi_0,\phi_c):= \|\phi_0 - \phi_c\|^2_{\mathcal{H}_0} + \|\phi_0^3(|\phi_c|^2-\phi_0^2)\|^2_{L^2}.
\ee

\medskip 
\noindent
We now have that if $|c|<\cf,$\;  with some\; $\cf\ll1,$ 

\be\label{difPhis}
\|\phi_0 - \phi_c\|^2_{\mathcal{H}_0}=\mathcal{O}(c^2)
\ee

\medskip
\noindent
and
\[
\|\phi_0^3(|\phi_c|^2-\phi_0^2)\|^2_{L^2}=\mathcal{O}(c^4).
\]

\medskip
\noindent
Therefore, we get that

\be\label{d01}
d_0^2(\phi_0,\phi_c)= \mathcal{O}(c^2). 
\ee

\medskip
\noindent
In the following lines, we show some interesting computations, which are justified in Appendix 
\ref{0}. Firstly we have that

\be\label{L2-ddark-raizetac}
\Bigl\|\frac{\phi_c'}{\sqrt{\eta_c}}\Bigr\|_{L^2}^2\leq \frac{\pi }{3 \sqrt{3}}+\frac{2}{\sqrt{3}}\arccotanh\left(\sqrt{3}\right),
\ee
\medskip
\noindent
for all $|c| < 2$.  On the other hand, because $-\frac{1}{3}\leq \mu<0$ for all $|c| < 2$, we have  

\be\label{maxCoercivity-a}
\big \|I_c\big\|_{L^\infty}=\frac{\mu_1}{\sqrt{2+2 \mu}}=\mathcal{O}(c),
\ee

\medskip
\noindent
with $I_c$ defined in \eqref{imaginary-dark}. We  also have the next useful estimates:

\medskip
\be\label{new-maxCoercivity-a}
 \Big\|\frac{|\phi_c|^2 - \phi_0^2}{\sqrt{\eta_0}}\Big\|_{L^2}=\mathcal{O}(c^2),  
\ee

\be\label{new-maxCoercivity-b}
 \Big\|\frac{\phi_0\eta_0-R_c\eta_c}{\sqrt{\eta_0}}\Big\|_{L^2}=\mathcal{O}(c^2),
\ee

\be\label{new-maxCoercivity-aL2-1}
  \Big\|\frac{\eta_c|\phi_c|^2 - \eta_0\phi_0^2}{\sqrt{\eta_0}}\Big\|_{L^2}=\mathcal{O}(c^2),
\ee

\medskip 
\noindent
and 

\be\label{new-maxCoercivity-aL2-2}
  \Big\|\frac{\eta_c|\phi_c|^2R_c^2 - \eta_0\phi_0^4}{\sqrt{\eta_0}}\Big\|_{L^2}=\mathcal{O}(c^2),  
\ee
\noindent
\medskip
\be\label{new-maxCoercivity-a00}
 \Big\|\frac{|\phi_c|^2 - \phi_0^2}{(1+x^2)\eta_c}\Big\|^2_{L^\infty}=\mathcal{O}(c^2)
\ee

\medskip 
\noindent 
for all $|c|\le \cf$ with some $\cf\ll1$. Also we have the uniform pointwise estimate

\be\label{black-dark-comparison}
|\phi_0(x)| \lesssim |\phi_c(x)|,  \quad \forall\, x\in \R,\, |c|\le \cf \,, ~ \cf\ll1.
\ee
\noindent

For more details on these $L^2$ and $L^\infty$-norms, see subsections $C.1$ and $C.2$ in Appendix \ref{0L2} and \ref{0Linfty} respectively.

\medskip
The next estimate will be useful in subsequent technical results on perturbations of the black soliton $\phi_0$, 
and therefore  we present a brief proof of it.

\begin{lemma}[Equivalent norms]\label{lemma-integral-black-dark-normH0}
Let $\phi_0$ and $\phi_c$ be the black soliton and dark soliton profile \eqref{black5gp} and \eqref{darkprofile} respectively. Then, there exists $\cf \in (0, 2)$ such that 
\begin{equation}\label{estimate-integral-black-dark-normH0}
\int_\R\big||\phi_c|^4-\phi_0^4\big||\zf|^2dx \lesssim c^2\|\zf\|^2_{\cH_{c^*}},
\end{equation}

\noindent 
for all $|c|, |c^*|< \cf$  and any $\zf\in \cH_{c^*}$. Furthermore, we conclude that $\cH_c\equiv \cH_0$ for all $|c| < \cf$ and there exist positive constants $\sigma_1$ and $\sigma_2$ such that 

\begin{equation}\label{equivalence-norm-a}
\sigma_1\|\zf\|_{\mathcal{H}_0}^2 \le \|\zf\|_{\mathcal{H}_c}^2 \le \sigma_2\|\zf\|_{\mathcal{H}_0}^2.
\end{equation}
\end{lemma}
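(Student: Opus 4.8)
The plan is to prove the single integral bound \eqref{estimate-integral-black-dark-normH0} first, and then read off the space identification $\cH_c\equiv\cH_0$ and the equivalence \eqref{equivalence-norm-a} as a soft consequence. Indeed, from the definitions \eqref{peso-blackdark} and \eqref{normHc} one has the pointwise identity $\eta_c-\eta_0=\phi_0^4-|\phi_c|^4$, whence
\[
\big|\,\|\zf\|_{\cH_c}^2-\|\zf\|_{\cH_0}^2\,\big|=\Big|\int_\R\big(\phi_0^4-|\phi_c|^4\big)|\zf|^2\Big|\le\int_\R\big||\phi_c|^4-\phi_0^4\big||\zf|^2 .
\]
Once \eqref{estimate-integral-black-dark-normH0} is available (applied with $c^*=0$), the right-hand side is $\lesssim c^2\|\zf\|_{\cH_0}^2$, so choosing $\cf$ small enough that the implied constant times $\cf^2$ is at most $\tfrac12$ yields $\tfrac12\|\zf\|_{\cH_0}^2\le\|\zf\|_{\cH_c}^2\le\tfrac32\|\zf\|_{\cH_0}^2$; this simultaneously gives $\cH_c\equiv\cH_0$ and \eqref{equivalence-norm-a} with $\sigma_1=\tfrac12$, $\sigma_2=\tfrac32$.

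To prove \eqref{estimate-integral-black-dark-normH0} I separate the two difficulties, namely smallness in $c$ of the coefficient and control of the possibly unbounded factor $|\zf|^2$. Since $\|\phi_c\|_{L^\infty}=1$ by \eqref{LinftyDark} and $|\phi_0|\le1$, the factorization $|\phi_c|^4-\phi_0^4=(|\phi_c|^2-\phi_0^2)(|\phi_c|^2+\phi_0^2)$ gives $\big||\phi_c|^4-\phi_0^4\big|\le2\big||\phi_c|^2-\phi_0^2\big|$. I then record a weighted $L^1$ bound on this coefficient: inserting $\sqrt{\eta_0}$ and applying Cauchy--Schwarz,
\[
\int_\R(1+|x|)\big||\phi_c|^4-\phi_0^4\big|\,dx\le2\Big(\int_\R(1+|x|)^2\eta_0\Big)^{1/2}\Big\|\tfrac{|\phi_c|^2-\phi_0^2}{\sqrt{\eta_0}}\Big\|_{L^2}\lesssim c^2,
\]
where the first factor is a fixed finite constant because $\eta_0$ decays exponentially, and the last step is exactly \eqref{new-maxCoercivity-a}. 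Write $w_c:=\big||\phi_c|^4-\phi_0^4\big|$, so that $\int_\R(1+|x|)w_c\lesssim c^2$.

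The heart of the argument, and the step I expect to be the main obstacle, is a weighted Poincar\'e-type inequality that converts this weighted-$L^1$ control of the coefficient into a bound of $\int w_c|\zf|^2$ by the full $\cH_{c^*}$-norm, \emph{even though elements of $\cH_{c^*}$ need not be bounded} (they lie in $\dot H^1$ and may grow like $|x|^{1/2}$). The mechanism is the fundamental theorem of calculus: for $\zf\in\cH_{c^*}$ and any $y$, Cauchy--Schwarz gives $|\zf(x)|^2\le2|\zf(y)|^2+2|x-y|\,\|\zf'\|_{L^2}^2$, and averaging $y$ over $[-1,1]$ yields $|\zf(x)|^2\lesssim\int_{-1}^1|\zf|^2+(1+|x|)\|\zf'\|_{L^2}^2$. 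Multiplying by $w_c$ and integrating, the finiteness of $\int w_c$ and $\int(1+|x|)w_c$ turns the growth of $\zf$ into a harmless polynomial weight:
\[
\int_\R w_c|\zf|^2\lesssim\Big(\int_\R(1+|x|)w_c\Big)\Big(\|\zf'\|_{L^2}^2+\int_{-1}^1|\zf|^2\Big).
\]
To absorb the local mass I use that $\phi_{c^*}\to\phi_0$ uniformly on $[-1,1]$ as $c^*\to0$ by \eqref{darkApprox} and that $\eta_0$ is bounded below there, so $\eta_{c^*}\ge\delta_0>0$ on $[-1,1]$ uniformly for $|c^*|<\cf$, whence $\int_{-1}^1|\zf|^2\le\delta_0^{-1}\int_\R\eta_{c^*}|\zf|^2$. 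Combining with $\int(1+|x|)w_c\lesssim c^2$ delivers $\int_\R w_c|\zf|^2\lesssim c^2\|\zf\|_{\cH_{c^*}}^2$, which is \eqref{estimate-integral-black-dark-normH0}.

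Two points require care to make the claim uniform. First, every implied constant must be independent of both $c$ and $c^*$ for $|c|,|c^*|<\cf$; this is secured by freezing the reference weight $\eta_0$ and the interval $[-1,1]$ in the Cauchy--Schwarz and Poincar\'e steps, and by the uniform lower bound $\delta_0$ on $\eta_{c^*}$. Second, the logical order matters: I establish \eqref{estimate-integral-black-dark-normH0} directly, its proof using only $\eta_{c^*}\ge\delta_0$ on a compact set together with the fixed constant $\int(1+|x|)^2\eta_0$, and only afterwards deduce $\cH_c\equiv\cH_0$ and \eqref{equivalence-norm-a}, thereby avoiding any circularity between the space identification and the estimate.
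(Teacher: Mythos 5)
Your proof is correct, and it shares the paper's global skeleton --- reduce $\big||\phi_c|^4-\phi_0^4\big|$ to $2\big||\phi_c|^2-\phi_0^2\big|$, import the $c^2$-smallness from a preliminary appendix estimate, use the fundamental theorem of calculus to tame the possible $|x|^{1/2}$-growth of $\zf$, and then deduce \eqref{equivalence-norm-a} softly from $\eta_c-\eta_0=\phi_0^4-|\phi_c|^4$ --- but it substitutes two of the middle steps in a genuinely different way. Where the paper invokes the weighted $L^\infty$ bound \eqref{new-maxCoercivity-a00} to dominate the coefficient pointwise by $c^2(1+x^2)\eta_c$ and then integrates against $|\zf|^2$, you instead pair the $L^2$ bound \eqref{new-maxCoercivity-a} with Cauchy--Schwarz against $(1+|x|)\sqrt{\eta_0}$ to obtain the weighted-$L^1$ coefficient bound $\int_\R(1+|x|)w_c\lesssim c^2$; and where the paper anchors the FTC at the single point $x=0$ and then controls $|\zf(0)|^2$ through a cut-off function, the boundedness of $\chi/\sqrt{\eta_{c^*}}$ and a Gagliardo--Nirenberg/Sobolev step (its estimate \eqref{lemma-integral-black-dark-normH0-c}), you average the FTC over $y\in[-1,1]$ and absorb the local mass $\int_{-1}^1|\zf|^2$ via a uniform lower bound $\eta_{c^*}\ge\delta_0$ on $[-1,1]$. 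Your route is somewhat more elementary (no Sobolev embedding, no cut-off), and the uniformity of constants in $(c,c^*)$ is equally transparent; the paper's version is marginally shorter once \eqref{new-maxCoercivity-a00} is in hand. Two cosmetic points: \eqref{darkApprox} is stated as a pointwise limit, so the uniform convergence of $|\phi_{c^*}|^2$ to $\phi_0^2$ on $[-1,1]$ that underlies your $\delta_0$ should be justified from the explicit formula (continuity of $\mu_1,\mu_2,\mu,\ka$ in $c$ on each side of $0$), which is immediate but worth saying; and for the set identity $\cH_c\equiv\cH_0$ you need \eqref{estimate-integral-black-dark-normH0} with \emph{both} choices $c^*=0$ (giving $\cH_0\subset\cH_c$) and $c^*=c$ (giving $\cH_c\subset\cH_0$), whereas your closing paragraph only invokes $c^*=0$ --- since you proved the general-$c^*$ statement this is a one-line fix, not a gap.
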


\begin{proof}
From \eqref{new-maxCoercivity-a00} and using the identity $\displaystyle \zf(x)= \zf(0) + \int_0^x\zf'(\tilde{x})d\tilde{x}$, which implies  
\[|\zf(x)| \le |\zf(0)| + |x|^{1/2}\|\zf'\|_{L^2},\]
we have that 

\begin{equation}\label{lemma-integral-black-dark-normH0-a}
\begin{aligned}
\int_\R\big||\phi_c|^4-\phi_0^4\big||\zf|^2dx& \le 2\int_\R \big||\phi_c|^2-\phi_0^2\big||\zf|^2dx\\
&\lesssim c^2\int_\R (1+x^2)\eta_c(x)|\zf(x)|^2dx\\
& \lesssim c^2\Big(|\zf(0)|^2\underbrace{\int_\R (1+x^2)\eta_c(x)dx}_{(I)}
+ \|\zf'\|_{L^2}^2\underbrace{\int_\R (1+x^2)\eta_c(x)|x|dx}_{(II)}\Big). 
\end{aligned}
\end{equation}

\medskip 
\noindent 
Now we show that the last two integrals are uniformly bounded in $c$, with $|c|\le 1$. Firstly, to do this we observe that 
\[\max\big\{ 1+x^2,\; ( 1+x^2)|x|\big\}\lesssim \cosh\big(\kappa(c)x\big),\]
for all $x\in \R$ and $|c|\leq 1$. Furthermore, due to the exponential decay of $\eta_c(x)$ one gets
\begin{equation*}
\begin{aligned}
 (I) + (II) \lesssim &\int_\R \cosh\big(\kappa(c)x\big)\eta_c(x)dx =
\int_\R \cosh\big(\kappa(c)x\big)\left(1-\frac{(\mu_1^2+\mu_2^2\tanh^2(\kappa(c)x))^2}{4(1+\mu\tanh^2(\kappa(c)x))^2}\right)dx\\
& =\frac{\pi}{4\sqrt{2}}\frac{12-4\mu_1^2 -\mu_1^4}{\kappa(c)\sqrt{\mu_1^2 + \mu_2^2}}.
\end{aligned}
\end{equation*}

\noindent 
So, using this control, from \eqref{lemma-integral-black-dark-normH0-a} we conclude that

\begin{equation}\label{lemma-integral-black-dark-normH0-b}
\int_\R\big||\phi_c|^4-\phi_0^4\big||\zf|^2dx \lesssim c^2 \big( |\zf(0)|^2 + \|\zf\|_{\cH_{c^*}}^2\big). 
\end{equation}

\noindent
To estimate $|\zf(0)|^2$ we consider a cut-off function $\chi\in C^{\infty}(\R,[0,1])$ such that 
\[ \chi =  1 \; \text{on}  \; [-1,1] \quad  \text{and} \quad  \chi =  0\;  \text{on}\;  \R\setminus[-2,2].\] 
Then, using that the functions 
 
\[\chi(x)/\sqrt{\eta_{c^*}(x)}\quad \text{and} \quad  \chi'(x)/\sqrt{\eta_{c^*}(x)}\]

\medskip 
\noindent 
are bounded  on $\R$ (uniformly for  $|c^*|< \cf$), combined with the Sobolev embedding, we have 

\begin{equation}\label{lemma-integral-black-dark-normH0-c}
|\zf(0)|^2 \le \|\chi \zf \|^2_{L^{\infty}}\lesssim \|\chi \zf \|_{L^2}\big(\|\chi' \zf \|_{L^2} + \|\chi \zf' \|_{L^2}\big) \lesssim \|\zf\|^2_{\cH_{c^*}}. 
\end{equation}

\medskip
\noindent
Thus, \eqref{estimate-integral-black-dark-normH0} follows by substituting \eqref{lemma-integral-black-dark-normH0-c} into \eqref{lemma-integral-black-dark-normH0-b}.  
Finally, in view of \eqref{estimate-integral-black-dark-normH0},  and using the relation 

\[
\|\zf\|_{\mathcal{H}_c}^2 -\|\zf\|_{\mathcal{H}_0}^2  = \int_\R(\phi_0^4-|\phi_c|^4)|\zf|^2dx, 
\] 

\noindent 
we check that $\cH_c \equiv \cH_0$ for all $|c| < \cf$ and further we have \eqref{equivalence-norm-a}. 
\end{proof}

\section{Coercivity  of the {\color{black}quintic} Ginzburg-Landau energy}\label{Sec3}

In this section we establish that the {\color{black}quintic} Ginzburg-Landau energy $E_2$ \eqref{E2} is coercive  around {\color{black} $\phi_0$ and $\phi_c$} solitons respectively. First of all, we establish some preliminary notation and results. 

\medskip 

We first expand the energy $E_2$ in \eqref{E2} {\color{black} around $\phi_0$} given in \eqref{black5gp}.  Let, $\zf:= \zf_1+ i\zf_2$, with $\zf_1, \zf_2\in \R$, and define
\be\label{rho0}
\rho_0(\zf):=|\phi_0+\zf|^2-|\phi_0|^2= 2\re(\phi_0\bar{\zf}) + |\zf|^2 = 2\phi_0\zf_1 + |\zf|^2. 
\ee
\medskip
\noindent
Then, 
\be \nonumber 
\begin{aligned}
 E_2[\phi_0+\zf]&=  \int_\R \Big[|\phi'_0+\zf_x|^2 + \tfrac13(1-|\phi_0+\zf|^2)^2(2+|\phi_0+\zf|^2)\Big]dx\\
 &=\int_\R \Big[(\phi'_0)^2 + 2\re(\phi'_0\bar{\zf}_x) + |\zf_x|^2 + \tfrac13(1-\phi_0^2 - \rho_0)^2(2+\phi_0^2+ \rho_0)\Big]dx\\
 &= E_2[\phi_0] - 2\re\int_\R \bar{\zf}\big(\phi''_0 + \eta_0\phi_0\big)dx\\
 &\hspace{3cm}+ \int_\R\big(|\zf_x|^2 - \eta_0|\zf|^2\big)dx + \int_\R\big(\phi_0^2\rho_0^2  +  \tfrac{1}{3}\rho_0^3\big)dx,
\end{aligned}
\ee

\medskip 
\noindent 
thus, using \eqref{edoblack1}, we have

\be\label{E2expanBlack}
 E_2[\phi_0+\zf] -E_2[\phi_0] = 2 Q_0[\zf] + \cN_0[\zf],
\ee

\medskip 
\noindent
where $Q_0[\zf]$ is the quadratic form 
\be\label{Q0}
Q_0[\zf]:=\frac{1}{2}\int_\R\big(|\zf_x|^2 - \eta_0|\zf|^2\big)dx,
\ee
and $\mathcal{N}_0[\zf]$ is the nonlinear term 
\be\label{N0}
\mathcal{N}_0[\zf]:= \int_\R\big(|\phi_0|^2\rho_0^2  +  \tfrac{1}{3}\rho_0^3\big)dx.
\ee

\medskip 
\noindent
In the case of $\zf=f$ is a real-valued function belonging to the space $\mathcal{H}^{real}(\R)$ (see \eqref{identification-spaces}), 

\be\label{Q01}
Q_0[f]:=\frac{1}{2}\int_\R\big[(f')^2 - \eta_0f^2\big]dx.
\ee

\medskip 
\noindent
Then, considering now  $\mathcal{H}^{real}(\R)$ endowed with the inner product 
\be\label{inner-product}
\langle f,g\rangle_0:= \int_{\R}(f'g' + \eta_0fg)dx, 
\ee
we have that  $\big(\mathcal{H}^{real}(\R),\, \langle \cdot,\cdot\rangle_0\big)$ is a Hilbert space with the induced norm
\be\label{H0-norm-real}
\|f\|_{\cH_0}^2=\int_{\R}\big[(f')^2 + \eta_0f^2\big]dx.
\ee

\medskip 
For a fixed $f\in \mathcal{H}^{real}(\R)$ we have $g \longmapsto \ds \int_\R\eta_0fg\in \big[\mathcal{H}^{real}(\R)\big]'$. Indeed,

\be\label{step1a}
\begin{aligned}
 |\int_\R\eta_0fgdx| &\leq\|\eta_0^{1/2}f\|_{L^2}\|\eta_0^{1/2}g\|_{L^2}\leq \|\eta_0^{1/2}f\|_{L^2}\|g\|_{\cH_0}.
\end{aligned}
\ee

\medskip 
\noindent
Therefore, by the Riesz Theorem, there exists a bounded and self-adjoint operator $T_0$ such that

\be\label{step1a0}
 \langle T_0f,g\rangle_0 = \int_\R\eta_0fgdx,\quad \forall g\in \mathcal{H}^{real}(\R),
\ee

\noindent
and also

\[
 \|T_0f\|_{\cH_0} \leq \|\eta_0^{1/2}f\|_{L^2}.
\]

\medskip
\noindent 
Moreover, the quadratic form $Q_0$ satisfies

\be\label{step1b}
\begin{aligned}
Q_0[f]&=\tfrac12\int_\R\big[(f')^2 + \eta_0f^2\big]dx - \int_\R\eta_0f^2dx\\
&= \big \langle (\tfrac12\id - T_0)f, f \big\rangle_0,
\end{aligned}
\ee

\medskip 
\noindent 
for all $f\in \mathcal{H}^{real}(\R)$.

\begin{lemma}[Compactness of $T_0$] \label{lemma-compact-T0}
The operator $T_0: \mathcal{H}^{real}(\R) \rightarrow \mathcal{H}^{real}(\R)$ is compact.
\end{lemma}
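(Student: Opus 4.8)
The plan is to exploit the standard characterization that a bounded linear operator between Hilbert spaces is compact if and only if it maps weakly convergent sequences to norm-convergent ones. Since the text already records the bound $\|T_0 f\|_{\cH_0}\le \|\eta_0^{1/2}f\|_{L^2}$ right after \eqref{step1a0}, it suffices to establish the \emph{compact embedding}
\[
\cH^{real}(\R)\hookrightarrow L^2(\eta_0\,dx),
\]
that is: whenever $f_n\rightharpoonup 0$ weakly in $\big(\cH^{real}(\R),\langle\cdot,\cdot\rangle_0\big)$, one has $\eta_0^{1/2}f_n\to 0$ in $L^2(\R)$; the displayed bound then forces $T_0 f_n\to 0$ in $\cH_0$. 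So I would fix a sequence with $\|f_n\|_{\cH_0}\le C$ and $f_n\rightharpoonup 0$, and prove this weighted convergence.

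The decisive structural feature is that $\eta_0=1-\phi_0^4$ decays \emph{exponentially}, because $\phi_0(x)\to\pm1$ as $x\to\pm\infty$; say $\eta_0(x)\lesssim e^{-\alpha|x|}$ for some $\alpha>0$. First I would control the tails. Exactly as in the proof of Lemma \ref{lemma-integral-black-dark-normH0}, writing $f_n(x)=f_n(0)+\int_0^x f_n'$ together with the cut-off/Sobolev estimate $|f_n(0)|^2\lesssim \|f_n\|_{\cH_0}^2$ gives the pointwise bound $|f_n(x)|^2\lesssim (1+|x|)\|f_n\|_{\cH_0}^2\lesssim (1+|x|)C^2$. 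Hence for every $R>0$,
\[
\int_{|x|>R}\eta_0|f_n|^2\,dx\lesssim C^2\int_{|x|>R}(1+|x|)e^{-\alpha|x|}\,dx,
\]
which tends to $0$ as $R\to\infty$, \emph{uniformly} in $n$.

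On each fixed window $[-R,R]$ I would use local compactness. The derivatives satisfy $\int_{-R}^R (f_n')^2\le \|f_n\|_{\cH_0}^2\le C^2$, while the pointwise bound above yields $\int_{-R}^R f_n^2\,dx\lesssim (1+R)^2 C^2$; thus $(f_n)$ is bounded in $H^1([-R,R])$. By Rellich--Kondrachov a subsequence converges in $L^2([-R,R])$, and the weak convergence $f_n\rightharpoonup 0$ identifies the limit as $0$. Since $\eta_0$ is bounded on $[-R,R]$, it follows that $\int_{-R}^R\eta_0|f_n|^2\,dx\to 0$ along this subsequence. A diagonal extraction over an increasing sequence $R_k\to\infty$, combined with the uniform tail bound, produces a single subsequence with $\eta_0^{1/2}f_n\to 0$ in $L^2(\R)$; uniqueness of the weak limit then upgrades this to convergence of the full sequence, completing the argument.

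The main obstacle is that elements of $\cH^{real}(\R)$ need not lie in $L^2(\R)$: they only satisfy $f'\in L^2$ and $\eta_0^{1/2}f\in L^2$, and indeed $f$ may grow like $|x|^{1/2}$, so one cannot bound the tail by $\int_{|x|>R}|f_n|^2$ and must instead play the exponential decay of $\eta_0$ against the at-most-linear-in-$|x|$ growth of $|f_n|^2$ furnished by the energy bound. This interplay is exactly what makes the weighted embedding compact despite the absence of global $L^2$ control, and it is the only point requiring care beyond the routine application of Rellich--Kondrachov.
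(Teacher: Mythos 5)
Your proof is correct and takes essentially the same route as the paper: both reduce compactness of $T_0$, via the bound $\|T_0f\|_{\cH_0}\le\|\eta_0^{1/2}f\|_{L^2}$, to showing that weak convergence in $\cH^{real}(\R)$ forces strong convergence in $L^2(\eta_0\,dx)$, with the tails controlled by playing the exponential decay of $\eta_0$ against the at-most-linear growth of $|f_n(x)|^2$ (the paper's Claim~2 with the $\eta_0^{1/4}$ weight) and the compact windows handled by local compactness (the paper packages this as the $H^1$ bound on $\eta_0^{1/2}f_n$ in its Claim~1 giving uniform convergence on compacts, where you invoke Rellich--Kondrachov on $f_n$ directly, a cosmetic variant). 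The one detail you leave implicit --- that the local $L^2$ limit is zero --- is routine: for compactly supported $\psi\in L^2$ the map $f\mapsto\int_\R\eta_0 f\psi\,dx$ is a bounded linear functional on $\cH^{real}(\R)$, so weak convergence gives $\int_\R\eta_0 f_n\psi\,dx\to 0$, and since $\eta_0>0$ everywhere the limit vanishes.
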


\bp
 Throughout the  proof we will use $M_j,\; j=1,2\dots,6$, to denote some universal constants.

Consider now a sequence $f_n\in \mathcal{H}^{real}(\R)$ such that

\begin{equation}\label{step1b0}
 \|f_n\|_{\cH_0}^2=\|f'_n\|^2_{L^2} + \|\eta_0^{1/2}f_n\|^2_{L^2}\leq M_1,\quad \forall n\in\N.
\end{equation}
\noindent
Then, we can assume that
\[
f_n \rightharpoonup f^* \in  \mathcal{H}^{real}(\R),\quad \text{when} \quad  n\rightarrow\infty.
\]

\medskip 
\noindent
\textbf{\emph {Claim 1:}} It holds that 
\begin{equation}\label{step1c0}
\begin{aligned}
\|\eta_0^{1/2}f_n\|_{H^1}^2\leq M_2,\quad \text{for all}\;\, n\in\N.
\end{aligned}
\end{equation}

\medskip 
To obtain this estimate we note that

\begin{equation}\label{step1c}
\begin{aligned}
 \|\eta_0^{1/2}f_n\|_{H^1}^2 \backsimeq  \|\eta_0^{1/2}f_n\|_{L^2}^2 + \|\eta_0^{1/2}f'_n\|_{L^2}^2
 + \Big\|\frac{-2\phi_0^3\phi_0'}{\eta_0^{1/2}}f_n\Big\|_{L^2}^2.
\end{aligned}
\end{equation}

\medskip 
\noindent 
Now, using \eqref{edoblack3} and that $|\phi_0|\leq1$, we get

\begin{equation}\label{step1d}
\begin{aligned}
\frac{2|\phi_0|^3|\phi_0'|}{\eta_0^{1/2}}
& = \tfrac{2}{\sqrt{3}} \frac{|\phi_0|^3(2 + \phi_0^2)^{1/2}}{(1+\phi_0^2)^{1/2}}(1-\phi_0^2)^{1/2}
  \leq 2\eta_0^{1/2},
\end{aligned}
\end{equation}
\noindent
so we have 
\begin{equation}\label{step1e}
\Big\|\frac{-2\phi_0^3\phi_0'}{\eta_0^{1/2}}f_n\Big\|_{L^2} \leq 2 \|\eta_0^{1/2}f_n\|_{L^2}.
\end{equation}

\medskip
\noindent 
Then, using that  $\eta_0(x) \le 3 \sech^2(x)$, combined with   \eqref{step1b0}, \eqref{step1c} and \eqref{step1e} we obtain the statement 
in \eqref{step1c0} and Claim 1 \eqref{step1c0} is proved. 

\medskip 
In particular, from \eqref{step1c0} we conclude that 

\begin{equation}\label{step1k}
|f_n(0)|\le \|\eta_0^{1/2}f_n\|_{L^{\infty}} \le M_3,\quad \forall \; n\in\N,
\end{equation}

\noindent 
and also we can assume that

\begin{equation}\label{step1f}
\eta_0^{1/2}f_n\longrightarrow \eta_0^{1/2}f^* \in C_{loc}^{0}(\R),
\end{equation}

\medskip 
\noindent
i.e. we get uniform convergence on  compact subsets of $\R$.  

\medskip 
\noindent
\textbf{\emph {Claim 2:}} It holds that
\begin{equation}\label{step1l}
\|\eta_0^{1/4}f_n\|_{L^2}\leq M_4,\quad \text{for all}\;\, n\in\N.
\end{equation}

\medskip 
To prove this estimate we first observe that 

\[
 f_n(x) = f_n(0) + \int_{0}^xf_n'(s)ds,\quad\forall n\in\N,
\]

\noindent 
which implies that 

\[
\eta_0^{1/4}|f_n(x)| \leq \eta_0^{1/4}|f_n(0)| + |x|^{1/2}\eta_0^{1/4}||f'_n||_{L^2}. 
\]

\medskip 
\noindent
Then, from \eqref{step1b0}, \eqref{step1k} and using the exponential decay of $\eta_0$ we get the estimate \eqref{step1l} in Claim 2.

 \medskip 
 Now given $\epsilon>0$, due to the exponential decay of $\eta_0$, we can take $a_{\epsilon}>0$ such that

 \[
 \eta_0^{1/2} < \epsilon,\quad \forall\;  |x|>a_{\epsilon}.
 \]
 
 \noindent
 Then, using \eqref{step1l}, we have
 
\begin{equation}\label{step1f2}
\int_{|x|>a_{\epsilon}} \eta_0(f_n - f^*)^2dx \le \epsilon \int_{|x|>a_{\epsilon}} \eta_0^{1/2}(f_n - f^*)^2dx
\leq \epsilon \|\eta_0^{1/4}(f_n - f^*)\|^2_{L^2} \leq M_5\,\epsilon.
\end{equation}

\medskip
\noindent
Then,  from \eqref{step1f} one gets
 
\[
 ||\eta_0^{1/2}(f_n - f^*)||_{L^{\infty}(|x|\leq a_{\epsilon})} < \epsilon,\quad \forall n>n_\epsilon,~~\text{with some}~~n_\epsilon\gg1.
\]

\medskip 
\noindent
Therefore, by using \eqref{step1l}, we get 
 
\begin{equation}\label{step1n}
\int_{|x|\leq a_{\epsilon}} \eta_0(f_n - f^*)^2dx \leq \epsilon \int_{|x|\leq a_{\epsilon}}\eta_0^{1/2}|f_n - f^*|dx
\leq \epsilon \|\eta_0^{1/4}\|_{L^2}\|\eta_0^{1/4}(f_n - f^*)\|_{L^2}
\leq M_6\epsilon.
\end{equation}

\medskip 
\noindent
Now, from \eqref{step1f2} and \eqref{step1n} notice that

\[
 \int_{-\infty}^{+\infty} \eta_0(f_n - f^*)^2dx\lesssim \epsilon,
\]
\noindent
for all $n\gg n_{\epsilon}$, so $ \lim_{n\rightarrow\infty}\|\eta_0^{1/2}(f_n - f^*)\|_{L^2}=0$. Finally, from \eqref{step1a0} we have that 

$$\|T_0(f_n-f^*)\|_{\cH_0}\leq \|\eta_0^{1/2}(f_n - f^*)\|_{L^2},$$

\noindent
which implies that 
\[
 T_0f_n\xrightarrow{n\to \infty} T_0f^* \quad \text{in}\quad   \mathcal{H}^{real}(\R),
\]
and the proof is finished. 
\ep

\begin{proposition}[Coercivity of $E_2$ around the black soliton]\label{prop1}
Let $\zf\in  \mathcal{H}(\R)$  be such that the perturbation  $\phi_0 + \zf\in \mathcal{Z}(\R)$ and set $\rho_0=2\re(\phi_0 \bar{\zf}) + |\zf|^2$ as in \eqref{rho0}.
Then there exists a universal positive constant $\Lambda_0>0$  such that

\be\label{expanE2}
E_2[\phi_0+\zf] - E_2[\phi_0]\geq \Lambda_0\big(\|\zf\|_{\cH_0}^2 + \|\phi_0^3\rho_0\|_{L^2}^2+ \|\zf_2\rho_0\|_{L^2}^2\big) - \frac{1}{\Lambda_0}\|\zf\|_{\cH_0}^3
\ee
\noindent
as soon as 

\be\label{ortogonalityBlack}
\int_\R\langle\eta_0,\zf\rangle_\C=0,\quad  \int_\R\langle i\eta_0,\zf\rangle_\C =0,\quad \text{and} \quad \int_\R\langle i\phi_0\eta_0,\zf\rangle_\C=0.
\ee
\end{proposition}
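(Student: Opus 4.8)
The plan is to start from the exact expansion \eqref{E2expanBlack}, $E_2[\phi_0+\zf]-E_2[\phi_0]=Q_0[\zf]+\mathcal{N}_0[\zf]$, and to split $\zf=\zf_1+i\zf_2$. Since $\eta_0$ is real, the quadratic form decouples, $Q_0[\zf]=Q_0[\zf_1]+Q_0[\zf_2]$ with $Q_0[f]=\tfrac12\int_\R((f')^2-\eta_0 f^2)$ as in \eqref{Q01}, and there is no quadratic cross term between $\zf_1$ and $\zf_2$. Writing $Q_0[f]=\langle(\tfrac12\id-T_0)f,f\rangle_0$ as in \eqref{step1b} and invoking the compactness of $T_0$ (Lemma \ref{lemma-compact-T0}), the operator $\tfrac12\id-T_0$ has essential spectrum $\{\tfrac12\}$ and only finitely many non-positive eigenvalues, so coercivity of $Q_0$ reduces to neutralising these finitely many modes. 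The relevant linearised form is governed by $-\partial_x^2-\eta_0=-\partial_x^2-(1-\phi_0^4)$: its potential is a non-positive well decaying at infinity, so it has a single strictly negative eigenvalue with even positive ground state $\psi_0$, while \eqref{edoblack1} shows that $\phi_0$ solves $(-\partial_x^2-\eta_0)\phi_0=0$, so $\phi_0\in\cH_0$ is a zero mode of $Q_0$ (indeed $Q_0[\phi_0]=0$). Since $\phi_0$ is odd with a single node, it is the first excited state and there are no further non-positive modes.

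Next I would treat the imaginary part, where $\rho_0$ carries no term linear in $\zf_2$, so $\mathcal{N}_0$ offers no quadratic help and coercivity must come entirely from the two conditions in \eqref{ortogonalityBlack}. By parity these decouple: $\int_\R\langle i\eta_0,\zf\rangle_\C=\int\eta_0\zf_2$ pairs the even weight $\eta_0$ against the even negative mode $\psi_0$ (with $\int\eta_0\psi_0\neq0$ but $\int\eta_0\phi_0=0$), while $\int_\R\langle i\phi_0\eta_0,\zf\rangle_\C=\int\phi_0\eta_0\zf_2$ pairs the odd weight $\phi_0\eta_0$ against the odd kernel direction $\phi_0$ (with $\int\phi_0^2\eta_0\neq0$ but $\int\phi_0\eta_0\psi_0=0$). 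Hence the two conditions independently annihilate the projections of $\zf_2$ onto $\psi_0$ and $\phi_0$, and the spectral gap yields $Q_0[\zf_2]\ge\lambda\|\zf_2\|_{\cH_0}^2$ for some $\lambda>0$.

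The real part is the delicate one, and I expect it to be the main obstacle. Only one condition, $\int_\R\langle\eta_0,\zf\rangle_\C=\int\eta_0\zf_1=0$, is available; by parity it removes the even negative mode $\psi_0$ but leaves the odd zero mode $\phi_0$ entirely uncontrolled (on it $\int\eta_0\phi_0=0$ and $Q_0[\phi_0]=0$). The crucial point is that this residual direction must be recovered from the nonlinear term rather than from $Q_0$ or from orthogonality. Writing $\zf_1=a\phi_0+w$ with $w$ in the coercive part, one has $\rho_0=2a\phi_0^2+2\phi_0 w+|\zf|^2$, so a reserved fixed fraction of the nonnegative term $\int_\R\phi_0^2\rho_0^2\subset\mathcal{N}_0$ dominates $a^2\int\phi_0^6$ up to cross terms in $w$ and higher powers. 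What makes this step subtle is that $\phi_0^4\to1$ at infinity, so $\int\phi_0^4\zf_1^2$ is not even finite on $\cH_0$ and cannot be absorbed into the $\cH_0$-quadratic form; the $\phi_0$-direction coercivity is genuinely a combined $Q_0+\mathcal{N}_0$ statement. (Equivalently, borrowing the piece $2\int\phi_0^4\zf_1^2$ converts the effective real operator into $-\partial_x^2-(1-5\phi_0^4)$, whose positive ground state $\phi_0'=\tfrac{1}{\sqrt3}\mfq_0\eta_0$ sits at eigenvalue zero, as one checks by differentiating \eqref{edoblack1}.) A compactness/contradiction argument built on Lemma \ref{lemma-compact-T0} then upgrades nonnegativity to strict coercivity on the constrained subspace.

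Finally I would assemble the pieces and extract the advertised nonlinear norms from the part of $\mathcal{N}_0$ not spent on the real zero mode. Splitting $\tfrac13\rho_0^3=\tfrac23\phi_0\zf_1\rho_0^2+\tfrac13|\zf|^2\rho_0^2$ and absorbing the cross term by Young's inequality gives $\mathcal{N}_0\gtrsim\int_\R\phi_0^2\rho_0^2+\int_\R|\zf|^2\rho_0^2$; then, using $\phi_0^2\ge\phi_0^6$ and $|\zf|^2\ge\zf_2^2$, a fixed portion controls $\|\phi_0^3\rho_0\|_{L^2}^2+\|\zf_2\rho_0\|_{L^2}^2$ while another fixed portion feeds the estimate for $a^2$ above. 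The genuinely cubic cross terms arising both in the control of $a$ and in the remainder of $\mathcal{N}_0$ are bounded by $C\|\zf\|_{\cH_0}^3$ via Sobolev and Gagliardo–Nirenberg estimates (using $\|\phi_0\|_{L^\infty}\le1$), which accounts for the error $-\tfrac{1}{\Lambda_0}\|\zf\|_{\cH_0}^3$. Taking $\Lambda_0$ to be the minimum of the two coercivity constants and the extraction constants then yields \eqref{expanE2}.
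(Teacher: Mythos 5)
Your skeleton matches the paper's architecture: the expansion \eqref{E2expanBlack}, the decoupling $Q_0[\zf]=Q_0[\zf_1]+Q_0[\zf_2]$, the compactness of $T_0$, the spectral-gap coercivity of $Q_0[\zf_2]$ under the last two conditions in \eqref{ortogonalityBlack}, the Young-inequality lower bound for $\mathcal{N}_0$, and Gagliardo--Nirenberg for the cubic errors are all exactly the paper's Steps 1--2. One inaccuracy in your spectral picture: in $(\mathcal{H}^{real},\langle\cdot,\cdot\rangle_0)$ the negative direction of $\widetilde{Q}_0$ is not a localized Schr\"odinger ground state $\psi_0$ but the \emph{constant function} $1$ (set $\lambda=-\tfrac12$ in \eqref{spectral2}: $f''=0$ and $f'\in L^2$ force $f=\mathrm{const}$, which lies in $\mathcal{H}^{real}$ since $\int_\R\eta_0=3$). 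Consequently the constraints $\int\eta_0\zf_2=0$ and $\int\phi_0\eta_0\zf_2=0$ are \emph{literally} $\langle\zf_2,1\rangle_0=\langle\zf_2,\phi_0\rangle_0=0$ (using $\int \zf_2'\phi_0'=\int\eta_0\phi_0\zf_2$ from \eqref{edoblack1}), so no nondegenerate-pairing lemma is needed; your version of this step is repairable but misidentifies the bad mode.

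The genuine gap is in the real direction, which is the heart of the proposition. Your mechanism --- decompose $\zf_1=a\phi_0+w$ and let a reserved fraction of $\int_\R\phi_0^2\rho_0^2$ dominate $a^2\int_\R\phi_0^6$ ``up to cross terms in $w$'' --- is not meaningful as written: $\int_\R\phi_0^6=\infty$, and the cross terms $a\int\phi_0^5w$, $\int\phi_0^4w^2$ are in general divergent for $w\in\cH_0$ (as you yourself observe, $\int\phi_0^4\zf_1^2$ is not finite on $\cH_0$; the cancellations live \emph{inside} $\rho_0$, so a term-by-term expansion of $\int\phi_0^2\rho_0^2$ in powers of $\zf_1$ is illegitimate, and the same objection defeats the parenthetical ``borrowing'' that produces $-\partial_x^2-(1-5\phi_0^4)$, correct though that linearization is). Your fallback, ``a compactness/contradiction argument built on Lemma \ref{lemma-compact-T0},'' does not close this: that lemma concerns the compact operator $T_0$ representing the quadratic form $Q_0$, whereas the functional to be made coercive here, $Q_0[\zf_1]+\tfrac23\|\phi_0\rho_0\|_{L^2}^2$, has no finite quadratic part on $\cH_0$, bounded sequences in $\cH_0$ are compact only locally, and even the nonnegativity on the constrained subspace that such an upgrade would start from was never established. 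The paper's Step 3 instead closes the argument \emph{quantitatively}: it splits the weight via $\phi_0^2=\phi_0^6+\eta_0\phi_0^2$, keeping $\|\phi_0^3\rho_0\|_{L^2}^2$ intact and expanding only the $\eta_0$-localized remainder, which yields the finite quadratic gain $\tfrac83\int\eta_0\phi_0^4\zf_1^2$ plus cubic terms handled through \eqref{edoblack1}, integration by parts and Gagliardo--Nirenberg (\eqref{estim2}--\eqref{estim4}); it then bounds the kernel projection $\langle\zf_1,e_1\rangle_0^2$ using the exact identity $\int\zf_1'\phi_0'=\int\eta_0\phi_0\zf_1$ (hence the $(1\mp\nu)$ rebalancing in \eqref{estim7-b}), Cauchy--Schwarz with $\int\eta_0=3$, and the pointwise bound $\phi_0^2\le\tfrac14+\phi_0^4$. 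The proof succeeds only because the explicit numerical conditions \eqref{estim7-d} and \eqref{estim9} hold, i.e. $\tfrac{3(1+\nu)^2}{2\|\phi_0\|_{\cH_0}^2}<1$ and $\tfrac{3(1+\nu)^2}{\|\phi_0\|_{\cH_0}^2}<\tfrac83$ with $\|\phi_0\|_{\cH_0}^2=E_2[\phi_0]=2\sqrt{3}\arctanh(1/\sqrt{3})$: there is a genuine numerical threshold (the gained coefficient $\tfrac83$ must beat the lost projection) that a soft compactness argument cannot detect, and it is precisely this step that your sketch leaves unproven.
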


\begin{proof}

The proof will be divided into 3 steps. 

\medskip

\noindent \textbf{Step 1}: There exists a  constant $\Lambda_1>0$ such that

\medskip 

\[
Q_0[f]\geq \Lambda_1 \langle f,f\rangle_0 = \Lambda_1\int_\R\big[(f')^2 + \eta_0f^2\big]dx,
\]
\noindent
for any function  $f\in \mathcal{H}^{real}(\R)$ such that 

\medskip 
\noindent (1a)\; $\ds  \int_\R f\eta_0dx =0$\quad  and \quad  $\ds \int_\R f\phi_0\eta_0dx=0.$

\medskip 
\noindent 
Furthermore,

\medskip 
\noindent
(1b)\; $Q_0[f]\geq0$\; if only the first orthogonality  condition in (1a)  is satisfied.

\medskip
\noindent{\it \textbf{Proof of Step 1.}} 
Recall that from \eqref{step1b}  we have $Q_0[f]=\big \langle (\tfrac12\id - T_0)f, f \big\rangle_0
= \langle \widetilde{Q}_0f, f \big\rangle_0$, where

\be\label{tildeQ0}
\widetilde{Q}_0:=\tfrac12\id - T_0.
\ee

Then, using the Spectral Theorem, there exists a sequence$\{\lambda_n\}$ of eigenvalues for $\widetilde{Q}_0$ with
$\displaystyle \lim_{n\to +\infty}\lambda_n=\tfrac12$ and a Hilbert basis $\{e_n\}$ of  $\mathcal{H}^{real}(\R)$ such that

\[
 \widetilde{Q}_0e_n=\lambda_n e_n,\quad n\in\N.
\]

\noindent
\medskip
Notice that 
\[Q_0[f]\leq \tfrac12\langle f,f\rangle_0\quad \forall \;f\in \mathcal{H}^{real}_0(\R),\]

\noindent
\medskip
consequently, 
\[
\widetilde{Q}_0\leq \tfrac12\id,\quad (\lambda_n)_{n\in \N}\subset(-\infty,\tfrac12] \quad \text{and}\quad \lambda_n\nearrow \tfrac12.
\] 

\noindent
\medskip 
Now, let $\lambda\in(-\infty,\tfrac12]$ be an eigenvalue with $f$ as the corresponding eigenfunction. 
Then for all $g\in  \mathcal{H}^{real}(\R)$ we have 

\[
 \langle  \widetilde{Q}_0f,g \rangle_0 = \lambda \langle f,g \rangle_0.
\]

\noindent
\medskip
So, from \eqref{step1a0}, it holds that
\begin{equation*}
\tfrac12\int_\R f'g'dx - \tfrac12\int_\R\eta_0fgdx = \lambda\Big[\int_\R f'g'dx +\int_\R\eta_0fgdx\Big]
\end{equation*}
which yields
\[
\int_\R \Big[ (1-2\lambda)f'' + (2\lambda+1)\eta_0f\Big]gdx = 0,
\]
for all $g\in  \mathcal{H}^{real}(\R)$. Thus,

\[
(1-2\lambda)f'' + (2\lambda+1)\eta_0f=0 \Longrightarrow -f'' -\eta_0f = \frac{4\lambda}{1-2\lambda}\eta_0f,
\]
therefore
\begin{itemize}
 \item  $f=1=:e_0$ \quad \text{is a solution for}\quad $\lambda=-\tfrac12$,\medskip 
 \item  $f=\phi_0=:e_1$ \quad \text{is a solution for} \quad $\lambda=0$.
\end{itemize} 

\medskip
\noindent 

Note that, since $\phi_0$ has exactly one zero, the Sturm-Liouville theory guarantees that $\lambda=-\tfrac12$ is the only negative eigenvalue of  $\widetilde{Q}_0$
with kernel given by  the $\text{span}(\phi_0)$, more precisely

\[
\lambda_0=-\tfrac12 < 0 =\lambda_1 < \lambda_2 <  \cdots \cdots  < \tfrac12,
\]

and 

\[\text{Ker}(\widetilde{Q}_0 + \tfrac12\id) = \R,\quad  \text{Ker}(\widetilde{Q}_0) = \R\cdot\phi_0.
\]

\medskip 
\noindent
Thus, expanding $f\in\mathcal{H}^{real}$ on the normalized basis of eigenfunctions  

\[
f=\sum_{n=0}^{+\infty}\langle f,\tilde{e}_n\rangle_0\tilde{e}_n,\quad \tilde{e}_n=\frac{e_n}{\|e_n\|_{\cH_0}},
\]

if $\langle f,1\rangle_0 = \langle f,\phi_0\rangle_0 = 0$ we get $\displaystyle f=\sum_{n=2}^{+\infty}\langle f,\tilde{e}_n\rangle_0\tilde{e}_n$, and then

\be \label{espectral4a}
 Q_0[f] =\langle \widetilde{Q}_0[f],f\rangle_0 =\sum_{n=2}^{+\infty}\lambda_n\langle f,\tilde{e}_n\rangle_0^2
 \geq\lambda_2\sum_{n=2}^{+\infty}\langle f,\tilde{e}_n\rangle_0^2=\lambda_2\langle f,f\rangle_0.
\ee

Hence, under hypothesis:

\[
\langle f,1\rangle_0 = \int_\R\eta_0fdx=0\quad \text{and}\quad \langle f,\phi_0\rangle_0  = 2\int_\R f\phi_0\eta_0=0,
\]

and taking $\Lambda_1:=\lambda_2$ we finish the proof of Step 1. 

\medskip 

\noindent \textbf{Step 2:} Let $\zf\in \mathcal{H}(\R)$ fulfilling the orthogonality conditions \eqref{ortogonalityBlack}. 
Then, it follows that

\[
  E_2[\phi_0 + \zf] - E_2[\phi_0] \geq 2Q_0[\zf_1]+ \tfrac{2}{3}\|\phi_0\rho_0\|_{L^2}^2 + \tfrac{1}{3}\|\zf_2\rho_0\|_{L^2}^2 
  + 2\Lambda_1\|\zf_2\|_{\mathcal{H}_0}^2,
\]

\medskip 
\noindent 
with $\Lambda_1$ as in Step 1. 

\medskip
\noindent{\it \textbf{Proof of Step 2.}}  Recall that from the expansion of $E_2$ in  \eqref{E2expanBlack} we have
\[ E_2[\phi_0 +\zf] - E_2[\phi_0] = 2Q_0[\zf] + \mathcal{N}_0[\zf],\]
where $\zf = \zf_1 + i\zf_2$, $\rho_0 = 2\phi_0\zf_1 + |\zf|^2$ and $Q_0$ satisfying $Q_0[\zf] = Q_0[\zf_1] + Q_0[\zf_2].$ Applying Young's inequality, we obtain the estimate
\begin{equation}\label{Step2-1}
\begin{split}
\mathcal{N}_0[\zf]&=\int_\R\big(\phi_0^2\rho_0^2 + \tfrac{1}{3}\rho_0^3\big)dx\\
&=\int_{\R}\phi_0^2\rho_0^2dx  + \tfrac{1}{3}\int_{\R}\big(2\phi_0\zf_1 + |\zf|^2\big)\rho_0^2dx\\
&\geq \int_{\R}\phi_0^2\rho_0^2dx + \tfrac{1}{3}\int_{\R}|\zf|^2\rho_0^2dx -\Big|\tfrac{2}{3}\int_{\R}\phi_0\zf_1\rho_0^2dx\Big|\\
&\geq \tfrac{2}{3}\int_{\R}\phi_0^2\rho_0^2dx + \tfrac{1}{3}\int_{\R}(|\zf|^2-\zf_1^2)\rho_0^2dx\\
&=\tfrac{2}{3}\int_{\R}\phi_0^2\rho_0^2dx + \tfrac{1}{3}\int_{\R}\zf_2^2\rho_0^2dx.
\end{split}
\end{equation}
On the other hand, from Step 1, the first two orthogonality conditions in \eqref{ortogonalityBlack} imply that 
\be \label{Step2-2}
Q_0[\zf_1]\ge 0 \quad  \text{and} \quad  Q_0[\zf_2]\ge 0,
\ee
while, in addition, the last orthogonality condition in \eqref{ortogonalityBlack} ensures that 
\begin{equation}\label{Step2-3}
Q_0[\zf_2]\ge \Lambda_1||\zf_2||_{\mathcal{H}_0}^2,
\end{equation}
\noindent
where $\Lambda_1:=\lambda_2$ is the first positive eigenvalue obtained in Step 1. 
Then, putting the bounds given in \eqref{Step2-1}, \eqref{Step2-2} and \eqref{Step2-3} into 
the expansion of $E_2,$ we obtain the claimed estimate in Step 2.

\medskip 
Since $Q_0[\zf_1]\ge 0$, in order to complete the proof of Proposition \ref{prop1}, we remark that, bearing in mind the estimate in the Step 2, 
we only need to show the coercivity  property for the operator $Q_0$ on the full variable $\zf$. We will explain this in the next step.

\bigskip
\noindent \textbf{Step 3:} Now we proceed with the proof of \eqref{expanE2}.

\medskip 
\noindent{\it \textbf{Proof of Step 3.}} We begin by estimating the term $\tfrac{2}{3}\|\phi_0\rho_0\|_{L^2}^2$ which appears in the 
lower estimate of the Step 2

\be\label{estim1}
\tfrac{2}{3}\|\phi_0\rho_0\|_{L^2}^2 =\tfrac{2}{3}\|\phi_0^3\rho_0\|_{L^2}^2 + I,\\
\ee
where
\be\label{estim1-a}
\begin{aligned}
I&:=\tfrac{2}{3}\int_\R\eta_0\phi_0^2\rho_0^2dx\\
&=\tfrac{2}{3}\int_\R\eta_0\phi_0^2|\zf|^4dx +  \tfrac{8}{3}\int_\R\eta_0\phi_0^4\zf_1^2dx + \tfrac{8}{3}\int_\R\eta_0\phi_0^3\zf_1|\zf|^2dx\\
&:= I_1 + I_2 + I_3 \ge I_2 -|I_3|.
\end{aligned}
\ee
Now, bearing in mind \eqref{edoblack1} and integrating by parts, we simplify $I_3$ as follows:

\be\label{estim2}
I_3 = -\tfrac{8}{3}\int_\R\phi_0''\phi_0^2\zf_1|\zf|^2dx =\tfrac{8}{3}\int_\R2(\phi_0')^2\phi_0\zf_1|\zf|^2dx + \tfrac{8}{3}\int_\R\phi_0'\phi_0^2(\zf_1|\zf|^2)'dx
:=I_{3,1} +I_{3,2}.
\ee
Using  \eqref{edoblack3}, the inequality  $0 < 1-\phi_0^2 \le 1-\phi_0^4$ and a Gagliardo-Nirenberg inequality we obtain
\be \label{estim3}
\begin{aligned}
|I_{3,1}|&\leq \tfrac{16}{9}\int_\R(1-\phi_0^2)^2(2+\phi_0^2)|\phi_0||\zf_1| |\zf|^2dx\\
&\leq \tfrac{16}{3}\|(1-\phi_0^2)^{2/3}\zf\|_{L^3}^3\\
&\leq \tfrac{16}{3}\|(1-\phi_0^2)^{1/2}\zf\|_{L^3}^3\\
&\lesssim \big\|(1-\phi_0^2)^{1/2}\zf\big\|^{5/2}_{L^2}\big\|\big((1-\phi_0^2)^{1/2}\zf\big)'\big\|^{1/2}_{L^2}\\
&\lesssim \big\|\eta_0^{1/2}\zf\big\|^{5/2}_{L^2}\big( \big\|\phi_0\phi'_0(1-\phi_0^2)^{-1/2}\zf\big\|_{L^2} +
\big\|(1-\phi_0^2)\zf'\big\|_{L^2}\big)^{1/2}\\
&\lesssim \big\|\zf\big\|_{\cH_0}^{5/2}\big( \big\|\phi_0\phi'_0(1-\phi_0^2)^{-1/2}\zf\big\|_{L^2} + \|\zf'\|_{L^2}\big)^{1/2}\\
&\lesssim \big\|\zf\big\|_{\cH_0}^{5/2}\big( \big\|(1-\phi_0^2)^{1/2}\zf\big\|_{L^2} + \|\zf'\|_{L^2}\big)^{1/2}\\
&\lesssim \big\|\zf\big\|_{\cH_0}^3, 
\end{aligned}
\ee
and in a similar way we deduce 
\be \label{estim4}
\begin{aligned}
|I_{3,2}|&\leq \tfrac{8}{3\sqrt{3}}\int_\R\phi_0^2(1-\phi_0^2)(2+\phi_0^2)^{1/2}\big|\zf_1'(3\zf_1^2 + \zf_2^2) + 2\zf_1\zf_2\zf_2'\big|dx\\
&\leq \tfrac{8}{3}\int_\R(1-\phi_0^2)\big|\zf_1'(3\zf_1^2 + \zf_2^2) + 2\zf_1\zf_2\zf_2'\big|dx\\
&\lesssim \int_\R(1-\phi_0^2) (|\zf_1'|+ |\zf_2'|)|\zf|^2dx\\
&\lesssim \| |\zf_1'|+ |\zf_2'|\|_{L^2} \|(1-\phi_0^2)^{1/2}\zf\|_{L^4}^2\\
&\lesssim \|\zf\|_{\cH_0}\|(1-\phi_0^2)^{1/2}\zf\|_{L^2}^{3/2}\|\big( (1-\phi_0^2)^{1/2}\zf\big)'\|_{L^2}^{1/2}\\
&\lesssim \|\zf\|_{\cH_0}^3.
\end{aligned}
\ee
Therefore, combining \eqref{estim1}, \eqref{estim1-a}, \eqref{estim2}, \eqref{estim3} and \eqref{estim4} we get, for some positive  number $\gamma$

\be\label{estim5}
\begin{aligned}
\tfrac{2}{3}\|\phi_0\rho_0\|_{L^2}^2&\ge \tfrac{2}{3}\|\phi_0^3\rho_0\|_{L^2}^2 + \tfrac{8}{3}\int_\R\eta_0\phi_0^4\zf_1^2dx - \gamma \|\zf\|_{\mathcal{H}_0}^3.
\end{aligned}
\ee
\noindent
Now, we consider the real function $\tilde{\zf}_1:= \zf_1-\langle \zf_1, e_1\rangle_0e_1$, where $e_1=\phi_0/\|\phi_0\|_{\cH_0}$. 
Then, using the first orthogonality condition in \eqref{ortogonalityBlack} we have 
$\langle \tilde{\zf}_1, e_0\rangle_0=\langle \tilde{\zf}_1, e_1\rangle_0 =0$. Thus, the expansion of $\tilde{\zf}_1$ is given by 
\[\tilde{\zf}_1=\sum_{n=2}^{+\infty}\langle \tilde{\zf}_1, e_n\rangle_0 e_n\quad \text{and}\quad Q_0[\zf_1]=Q_0[\tilde{\zf}_1].\]
Hence, from Step 1, it follows that 
\[
\begin{aligned}
Q_0[\zf_1] & \ge \lambda_2\|\zf_1-\langle \zf_1, e_1\rangle_0e_1\|_{\cH_0}^2=\lambda_2\|\zf_1\|_{\cH_0}^2-
\lambda_2\langle \zf_1, e_1\rangle_0^2.
\end{aligned}
\]
Now, for any number  $0< \nu <1$ which will be chosen later, using the identities \eqref{H0L2} and 
\[
\|\phi_0\|_{\cH_0}^2 =2 \|\phi_0'\|_{L^2}^2\quad \text{and} \quad \int_\R\eta_0dx=3,
\]
and combined with the  Cauchy-Schwarz inequality we have

\be\label{estim7-b}
\begin{aligned}
	Q_0[\zf_1] & \ge \lambda_2\|\zf_1\|_{\cH_0}^2-\lambda_2\langle \zf_1, e_1\rangle_0^2\\
	&= \lambda_2\|\zf_1'\|_{L^2}^2 + \lambda_2\|\eta_0^{1/2}\zf_1\|_{L^2}^2 \\
	&\hspace{2.5cm} -\frac{\lambda_2}{\|\phi_0\|_{\cH_0}^2}\Big((1-\nu)\int_{\R}\zf_1'\phi_0'dx + (1+\nu)\int_\R\eta_0\zf_1\phi_0 dx \Big)^2\\
	&\ge  \lambda_2\|\zf_1'\|_{L^2}^2 + \lambda_2\|\eta_0^{1/2}\zf_1\|_{L^2}^2\\
	&\hspace{2.5cm}-\frac{2 \lambda_2}{\|\phi_0\|_{\cH_0}^2}	
	\Big((1-\nu)^2\|\zf_1'\|_{L^2}^2\|\phi_0'\|_{L^2}^2+ 3(1+\nu)^2\int_\R\eta_0\zf_1^2\phi_0^2 dx \Big)\\
	&\ge \lambda_2\big(1- (1-\nu)^2\big)\|\zf_1'\|_{L^2}^2 +\lambda_2\|\eta_0^{1/2}\zf_1\|_{L^2}^2  
	-6\lambda_2\frac{(1+\nu)^2}{\|\phi_0\|_{\cH_0}^2}\int_\R\eta_0\zf_1^2\phi_0^2 dx.
\end{aligned}
\ee
Now,  using the estimates  $\phi_0^2 \le 1/4 + \phi_0^4$ and $\lambda_2< 1/2$ in   \eqref{estim7-b}, this allows us to 
select a positive constant $\Lambda_{\nu}$ such that

\be\label{estim7-c}
\begin{aligned}
Q_0[\zf_1]& \ge \lambda_2\big(1- (1-\nu)^2\big)\|\zf_1'\|_{L^2}^2 +\lambda_2\|\eta_0^{1/2}\zf_1\|_{L^2}^2\\ 
&\hspace{2.5cm}
-\frac{3\lambda_2(1+\nu)^2}{2\|\phi_0\|_{\cH_0}^2}\int_\R\eta_0\zf_1^2dx-\frac{6\lambda_2(1+\nu)^2}{\|\phi_0\|_{\cH_0}^2}\int_\R\eta_0\zf_1^2\phi_0^4 dx\\
&\ge\lambda_2\big(1- (1-\nu)^2\big)\|\zf_1'\|_{L^2}^2 +\lambda_2\left(1-\frac{3(1+\nu)^2}{2\|\phi_0\|_{\cH_0}^2}\right)\|\eta_0^{1/2}\zf_1\|_{L^2}^2\\ 
&\hspace{7.5cm}
-\frac{3(1+\nu)^2}{\|\phi_0\|_{\cH_0}^2}\int_\R\eta_0\zf_1^2\phi_0^4 dx\\
&\ge \Lambda_{\nu}\|\zf_1\|_{\cH_0}^2-\frac{3(1+\nu)^2}{\|\phi_0\|_{\cH_0}^2}\int_\R\eta_0\zf_1^2\phi_0^4 dx,\\
\end{aligned}
\ee
which holds under the restriction 
\be\label{estim7-d}
\frac{3(1+\nu)^2}{2\|\phi_0\|_{\cH_0}^2} < 1,
\ee
valid for small enough $\nu $  because $\|\phi_0\|_{\cH_0}^2 \approx 2.28> 3/2$ (see \eqref{H0L2}).

\medskip 
On the other hand, from Step 2  and \eqref{estim5}, we have 
\be\label{estim8}
\begin{aligned} 
 E_2[\phi_0 + \zf] - E_2[\phi_0] &\geq 2\lambda_2\|\zf_2\|_{\mathcal{H}_0}^2  + \tfrac{2}{3}\|\phi_0^3\rho_0\|_{L^2}^2
 + \tfrac{1}{3}\|\zf_2\rho_0\|_{L^2}^2\\
 &\hspace{4cm} +  2Q_0[\zf_1]+  \tfrac{8}{3}\int_\R\eta_0\phi_0^4\zf_1^2 - \gamma\|\zf\|_{\mathcal{H}_0}^3.
\end{aligned}
\ee
Then substituting \eqref{estim7-c} in \eqref{estim8} we obtain the estimate
\[E_2[\phi_0 + \zf] - E_2[\phi_0]\ge 2\Lambda_{\nu}\|\zf_1\|_{\mathcal{H}_0}^2 + 2\lambda_2\|\zf_2\|_{\mathcal{H}_0}^2  + \tfrac{2}{3}\|\phi_0^3\rho_0\|_{L^2}^2+ \tfrac{1}{3}\|\zf_2\rho_0\|_{L^2}^2-\gamma \|\zf\|_{\mathcal{H}_0}^3\]
which holds if	
\be\label{estim9}
\frac{6(1+\nu)^2}{\|\phi_0\|_{\cH_0}^2}<\frac83.
\ee
One can find such a $\nu$ because $\|\phi_0\|_{\cH_0}^2 \approx 2.28>\frac{9}{4}$.

\medskip 
Finally, since conditions  \eqref{estim7-d} and \eqref{estim9} are satisfied for a  small enough positive number $\nu$, 
there exists a universal constant $\Lambda_0$  verifying the inequality \eqref{expanE2}, and the proof is complete.
\end{proof}

{\color{black}
\begin{remark}
We remind that in the case of cubic GP treated in \cite{G-Smets}, the corresponding black soliton (denoted as $U_0$) satisfies the relation 
$U_0'=\frac{1}{\sqrt{2}}(1-U_0^2)$, and hence, the  orthogonality condition  
$$\langle f,1\rangle_0 = \int_\R(1-U_0^2)fdx= \sqrt{2}\int_{\R} U_0'fdx=0,$$
but in the case of the quintic GP \eqref{5gp}  this relation is not satisfied anymore. 
\end{remark}
}

Now we perturb the black soliton $\phi_0$ \eqref{black5gp} of \eqref{5gp}, with a function $u\in\mathcal{H}(\R)$ belonging to the orbit generated by the symmetries of
\eqref{5gp}, namely

\be\label{orbit}
 {\color{black}\mathcal{U}_{0}}(\alpha):=\Big\{w \in \mathcal{H}(\R): \inf_{(b,\iota)\in\R^2}||e^{-i\iota}w(\cdot + b) - \phi_0||_{\mathcal{H}_0(\R)} < \alpha  \Big\},
\ee
\noindent
for some $\alpha>0$  and then, given a function $u\in{\color{black}\mathcal{U}_{0}}(\al)$ we can choose $(c, \iota, b)\in (-2,2)\times \R^2$ in such a way that
\[
e^{-i\iota}u(\cdot + b) = \phi_c + \zf,
\]
with $\zf$ satisfying the orthogonality conditions \eqref{genortogonalityDark} around the dark soliton.

\medskip
Finally note that we can define the following tubular subset of ${\color{black}\mathcal{U}_{0}}(\al),$

\be\label{orbitV0}
\mathcal{V}_{0}(\alpha):=\Big\{v\in  \mathcal{Z}(\R): \inf_{(b,\iota)\in\R^2}d_0(e^{-i\iota}v(\cdot +b),\phi_0) < \alpha  \Big\}
\subset  {\color{black}\mathcal{U}_{0}}(\alpha).
\ee

\medskip 
\noindent
Coming back to the main question on the orbital stability of the black soliton, we use the coercivity of $E_2$ around the black soliton $\phi_0$ to small perturbations around the dark soliton $\phi_c$ (see Proposition \ref{anticor1}) . {\color{black} In fact, the idea to introduce a dark soliton family in this argument is to give an extra degree of freedom 
which allows us to satisfy the third constraint in \eqref{prop1} rewritten as \eqref{genortogonalityDark}.}
In that case, the situation is different with respect to the cubic GP equation, because we can not assume  the  cancellation of the linear term $\langle i\phi'_c, \zf\rangle_\C$ in our approach, given the orthogonality conditions arising naturally, from the particular structure of the associated spectral problem as we already saw in Proposition \ref{prop1}, e.g. \eqref{espectral4a}. In fact, this extra technical difficulty introduced by the linear term $\langle i\phi'_c, \zf\rangle_\C$ is overcome in Proposition \ref{anticor1}, by using a previously computed $L^2$ norm \eqref{L2-ddark-raizetac}.

\medskip
Before establishing the next result, and with  \eqref{complexproduct}, we fix the following notation:

\be\label{rho-c}
\rho_c(\zf):=|\phi_c+\zf|^2-|\phi_c|^2=2\langle \phi_c,\zf\rangle_\C + |\zf|^2 = 2\re(\phi_c\bar{\zf}) + |\zf|^2. 
\ee

\begin{proposition}[Coercivity of $E_2$ around the dark soliton]\label{anticor1}
There exists  $\cf \in(0,2)$ small enough \footnote{Note that $\cf$ is chosen as the minimum of the values that guarantee that some precise estimates in the proof {\color{black} hold for}, e.g. \eqref{estimate-integral-black-dark-normH0}, \eqref{estimateE2}.} such that the following holds. For all  $|c|\leq  \cf$ and for any  $\zf\in\mathcal{H}(\R)$ satisfying 
\medskip
\be\label{condittion-cor1}
{\color{black}\phi_c + \zf\in\mathcal{Z}(\R),\quad\text{with}\quad\|\rho_c(\zf)\|_{L^2}<C,\;\; }
\ee
for some constant $C$ and the generalized orthogonality conditions 
\begin{equation}\label{genortogonalityDark}
\int_\R\langle\eta_c,\zf \rangle_\C=0,\quad \int_\R\langle i\eta_c, \zf \rangle_\C=0\quad \text{and} \quad \int_\R\langle iR_c\eta_c,\zf \rangle_\C=0,
\end{equation}

\medskip 
\noindent 
there exists $\tilde{\Gamma}>0,$ not depending on $\cf,$ such that 
\be\label{genexpanE2}
E_2[\phi_c+\zf] - E_2[\phi_0]\geq  \tilde{\Gamma}\big(\|\zf\|_{\mathcal{H}_0}^2 + \|\phi_c^3\rho_c\|_{L^2}^2\big) - 
\frac{1}{ \tilde{\Gamma} }\big(c^2 + \|\zf\|_{\mathcal{H}_0}^3\big).
\ee
\end{proposition}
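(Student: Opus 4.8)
The plan is to expand $E_2$ directly around the dark soliton $\phi_c$, exploiting that $\phi_c$ solves \eqref{edodark}, and then to reduce the resulting quadratic form to the black-soliton coercivity already proved in Proposition \ref{prop1}. Proceeding as in the derivation of \eqref{E2expanBlack}, but replacing $\phi_0''+\eta_0\phi_0=0$ by the dark identity $\phi_c''+\eta_c\phi_c = ic\,\phi_c'$ coming from \eqref{edodark}, I would obtain
\[
E_2[\phi_c+\zf]-E_2[\phi_c] = -2c\int_\R\langle i\phi_c',\zf\rangle_\C\,dx + Q_c[\zf] + \mathcal{N}_c[\zf],
\]
where $Q_c[\zf]=\tfrac12\int_\R(|\zf_x|^2-\eta_c|\zf|^2)\,dx$ and $\mathcal{N}_c[\zf]=\int_\R(|\phi_c|^2\rho_c^2+\tfrac13\rho_c^3)\,dx$, with $\rho_c$ as in \eqref{rho-c}. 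In contrast with the cubic GP treated in \cite{G-Smets}, the linear term does not vanish, since $\phi_c$ is a travelling (hence non-stationary) profile and the orthogonality conditions \eqref{genortogonalityDark} are not designed to cancel it. For the constant term I would immediately use \eqref{darkenergyApprox2}, which gives $E_2[\phi_c]-E_2[\phi_0]\geq -2\sqrt3\,c^2$.

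The heart of the argument, and the step I expect to be the main obstacle, is the control of this residual linear term. Here I would invoke the localized representation $\phi_c'=\mfq_c\eta_c$ from \eqref{edoderdark} together with the uniform bound \eqref{L2-ddark-raizetac} on $\|\phi_c'/\sqrt{\eta_c}\|_{L^2}$: by Cauchy--Schwarz and the norm equivalence $\cH_c\equiv\cH_0$ of Lemma \ref{lemma-integral-black-dark-normH0},
\[
\Big|2c\int_\R\langle i\phi_c',\zf\rangle_\C\,dx\Big|\leq 2c\,\Big\|\tfrac{\phi_c'}{\sqrt{\eta_c}}\Big\|_{L^2}\|\eta_c^{1/2}\zf\|_{L^2}\lesssim c\,\|\zf\|_{\cH_0}.
\]
A Young inequality then absorbs this into $\varepsilon\|\zf\|_{\cH_0}^2+C_\varepsilon c^2$ for any prescribed $\varepsilon>0$, which is precisely the mechanism replacing the exact cancellation available in the integrable case.

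Next I would establish coercivity of $Q_c$ under \eqref{genortogonalityDark}. Writing $Q_c[\zf]=Q_0[\zf]+\tfrac12\int_\R(\eta_0-\eta_c)|\zf|^2\,dx$ and bounding the correction by $c^2\|\zf\|_{\cH_0}^2$ via \eqref{estimate-integral-black-dark-normH0}, the problem reduces to $Q_0$. The only subtlety is that $\zf$ obeys the $c$-conditions \eqref{genortogonalityDark} rather than the $\phi_0$-conditions \eqref{ortogonalityBlack}; but the three defects $\int_\R\eta_0\zf_1$, $\int_\R\eta_0\zf_2$, $\int_\R\phi_0\eta_0\zf_2$ equal respectively $\int_\R(\eta_0-\eta_c)\zf_1$, $\int_\R(\eta_0-\eta_c)\zf_2$ and $\int_\R(\phi_0\eta_0-R_c\eta_c)\zf_2$, each of which is $\mathcal{O}(c^2\|\zf\|_{\cH_0})$ by \eqref{new-maxCoercivity-a} and \eqref{new-maxCoercivity-b}. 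Inserting these near-orthogonalities into the spectral decomposition and the projection argument of Step 1 and Step 3 of Proposition \ref{prop1} — in particular using the positive nonlinear term $\tfrac83\int_\R\eta_0\phi_0^4\zf_1^2$ to control the unconstrained $\phi_0$-component of $\zf_1$ — would yield $Q_c[\zf]\geq \Lambda\|\zf\|_{\cH_0}^2 - Cc^2\|\zf\|_{\cH_0}^2 - C\|\zf\|_{\cH_0}^3$ for some fixed $\Lambda>0$.

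Finally, the nonlinear term $\mathcal{N}_c$ is bounded below exactly as in Step 2 of Proposition \ref{prop1}: the only new feature is the imaginary part of $\phi_c$ entering $\rho_c$, whose contributions are harmless because $\|I_c\|_{L^\infty}\lesssim c$ by \eqref{maxCoercivity-a}, and the passage from $\|\phi_c\rho_c\|_{L^2}$ to the required $\|\phi_c^3\rho_c\|_{L^2}$ is carried out through the Gagliardo--Nirenberg estimates of \eqref{estim1}--\eqref{estim5}. Collecting the four contributions, choosing $\varepsilon$ small in the linear estimate and then $\cf$ small enough that all the $\mathcal{O}(c^2\|\zf\|_{\cH_0}^2)$ errors are absorbed by the coercive term, produces the claimed bound \eqref{genexpanE2} with a suitable $\Gamma_{\cf}>0$. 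I expect the genuinely delicate points to be the linear term (paragraph two) and the transfer of orthogonality from the $c$-conditions to the $0$-conditions with $\mathcal{O}(c^2)$ precision (paragraph three).
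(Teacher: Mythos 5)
Your proposal is correct and follows essentially the same route as the paper: the same expansion around $\phi_c$ with the residual linear term handled by Cauchy--Schwarz via \eqref{L2-ddark-raizetac} and Young's inequality, the same reduction of $Q_c$ to $Q_0$ with $\mathcal{O}(c^2\|\zf\|_{\cH_0}^2)$ error through \eqref{estimate-integral-black-dark-normH0}, the same conversion of the $c$-orthogonality conditions into near-black-orthogonality with $\mathcal{O}(c^2\|\zf\|_{\cH_0})$ defects via \eqref{new-maxCoercivity-a}--\eqref{new-maxCoercivity-b} (which the paper implements explicitly by the split $\zf_1=\zf_1^*+\omega_1\eta_0$, $\zf_2=\zf_2^*+\omega_2\eta_0+\omega_3\phi_0\eta_0$ in \eqref{new-anticor1-a}--\eqref{new-anticor1-c}), and the same key absorption of the uncontrolled term $J[\zf_1]=\frac{3(1+\nu)^2}{\|\phi_0\|_{\cH_0}^2}\int_\R\eta_0\zf_1^2\phi_0^4$ by the positive quartic contribution $\tfrac83\int_\R\eta_c|\phi_c|^2R_c^2\zf_1^2$ extracted from $\mathcal{N}_c$ under the smallness condition $3(1+\nu)^2/\|\phi_0\|_{\cH_0}^2<\tfrac83$. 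You correctly singled out the two genuinely delicate points (the non-cancelling linear term and the orthogonality transfer), so the sketch fills in to the paper's proof.
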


\begin{remark}  Note that  the quadratic term $\|\zf_2\rho_c\|^2_{L^2}$ is not appearing in \eqref{genexpanE2} because the lower bound is already 
 guaranteed only with the current terms. Hereafter and for the sake of simplicity we will not include such a quadratic term  but note that keeping it, we would  recover \eqref{expanE2} in the limit $c\longrightarrow0$. 
\end{remark}

\begin{proof}
First of all, we remind that $\zf=\zf_1 + i\zf_2\in\mathcal{H}(\R)$ and that defining the quadratic form in $\zf$

\be\label{Qc}
Q_c[\zf]:= \frac{1}{2}\int_\R\Big(|\zf_x|^2 - \eta_c|\zf|^2\Big)dx,
\ee
\noindent
and the nonlinear term

\be\label{Nc}
\mathcal{N}_c[\zf]:= \int_{\R}\Big(|\phi_c|^2\rho_c^2  +  \frac{1}{3}\rho_c^3\Big)dx,
\ee
\noindent
we have 
\be\label{genexpanE2original}
\begin{aligned}
 E_2[\phi_c+\zf] - E_2[\phi_c] = -c\int_{\R}2\re(i\phi_c'\bar{\zf})dx + 2Q_c[\zf]+ \mathcal{N}_c[\zf].
\end{aligned}
\ee
\noindent
Also (see \eqref{darkenergyApprox}, \eqref{darkenergyOrder2} and \eqref{darkenergyApprox2}) we already know that for small $c$
\be\label{estimateE2}
 E_2[\phi_c] - E_2[\phi_0] =   -\frac{1}{4}\Big(3 + E_2[\phi_0]\Big)c^2 + \mathcal{O}(c^3) \geq -2\sqrt{3}c^2.
\ee

\noindent
We recall (see \eqref{E2expanBlack}) that

\be\label{estimateE3}
E_2[\phi_0+\zf] - E_2[\phi_0] =  2Q_0[\zf_1] + 2Q_0[\zf_2] + \mathcal{N}_0[\zf], 
\ee

\medskip 
\noindent
which implies
\begin{multline}\label{estimateE4}
E_2[\phi_c+\zf] - E_2[\phi_0] = \big( E_2[\phi_c+\zf] - E_2[\phi_0+\zf] \big) + 2Q_0[\zf_1] + 2Q_0[\zf_2] + \mathcal{N}_0[\zf].
\end{multline}

\medskip 
We begin by computing the first term on the r.h.s. of \eqref{estimateE4}. Note that subtracting \eqref{genexpanE2original} and \eqref{E2expanBlack}, we have
\begin{multline}\label{estimateE5}
 E_2[\phi_c+\zf] - E_2[\phi_0 + \zf] = E_2[\phi_c]- E_2[\phi_0]\\ -c\int_{\R}2\re(i\phi_c'\bar{\zf})dx + \int_{\R}\big(|\phi_c|^4  - \phi_0^4 \big)|\zf|^2dx + \Delta\mathcal{N}[\zf],
\end{multline}

\noindent 
where 
\be\label{restaeta23}
\Delta\mathcal{N}[\zf]:= \mathcal{N}_c[\zf] - \mathcal{N}_0[\zf] = \int_{\R}\big(|\phi_c|^2\rho_c^2 - \phi_0^2\rho_0^2\big)dx  
+  \int_{\R}\frac{1}{3}\big(\rho_c^3-\rho_0^3\big)dx.
\ee

\noindent
Substituting  \eqref{estimateE5}  in the r.h.s. of \eqref{estimateE4} we get

\begin{multline}\label{estimateE6}
E_2[\phi_c+\zf] - E_2[\phi_0] = 2Q_0[\zf_1] + 2Q_0[\zf_2] + \big(E_2[\phi_c]- E_2[\phi_0]\big)\\
  + \mathcal{N}_c[\zf] -c\int_{\R}2\re(i\phi_c'\bar{\zf})dx + \int_{\R}\big(|\phi_c|^4  - \phi_0^4\big)|\zf|^2 dx.
\end{multline}

\medskip 
\noindent
Hereinafter, unless otherwise noted, we shall consider the constant  $\cf$ as defined in \eqref{equivalence-norm-a}. Now we proceed
to estimate the last three terms in \eqref{estimateE6} 
and we begin by the last integral. Using  \eqref{estimate-integral-black-dark-normH0} we have 

\be\label{estimateE8}
\begin{aligned}
\left|\int_{\R}\big(|\phi_c|^4  - \phi_0^4\big)|\zf|^2 dx\right|
&\le \int_{\R}\left| |\phi_c|^4 - \phi_0^4\right | |\zf|^2dx\\
&  \le \beta c^2\|\zf\|_{\cH_0}^2,
\end{aligned}
\ee
\noindent
 for some positive constant $\beta$ and all $|c| \le \cf$.
\medskip 
Now we continue estimating  the linear term   $-c\displaystyle \int_{\R}2\re(i\phi_c'\bar{\zf})dx$. In fact, we use 
\eqref{equivalence-norm-a} and \eqref{L2-ddark-raizetac} to  get 
  \be\label{estimateE9}
  \begin{aligned}
  \left|-c\displaystyle \int_{\R}2\re(i\phi_c'\bar{\zf})dx\right|
  &\le 2|c|\left \|\frac{\phi_c'}{\sqrt{\eta_c}} \right \|_{L^2} \Big\|\sqrt{\eta_c}\,\zf \Big\|_{L^2}\\
  & \le \beta |c|\,\|\zf \|_{\cH_0},
  \end{aligned}
  \ee 
  
  \medskip
  \noindent
 with a larger constant $\beta$  if necessary, and all $|c| \le \cf$. Now we estimate the nonlinear term  $\cN_c[\zf]$. Using that $|\re(\phi_c\bar{\zf})|  \le \frac{|\phi_c|^2 + |\zf|^2}{2}$
we get 
   \be\label{estimateE10}
     \begin{aligned}
    \cN_c[\zf]& = \int_{\R}|\phi_c|^2\rho_c^2dx  +  \frac{1}{3}\int_{\R}\big( 2\re(\phi_c\bar{\zf}) + |\zf|^2 \big)\rho_c^2dx\\
     &\ge \int_{\R}|\phi_c|^2\rho_c^2dx  + \frac{1}{3}\int_{\R}|\zf|^2\rho_c^2dx - \frac{2}{3}\left|\int_{\R}\re(\phi_c\bar{\zf})\rho_c^2dx \right|\\
     & \ge \frac{2}{3}\int_{\R}|\phi_c|^2\rho_c^2dx.
     \end{aligned}
     \ee 
 
 \medskip
 \noindent
Combining \eqref{estimateE2}, \eqref{estimateE6}, \eqref{estimateE8}, \eqref{estimateE9} and \eqref{estimateE10} and Young's inequality we obtain 

\begin{equation}\label{estimateE11}
\begin{split}
E_2[\phi_c+\zf]- E_2[\phi_0] &\ge 2Q_0[\zf_1] + 2Q_0[\zf_2]\\ 
&\quad \quad -2\sqrt{3}c^2 + \frac{2}{3}\|\phi_c\rho_c\|_{L^2}^2 -\beta|c|\,\|\zf \|_{\cH_0} 
- \beta c^2\|\zf\|_{\cH_0}^2\\
&\ge 2Q_0[\zf_1] + 2Q_0[\zf_2]\\ 
&\quad \quad -2\sqrt{3}c^2 + \frac{2}{3}\|\phi_c\rho_c\|_{L^2}^2 -\beta_1c^2 - \beta_2\|\zf \|^2_{\cH_0}
- \beta c^2\|\zf\|_{\cH_0}^2,
\end{split}
\end{equation}
\noindent
for all $|c| \le \cf$ and $\beta_2$ to be fixed later. Now we split the components of the perturbation  $\zf =\zf_1 + i \zf_2$ in the following way
\begin{equation}\label{new-anticor1-a}
\begin{aligned}
&\zf_1 = \zf_1^*  +  \omega_1(c) \eta_0\\
&\zf_2 = \zf_2^*  +  \omega_2(c) \eta_0 + \omega_3(c)\phi_0\eta_0,
\end{aligned}
\end{equation}
with $\omega_1$, $\omega_2$ and $\omega_3$  real-valued  functions chosen so that  $\zf^* := \zf_1^* + i \zf_2^*$ satisfies the orthogonality conditions
in \eqref{ortogonalityBlack}.  Thus, using \eqref{genortogonalityDark}, the functions $\omega_i$ satisfy the relations
\begin{equation}\label{new-anticor1-b}
\begin{aligned}
&\int_\R\langle\eta_0-\eta_c,\zf \rangle_\C dx=\int_\R\langle\eta_0,\zf \rangle_\C dx = \omega_1(c)\int_{\R}\eta_0^2dx,\\
&\int_\R\langle i\eta_0-i\eta_c,\zf \rangle_\C dx =\int_\R\langle i\eta_0,\zf \rangle_\C dx=\omega_2(c)\int_{\R}\eta_0^2dx + \omega_3(c)\int_{\R}\phi_0\eta_0^2dx,\\
&\int_\R\langle i\phi_0\eta_0-iR_c\eta_c,\zf \rangle_\C dx = \int_\R\langle i\phi_0\eta_0,\zf \rangle_\C dx=\omega_2(c)\int_{\R}\phi_0\eta_0^2dx 
+ \omega_3(c)\int_{\R}\phi_0^2\eta_0^2dx,
\end{aligned}
\end{equation}
\noindent
and the system has a solution, because it has a nonvanishing determinant\footnote{Note that for parity reasons $\int_{\R}\phi_0\eta_0^2dx=0.$}.

\medskip
\noindent
Now, using \eqref{new-maxCoercivity-a} and \eqref{new-maxCoercivity-b}, the integrals in the left hand of \eqref{new-anticor1-b} can be estimated as follows

\[\Big|\int_\R\langle\eta_0-\eta_c,\zf \rangle_\C dx\Big| + \Big|\int_\R\langle i\eta_0-i\eta_c,\zf \rangle_\C dx\Big| + 
\Big|\int_\R\langle i\phi_0\eta_0-iR_c\eta_c,\zf \rangle_\C dx\Big| \lesssim  c^2\|\sqrt{\eta_0}\zf \|_{L^2} \lesssim  c^2\|\zf \|_{\cH_0}, 
\]

\medskip 
\noindent 
and therefore there exists a positive constant $\beta_3$ such that 

\begin{equation}\label{new-anticor1-c}
|\omega_1(c)| + |\omega_2(c)| + |\omega_3(c)| \leq \beta_3c^2\|\zf\|_{\cH_0}. 
\end{equation}

\medskip 
\noindent
Notice that we can obtain the following estimates, by using Step 1 of Proposition \ref{prop1} applied to $\zf_1^*$ and $\zf_2^*$, and combined with \eqref{new-anticor1-c}, 
\begin{equation}\label{new-anticor1-d}
\begin{split}
Q_0[\zf_1]& = Q_0[\zf_1^*] - \omega_1^2Q_0[\eta_0] + \omega_1\Big(\int_{\R}\zf_1'\eta_0'dx  - \int_{\R}\zf_1\eta_0^2dx\Big)\\
& \geq Q_0[\zf_1^*]-\omega_1^2|Q_0[\eta_0]| -|\omega_1|\big(\|\zf_1'\|_{L^2}\|\eta_0'\|_{L^2} + \|\zf_1\sqrt{\eta_0}\|_{L^2}\big\|\eta_0^{\frac32}\big\|_{L^2}\big)\\
& \geq Q_0[\zf_1^*]  -\beta_4 c^2\|\zf\|^2_{\cH_0},
\end{split}
\end{equation}

\noindent
where $Q_0[\zf_1^*]\ge 0$, $\beta_4$ is a  positive constant and  $|c| < \cf<1$. Analogously, since $\zf_2^*$ verifies the inequality  $Q_0[\zf_2^*] \ge \Lambda_1\|\zf^*_2\|^2_{\cH_0}$, we deduce that 

\begin{equation}\label{new-anticor1-e}
Q_0[\zf_2] \geq  \Lambda_1\|\zf_2\|^2_{\cH_0} - \beta_4c^2\|\zf\|^2_{\cH_0}.
\end{equation}

\medskip 
\noindent 
for a larger $\beta_4$ if necessary and for all $|c| < \cf<1$. 

\medskip 
On the other hand, since $\zf^*_1$ is orthogonal to $\eta_0$ in $L^2$, the same arguments used to obtain \eqref{estim7-c} in Step 3
of Proposition \ref{prop1} allow us to conclude the existence of positive $\Lambda_{\nu}$ such that 

\begin{equation}\label{new-anticor1-f}
\begin{split}
Q_0[\zf_1^*] &\geq \Lambda_\nu\|\zf^*_1\|_{\cH_0}^2-\frac{3(1+\nu)^2}{\|\phi_0\|_{\cH_0}^2}\int_\R\eta_0{\zf_1^*}^2\phi_0^4 dx,\\
&\geq \Lambda_{\nu}\|\zf_1\|_{\cH_0}^2 -\beta_5c^2\|\zf\|^2_{\cH_0}-\underbrace{\frac{3(1+\nu)^2}{\|\phi_0\|_{\cH_0}^2}\int_\R\eta_0{\zf_1}^2\phi_0^4 dx}
_{J[\zf_1]}
\end{split}
\end{equation}
with $0< \nu <1$ such that $\frac{3(1+\nu)^2}{2\|\phi_0\|_{\cH_0}^2} < 1.$ 

\medskip
Now, combining \eqref{new-anticor1-d}, \eqref{new-anticor1-e} and \eqref{new-anticor1-f} we have

\begin{equation}\label{new-anticor1-g}
Q_0[\zf_1] + Q_0[\zf_2] \geq \Lambda_{\nu}\|\zf_1\|_{\cH_0}^2 + \Lambda_1\|\zf_2\|^2_{\cH_0} - (2\beta_4+\beta_5)c^2\|\zf\|^2_{\cH_0}-J[\zf_1]
\end{equation}

\medskip 
\noindent
and substituting \eqref{new-anticor1-g} in \eqref{estimateE11} it follows that 

\begin{equation}\label{new-anticor1-h}
\begin{split}
E_2[\phi_c+\zf]- E_2[\phi_0] & \ge 2\Lambda_{\nu}\|\zf_1\|_{\cH_0}^2 + 2\Lambda_1\|\zf_2\|^2_{\cH_0}\\ 
&\quad \quad + \frac{2}{3}\|\phi_c\rho_c\|^2_{L^2}-2J[\zf_1]- \beta_2\|\zf \|^2_{\cH_0} -(\beta_1+2\sqrt{3})c^2 
-\beta_6 c^2\|\zf\|_{\cH_0}^2 ,
\end{split}
\end{equation}

\medskip 
\noindent 
where $\beta_6=\beta+4\beta_4 + 2\beta_5$. At this point, to control the effect of $J[\zf_1]$ on the lower bound of \eqref{new-anticor1-h},  
we shall proceed in a similar way as in \eqref{estim1} - \eqref{estim1-a}, estimating the following $L^2$ norm:

\be\label{new-anticor1-i}
\begin{split}
\frac{2}{3}\|\phi_c\rho_c\|_{L^2}^2& \ge \frac{2}{3}\|\phi_c^3\rho_c\|_{L^2}^2 + \frac{8}{3}\int_{\R}\eta_0\phi_0^4\zf_1^2dx + \frac{8}{3}\underbrace{\int_{\R}\big(\eta_c|\phi_c|^2R_c^2-\eta_0\phi_0^4\big)\zf_1^2dx}_{J_a[\zf]}\\
&\qquad - \frac{8}{3}\underbrace{\int_{\R}\eta_c|\phi_c|^2|R_c\zf_1 + I_c\zf_2||\zf|^2dx}_{J_b[\zf]}- \frac{16}{3}
\underbrace{\int_{\R}\eta_c|\phi_c|^2|R_cI_c\zf_1\zf_2|dx}_{J_c[\zf]},
\end{split}
\ee

\noindent
where $R_c,~I_c$ are defined in \eqref{real-dark} and \eqref{imaginary-dark}. Now, using that $|\phi_c|\le 1, $ \eqref{maxCoercivity-a} and \eqref{equivalence-norm-a} one gets 
\be\label{est-J2}
|J_c[\zf]|\lesssim c \|\zf\|^2_{\cH_0}.
\ee

\medskip 
\noindent
On the other hand, since $|\phi_c|\le 1$ we have

\begin{equation}\label{new-anticor1-i-01}
\begin{split}
J_b[\zf]& \le \int_{\R}\eta_c|\phi_c|^3|\zf|^3dx\\
&\le \int_{\R}\big(\eta_c|\phi_c|^2 - \eta_0\phi_0^2\big)|\zf|^3dx +  \int_{\R}\eta_0\phi_0^2|\zf|^3dx\\
&:= J_{b1}[\zf] + J_{b2}[\zf].
\end{split}
\end{equation}
Note that 
\[J_{b2}[\zf] = -\int_{\R}\phi_0''\phi_0|\zf|^3dx;\]
so in a similar way as for the integral $I_3$ in \eqref{estim2} we have
\begin{equation}\label{new-anticor1-i-01-a}
|J_{b2}[\zf]|\lesssim \|\zf\|_{\cH_0}^3.
\end{equation}
%
{\color{black}Remembering that (see Appendix \ref{0b} for details)

\be\label{LinfinitoPertubationEstimateOK}
 \|\zf\|_{L^\infty}\lesssim(1+\|\rho_c\|_{L^2})(1+\|\zf\|_{\mathcal{H}_0}),
\ee
\noindent
then using \eqref{condittion-cor1}, we obtain

\be\label{LinfinitoPertubationEstimate}
\|\zf\|_{L^\infty}\lesssim 1 + \|\zf\|_{\mathcal{H}_0},
\ee
\noindent
}
and finally by \eqref{new-maxCoercivity-aL2-1},  we obtain 
\begin{equation}\label{new-anticor1-i-01-b}
\begin{split}
|J_{b1}[\zf]|& \lesssim {\color{black}(1 + \|\zf\|_{\mathcal{H}_0})^2}\int_{\R}\frac{\eta_c|\phi_c|^2 - \eta_0\phi_0^2}{\sqrt{\eta_0}}\sqrt{\eta_0}|\zf|dx\\
& \lesssim c^2{\color{black}(\|\zf\|_{\mathcal{H}_0}+\|\zf\|_{\mathcal{H}_0}^3).}
\end{split}
\end{equation}
Following the same procedure as in estimate $J_{b1}[\zf]$  and using \eqref{new-maxCoercivity-aL2-2} we get 
\begin{equation}\label{new-anticor1-i-01-c}
|J_a[\zf]|\lesssim c^2{\color{black}(\|\zf\|_{\mathcal{H}_0}+\|\zf\|_{\mathcal{H}_0}^3).}
\end{equation}
Therefore, collecting estimates $J_{a},J_{b}, J_{c},$ we conclude that
\begin{equation}\label{new-anticor1-j}
\frac{2}{3}\|\phi_c\rho_c\|_{L^2}^2 \ge \frac{2}{3}\|\phi_c^3\rho_c\|_{L^2}^2 + \frac{8}{3}\int_{\R}\eta_c|\phi_c|^2R_c^2\zf_1^2dx
- \gamma_1\|\zf\|_{\cH_0}^3 - \gamma_2(c+c^2)\|\zf\|^2_{\cH_0} -\beta_7c^2, 
\end{equation}
for some positive numbers $\gamma_1, \gamma_2, \beta_7$. Finally, we fix $\beta_2$  such that $\beta_2 < \min(\Lambda_1, \Lambda_{\nu})$ and  $0< \nu <1$, smaller if necessary,  such that

\[\frac{6(1+\nu)^2}{\|\phi_0\|_{\cH_0}^2} < \frac{8}{3}. \] 

\noindent 
Then, substituting \eqref{new-anticor1-j} into \eqref{new-anticor1-h}, the second term on the right hand side of  \eqref{new-anticor1-j} allows to control the integral $J[\zf_1]$ and  consequently we can take a positive constant $\Gamma_{\cf}$ such that
\begin{equation*}
E_2[\phi_c+\zf]- E_2[\phi_0]\geq \Gamma_{\cf}\big(\|\zf\|_{\mathcal{H}_0}^2 + \|\phi_c^3\rho_c\|_{L^2}^2\big) - 
\frac{1}{\Gamma_{\cf}}\big(c^2 + \|\zf\|_{\mathcal{H}_0}^3\big),
\end{equation*}
\noindent
for all $|c| < \cf$. 

\medskip
Notice that in the process of obtaining the constant $\Gamma_{\cf}$ we see that this coercivity constant is lower bounded by a constant  $\tilde{\Gamma}$ when  $\cf \to 0$. In other words, $\Gamma_{\cf} \ge \tilde{\Gamma}$ and   $-\frac{1}{\Gamma_{\cf}} \ge -\frac{1}{\tilde{\Gamma}}$ for all $\cf$ in a small interval $(0, \tilde{\gamma}$). Hence, we get 
	\bee
	E_2[\phi_c+\zf] - E_2[\phi_0]\geq \tilde{\Gamma}\big(\|\zf\|_{\mathcal{H}_0}^2 + \|\phi_c^3\rho_c\|_{L^2}^2\big) - 
	\frac{1}{ \tilde{\Gamma}}\big(c^2 + \|\zf\|_{\mathcal{H}_0}^3\big)
	\eee
as claimed in \eqref{genexpanE2}.
\end{proof}

\section{Modulation of parameters}\label{Sec4}
In order to apply the coercivity property of the {\color{black}quintic} Ginzburg-Landau energy $E_2$ shown in Section \ref{Sec3}, we have to ensure that the orthogonality relations hold. 

\medskip 
In this section, we  prove that there exist small perturbations $\zf\in\mathcal{H}(\R)$ such that the orthogonality conditions \eqref{ortogonalityBlack}
for the black soliton are satisfied. In fact, we will prove a more general result, valid for $c\neq0,$ and dealing with 
\emph{generalized orthogonality conditions} \eqref{genortogonalityDark} for perturbations $\zf\in\mathcal{H}(\R)$  around the dark soliton and therefore obtaining the desired orthogonality conditions related with the black soliton \eqref{ortogonalityBlack} in the limit {\color{black}$c=0$}. 

\medskip
Firstly, and for the sake of completeness, we will present a preliminary result on the continuous dependence for the shift and phase parameters $b,~\theta$ on 
the corresponding  dark soliton profile.

\begin{lemma}\label{lemma2}
	Let $(c,a,\theta)\in (-\cf,\cf)\times\R^2$ and set $\phi_{c,a,\theta}:=e^{i\theta}\phi_c(\cdot-a).$ Given a positive number $\delta$, 
	there exists a positive number $\tilde{\delta}$ such that if 
	\[\|\phi_{c,b_1,\theta_1}-\phi_{c,b_2,\theta_2}\|_{\mathcal{H}_c}<\tilde{\delta},\]
	then we have \,\, $|b_2-b_1| + |e^{i\theta_2}-e^{i\theta_1}|<\delta.$
\end{lemma}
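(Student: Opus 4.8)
The plan is to reduce the statement to a compactness argument for the \emph{differences} of the parameters. Write $g := \phi_{c,b_1,\theta_1} - \phi_{c,b_2,\theta_2}$. Since $\eta_c\ge 0$ (because $\|\phi_c\|_{L^\infty}=1$ by \eqref{LinftyDark}), the Dirichlet part of the norm is dominated by the full norm,
\[
\int_\R |g'|^2\,dx \le \|g\|_{\mathcal{H}_c}^2,
\]
so it suffices to bound $|b_2-b_1| + |e^{i\theta_2}-e^{i\theta_1}|$ by a function of the left-hand side that vanishes as it tends to $0$. First I would expand this Dirichlet term and change variables $y=x-b_2$ to see that it depends only on $d:=b_1-b_2$ and $\omega:=\theta_1-\theta_2$, namely
\[
\int_\R |g'|^2\,dx = 2\|\phi_c'\|_{L^2}^2 - 2\re\big(e^{i\omega}C(d)\big)=:\mathcal{G}(d,\omega),\qquad C(d):=\int_\R \phi_c'(y-d)\overline{\phi_c'(y)}\,dy.
\]
Since $|e^{i\theta_2}-e^{i\theta_1}|=|e^{i\omega}-1|$, the conclusion is equivalent to controlling $|d|+|e^{i\omega}-1|$ by $\mathcal{G}(d,\omega)$.

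Next I would record three properties of $\mathcal{G}$ on the cylinder $\R\times(\R/2\pi\Z)$. First, $\mathcal{G}$ is continuous, which follows from the strong continuity of translations and phase rotations on $L^2$ together with the continuity of $d\mapsto C(d)$. Second, $\mathcal{G}(d,\omega)=0$ if and only if $d=0$ and $e^{i\omega}=1$: indeed $\mathcal{G}(d,\omega)=0$ forces $e^{i\omega}\phi_c'(\cdot-d)=\phi_c'$ in $L^2$, hence $|\phi_c'(\cdot-d)|=|\phi_c'|$ pointwise; but $|\phi_c'|$ is even and exponentially decaying (read off from \eqref{edoderdark}--\eqref{pesodark}), so $|\phi_c'(\cdot-d)|=|\phi_c'|$ makes $|\phi_c'|$ a continuous $d$-periodic function tending to $0$ at infinity, which is impossible unless $d=0$; evaluating $e^{i\omega}\phi_c'=\phi_c'$ at a point where $\phi_c'\neq 0$ then gives $e^{i\omega}=1$. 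Third, $\mathcal{G}(d,\omega)\to 2\|\phi_c'\|_{L^2}^2>0$ as $|d|\to\infty$, uniformly in $\omega$, because $|C(d)|\le \|\phi_c'(\cdot-d)\|_{L^2}\|\phi_c'\|_{L^2}$ and the autocorrelation $C(d)\to 0$ as $|d|\to\infty$ by the exponential localization of $\phi_c'$.

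With these in hand I would conclude by contradiction. Fix $\delta>0$ and suppose no admissible $\tilde\delta$ exists; then there are sequences of parameters with $\|g_n\|_{\mathcal{H}_c}\to 0$, hence $\mathcal{G}(d_n,\omega_n)\to 0$, while $|d_n|+|e^{i\omega_n}-1|\ge\delta$. The third property keeps $(d_n)$ bounded (otherwise $\mathcal{G}(d_n,\omega_n)$ would stay near $2\|\phi_c'\|_{L^2}^2$), and the phases lie in the compact circle, so after extraction $(d_n,\omega_n)\to(d_\ast,\omega_\ast)$ with $|d_\ast|+|e^{i\omega_\ast}-1|\ge\delta$; continuity gives $\mathcal{G}(d_\ast,\omega_\ast)=0$, contradicting the second property. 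The main obstacle I anticipate is the second property, specifically ruling out a nonzero shift $d$: this is exactly where one must exploit that $|\phi_c'|$ is an even, exponentially decaying profile so that no nontrivial translation can preserve it, while the saturation of $\mathcal{G}$ at infinity is the ingredient that tames the noncompact translation direction.
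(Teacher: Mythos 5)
Your proof is correct, and it is genuinely more than what the paper does: the paper's ``proof'' of this lemma is a one-line deferral to \cite[Lemma 2.1]{G-Smets}, whereas you give a complete, self-contained argument. Your key simplification --- discarding the weighted part of the norm, which is legitimate since $\eta_c=1-|\phi_c|^4\ge 0$ by \eqref{LinftyDark}, and reducing everything to the autocorrelation identity $\mathcal{G}(d,\omega)=\|e^{i\omega}\phi_c'(\cdot-d)-\phi_c'\|_{L^2}^2$ --- is clean, and all three claimed properties of $\mathcal{G}$ check out: continuity is standard for $L^2$ autocorrelations, the decay $C(d)\to 0$ follows from the exponential localization of $\phi_c'$ visible in \eqref{edoderdark}, and $\|\phi_c'\|_{L^2}>0$ since $\phi_c$ is non-constant. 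One small remark on your ``main obstacle'': evenness of $|\phi_c'|$ is not actually needed to rule out a nonzero shift; iterating $|\phi_c'(\cdot-d)|=|\phi_c'|$ already makes $|\phi_c'|$ a $|d|$-periodic continuous function, and periodicity together with decay at infinity and nontriviality forces $d=0$, so your argument is in fact slightly more robust than you advertise. Two points of comparison with the cited route: first, the Gravejat--Smets-style argument works with the full soliton profile in the weighted norm, while your reduction to the derivative alone buys elementarity at no cost here because $\phi_c'\in L^2$ with explicit norm \eqref{gradienteL2}; second, your compactness-by-contradiction argument produces a $\tilde{\delta}$ depending on the fixed $c$, which is exactly what the statement requires, and if one later wanted uniformity over $|c|\le \cf$ (as is implicitly convenient in the modulation analysis of Proposition \ref{prop2a}) this is easily restored by noting that $\|\phi_c'\|_{L^2}$ is bounded below and $C(d)$ depends continuously on $c$ on the compact parameter range.
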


\bp
The proof runs exactly as \cite[Lemma 2.1]{G-Smets}. 
\ep

%
%

\medskip

\begin{proposition}[Modulation]\label{prop2a}
	Let  $(c, a, \theta)\in (-\cf, \cf)\times\R^2$. There exist two positive numbers $\tilde{r}_c$ and $\tilde{s}_c$, depending continuously on $c$, for which there exist a map  $(\widetilde{c},~\widetilde{a},~\widetilde{\theta}): B_{{\mathcal{H}_0}} (\phi_{c,a,\theta};\,\tilde{r}_c)\to (-\cf, \cf)\times\R^2$ with $(\widetilde{c},~\widetilde{a},~\widetilde{\theta})(\phi_{c, a, \theta})=(c, a, \theta)$ 
	and such that for any $w\in  B_{{\mathcal{H}_0}} (\phi_{c,a,\theta};\,\tilde{r}_c)$ the perturbation of the dark soliton profile
	
	\be\label{pertProp2}
	\zf:=e^{-i\widetilde{\theta}(w)}w(\cdot + \widetilde{a}(w))-\phi_{\widetilde{c}(w)}, 
	\ee
	
	\medskip
	\noindent
	satisfies the generalized orthogonality conditions \eqref{genortogonalityDark}.
	Moreover, $\widetilde{c},~\widetilde{a},~\widetilde{\theta}$ are $C^1$-functions in $B_{{\mathcal{H}_0}} (\phi_{c,a,\theta},\,\tilde{r}_c)$ and  given any 
	$w\in  B_{{\mathcal{H}_0}} (\phi_{c,a,\theta};\,\tilde{r}_c)$,  the vector $(\widetilde{c},~\widetilde{a},~\widetilde{\theta})(w)$ is the unique element in the ball
	$B((c, a, \theta);\tilde{s}_c)\subset \R^3$ verifying \eqref{genortogonalityDark}.
\end{proposition}

\medskip 
\begin{proof}[Proof of Proposition \ref{prop2a}.]
	The proof of this result is a classical application of the Implicit Function Theorem. We begin by considering the functional 
	\[F:\mathcal{H}\times(-\cf, \cf)\times\R^2\longrightarrow\R^3,\]
	given by
	
	\begin{equation}\label{implicitFunction}
		F(w,\si,b,\iota):=
		\Big(\int_\R\langle\eta_\si, \zf \rangle_\C,\quad\int_\R\langle i\eta_\si,  \zf\rangle_\C, \quad\int_\R\langle iR_\si\eta_\si,  \zf \rangle_\C\Big),
	\end{equation}
	
	\noindent
	where  $\zf:=e^{-i\iota}w(\cdot + b)-\phi_{\si}$.
	
	\medskip 
	Notice that, similarly to the context of the cubic  GP \cite{G-Smets}, the functional $F$  has $C^1$-regularity.  Recall now the notation introduced in
	Lemma \ref{lemma2}
	\begin{equation}\label{phi-tilde}
	\phi_{c,a,\theta}:=e^{i\theta}\phi_c(\cdot-a).
	\end{equation}

	\noindent 
	Then 
	$$F(\phi_{c,a,\theta},c,a,\theta)={\bf0},\quad \text{for all}\quad (c,a,\theta)\in (-\cf, \cf)\times\R^2,$$
	
	\medskip 
	\noindent 
	where ${\bf0}:=(0,0,0)$. On the other hand, we have that\footnote{By parity arguments, some of the terms vanish in \eqref{DerivadasIF}.}
	
	\begin{equation}\label{DerivadasIF}
		\begin{aligned}
			&\partial_\si F(\phi_{c,a,\theta},c,a,\theta) = 
			\Big(0,\,\int_\R \langle i \eta_{c},-\partial_c\phi_c\rangle_\C,\, 0\Big),
			&\\
			&\partial_b F(\phi_{c,a,\theta},c,a,\theta) = 
			\Big(\int_\R\langle  \eta_{c},\, \partial_x\phi_c\rangle_{\C},\, 0\, ,\int_\R \langle iR_{c} \eta_{c},\, \partial_x\phi_c\rangle_{\C}\Big),
			&\\
			&\partial_\iota F(\phi_{c,a,\theta},c,a,\theta) = 
			\Big(\int_\R\langle  \eta_{c},-i\phi_c\rangle_{\C},\, 0\, ,\int_\R \langle iR_{c} \eta_{c},-i\phi_c\rangle_{\C}\Big).                     
		\end{aligned}
	\end{equation}
	
	\medskip
	Let $\mathcal{F}(c)$ be the $3\times3$ matrix  
	$\mathcal{F}(c):=(\partial_{\si} F,\partial_b F,\partial_{\iota} F)(\phi_{c,a,\theta}
	,c,a,\theta)$,  
	which is a continuously diffe\-rentiable function on the interval  $c\in(-\cf, \cf)$. 
	
	\medskip 
	From \eqref{DerivadasIF}, we have that 
	for all  $ c\in(-\cf, \cf)$ (see Appendix \ref{Ap1} for a detailed computation of $det\mathcal{F}(c)$)
	
	\begin{equation}\label{DetFI}
		det\mathcal{F}(c) = -\int_\R \langle i \eta_{c},-\partial_c\phi_c\rangle_\C \times  \mathcal{D}(c) \neq0,
	\end{equation}
	where
	\[
	\mathcal{D}(c)=
	\Big(\int_\R\langle  \eta_{c},\partial_x\phi_c\rangle_{\C}\int_\R \langle iR_{c} \eta_{c},-i\phi_c\rangle_{\C}
	-\int_\R\langle  \eta_{c},-i\phi_c\rangle_{\C}\int_\R \langle iR_{c} \eta_{c},\partial_x\phi_c\rangle_{\C}\Big).
	\]
	
	\medskip 
	\noindent
	
	Therefore, by the Implicit Function Theorem, there exists a neighborhood 
	$$B_{{\mathcal{H}_0}} (\phi_{c,a,\theta};\,\tilde{r}_c)\times B((c, a, \theta);\tilde{s}_c)\subset \mathcal{H}\times ((-\cf, \cf)\times\R^2)$$
    and a unique $C^1$ map $(\widetilde{c},~\widetilde{a},~\widetilde{\theta}): B_{{\mathcal{H}_0}} (\phi_{c,a,\theta};\,\tilde{r}_c)\to B((c, a, \theta);\tilde{s}_c)$ 
	such that 
		\[
		F(w,\widetilde{c}(w),\widetilde{a}(w),\widetilde{\theta}(w))={\bf0},
		\]
	for any $w\in B_{{\mathcal{H}_0}} (\phi_{c,a,\theta};\,\tilde{r}_c)$, and consequently we get \eqref{genortogonalityDark}.
\end{proof}	

{\color{black}
Before establishing the next result, we remember  the neighborhood \eqref{orbit} of the orbit of $\phi_0$,

\be\label{orbit2}
\mathcal{U}_{0}(\alpha):=\Big\{w \in\mathcal{H}(\R): \inf_{(b,\iota)\in \R^2}||e^{-i\iota}w(\cdot +b) - \phi_0||_{\mathcal{H}_0(\R)} < \alpha\Big\},
\ee

\medskip
\noindent
where we split $e^{-i\iota}w(\cdot +b) = \phi_c + \zf$. By taking $\alpha$ smaller, if necessary, we can apply a well known 
standard theory of modulation for the solution  $u(\cdot)\in{\color{black}\mathcal{U}_{0}}(\al)$ of the Cauchy problem \eqref{5gp}.
}

\begin{corollary}\label{1stParProp2}
Let $\tilde{r}_0$ and $\tilde{s}_0$ be the constants established in Proposition \ref{prop2a} for the case $c=0$, chosen in such a way that $\tilde{r}_0\tilde{s}_0<1$. There exists $\alpha >0$ such that for a given $w\in {\color{black}\mathcal{U}_{0}}(\alpha)$ there exist numbers $a_w$ and $\theta_w$ such that 
\[w\in  B_{{\mathcal{H}_0}} (\phi_{0,a_w,\theta_w};\,\tilde{r}_0/2)\]
and the map $(\tilde{c}, \tilde{a}, \tilde{\theta})$ established in Proposition \ref{prop2a} in each ball  $B_{{\mathcal{H}_0}} (\phi_{0,a_w,\theta_w};\,\tilde{r}_0/2)$  is well defined from the neighborhood ${\color{black}\mathcal{U}_{0}}(\alpha)$ with values in $\R^2\times \R/2\pi$. More precisely, the functions $\widetilde{c}(w),\widetilde{a}(w)$ and $\widetilde{\theta}(w)$ (modulo $2\pi$) 
do not depend on which  $(a,\theta)$ parameters are chosen.
\end{corollary}
\begin{proof}
Taking $\alpha\le \alpha_0: = \min\big\{\tilde{r}_0/2,\,\tilde{\delta}/4 \big\}$ (with $\tilde{\delta}$ provided in Lemma \ref{lemma2} in the case $c=0$), the proof follows in a similar way 
as it was done in the first part of the Step 2 in the proof of \cite[Proposition 2]{G-Smets}.
\end{proof}

\medskip 

\begin{corollary}\label{0stParProp2}
Consider $\alpha$ as in Corollary \ref{1stParProp2} and let $u(t,\cdot)$ be the solution of \eqref{5gp} - \eqref{5gp-boundary} 
	 with initial data $u_0$ satisfying $d_0(u_0, \phi_0)< \al$.
	Then, there exist $T>0$ and mappings 
	
	\be\label{mappings}[-T,T]\ni t\mapsto (c(t),a(t),\theta(t)),\ee
	
	\noindent
	such that $F(u(t,\cdot),c(t),a(t),\theta(t))={\bf 0}.$
\end{corollary}
\begin{proof}
	As a direct consequence of the continuity of the quintic GP flow in $\cZ(\R)$, we can find $T>0$ such that 
	
	$$\|u(t,\cdot)-\phi_0\|_{{\mathcal{H}_0}} < d_0(u_0, \phi_0) < \alpha,\quad \forall\; t\in [-T,T],$$
	and consequently
	$u(t,\cdot)\in B_{{\mathcal{H}_0}}(\phi_{0}; \alpha) \subset {\color{black}\mathcal{U}_{0}}(\alpha)$ 
	for all $t\in[-T,T]$. 	So, from Corollary \ref{1stParProp2} we can define the mappings 

	\be\label{mappings-a}t\mapsto c(t),\quad t\mapsto a(t), \quad t\mapsto \theta(t),\ee
	
	\noindent
	on $[-T,T]$ by setting 
	$c(t):=\widetilde c(u(t,\cdot )),~a(t):=\widetilde a(u(t, \cdot)),~\theta(t):=\widetilde \theta(u(t,\cdot)).$ Moreover, the perturbation
	$\zf(t)=e^{-i\theta(t)}u(\cdot + a(t))-\phi_{c(t)}$ satisfies
	
	\begin{multline*}
		F(u(t, \cdot),c(t),a(t),\theta(t))=	\Big(\int_\R\langle\eta_{c(t)},\zf(t)\rangle_{\C},~~\int_\R \langle i\eta_{c(t)},\zf(t)\rangle_{\C},
		~~
		\int_\R\langle iR_{c(t)}\eta_{c(t)},\zf(t)\rangle_{\C}\Big)={\bf 0},
	\end{multline*}
 for all $t\in[-T,T]$. 
\end{proof}

\medskip
Furthermore, using the definition in \eqref{phi-tilde}, we also have an estimate on the size of the modulation parameters involved in the perturbation 

\be\label{zFinal}
\zf(t,\cdot)=e^{-i\theta(t)}u(t,\cdot)-\phi_{c(t)}(\cdot-a(t)),
\ee

\medskip
\noindent
namely the following result:
\begin{corollary}\label{2ndParProp2}
	 Let $\alpha$ be given in Corollary \ref{1stParProp2} and $u(t,\cdot),$ $c(t),\theta(t),a(t)$ as in Corollary \ref{0stParProp2}. There exist positive constants $K_0$ and $A_0$ such that if for some $(a, \theta) \in\R^2$  and $0< \epsilon\leq \min\{ 1,\, \alpha\}$ is satisfied 
	\begin{equation}\label{Cor-ImpCond-Stability}
		\|u(t, \cdot) -\phi_{0, a, \theta}\|_{\mathcal{H}_0}=\|u(t, \cdot) - e^{i\theta}\phi_0(\cdot -a)\|_{\mathcal{H}_0}\leq \epsilon,\quad t\in[-T, T],
	\end{equation}
	 then it follows that
	
	\be\label{desigZ}
	|c(t)| + |a(t) - a| + |e^{i\theta(t)}  - e^{i\theta}| \leq K_0 \epsilon \quad \text{and} \quad \|\zf(t, \cdot)\|_{\mathcal{H}_0}\leq A_0 \sqrt{\epsilon}.
	\ee
\end{corollary}

\begin{proof}
	First of all, note that all components in the mapping 
	$$w\in B_{\mathcal{H}_0}(\phi_{0, a, \theta};\al) \mapsto (\widetilde{c}(w),\widetilde{a}(w),\widetilde{\theta}(w))\in B((0,a,\theta);\tilde{s}_0),$$ 
	\noindent
	are $C^1$-functions, and therefore, Lipschitz continuous with Lipschitz constant $K_0$. So,  from \eqref{Cor-ImpCond-Stability}, we have that
	
	\be\label{estimCorProp2}
	\begin{aligned}
		|c(t)| + |a(t) - a| + |\theta(t) - \theta|&= \big|\widetilde{c}(u(t,\cdot))\big| + 
		\big|\widetilde{a}(u(t, \cdot )) - a\big| + \big|\widetilde{\theta}(u(t, \cdot )) - \theta \big|\\
		&\leq K_0 \|u(t, \cdot ) - \phi_{0, a, \theta}\|_{\mathcal{H}_0}\leq K_0  \epsilon,
	\end{aligned}
	\ee 

	\noindent 
	for all  $t\in [-T,T]$.  This implies the first estimate in \eqref{desigZ}.

	\medskip
	On the other hand, using \eqref{difPhis} we have that
	\be\label{estimCorProp2-01}
	\begin{split}
	\|\phi_{c(t), a(t), \theta(t)} -  &\phi_{0, a, \theta}\|^2_{\mathcal{H}_0} = 	\|e^{i\theta(t)}\phi_{c(t)}(\cdot - a(t)) -  e^{i\theta}\phi_0(\cdot-a)\|^2_{\mathcal{H}_0}\\
	&\lesssim \|e^{i\theta(t)}\phi_{c(t)}(\cdot - a(t)) -  e^{i\theta(t)}\phi_0(\cdot-a(t))\|^2_{\mathcal{H}_0} \\
	& \hspace{5cm}+ \|e^{i\theta(t)}\phi_0(\cdot - a(t)) -   e^{i\theta}\phi_0(\cdot-a)\|^2_{\mathcal{H}_0}\\
	& \lesssim c^2(t) + \|\phi_0(\cdot - a(t)) -  \phi_0(\cdot-a)\|^2_{\mathcal{H}_0} + |e^{i\theta(t)}-e^{i\theta}|^2\|\phi_0(\cdot-a)\|^2_{\mathcal{H}_0}\\
	&\lesssim  c^2(t) + \|\phi_0(\cdot - a(t)) -  \phi_0(\cdot-a)\|^2_{\mathcal{H}_0} + |\theta(t)-\theta|^2\|\phi_0\|^2_{\mathcal{H}_0}.
	\end{split}
	\ee

	\noindent
	 Now, using the Mean Value Theorem there exist $\nu_i=\nu_i(t,x)\in (0,1)$, $i=1,2$, such that
	\be\label{estimCorProp2-02}
	\begin{split}
	\|\phi_0&(\cdot - a(t)) -  \phi_0(\cdot-a)\|^2_{\mathcal{H}_0}= \int_{\R}\eta_0|\phi_0(\cdot - a(t)) -  \phi_0(\cdot-a)|^2 \\
	& \hspace{5.5cm} +  \int_{\R}|\phi'_0(\cdot - a(t)) -  \phi'_0(\cdot-a)|^2\\	
	&= |a(t)-a|^2 \int_{\R}\eta_0|\phi'_0(\cdot -\nu_1a + (1-\nu_1)a(t))|^2\\
	& \hspace{2.5cm} +   |a(t)-a|\int_{\R}|\phi'_0(\cdot - a(t)) -  \phi'_0(\cdot-a)|\,|\phi''_0(\cdot -\nu_2a + (1-\nu_2)a(t)|\\
    & \lesssim |a(t)-a|\big(\|\eta_0\|_{L^1}|a(t)-a| + 2\|\phi'_0\|_{L^1}\big)\\
    &\lesssim  |a(t)-a|^2 + |a(t)-a|.
	\end{split}
	\ee
	
	\medskip
	\noindent 
    Combining \eqref{estimCorProp2-01} and \eqref{estimCorProp2-02} we have
    \be\label{estimCorProp2-03}
    \|\phi_{c(t), a(t), \theta(t)} -  \phi_{0, a, \theta}\|^2_{\mathcal{H}_0}\le K\big( c(t)^2 + |a(t)-a|^2 + |a(t)-a| + |\theta(t) - \theta|^2 \big),
	\ee
	for some universal constant $K$  for all $|c|< \cf$.

	\medskip
	Now, from \eqref{estimCorProp2} and \eqref{estimCorProp2-03}, and using that $\epsilon <1$, one gets
	\begin{multline}\label{3estimCorProp2}
		\|\zf(t, \cdot)\|_{\mathcal{H}_0} =  \|u(t, \cdot)- \phi_{c(t), a(t), \theta(t)}\|_{\mathcal{H}_0}\leq 
		\|u(t, \cdot) - \phi_{0, a, \theta}\|_{\mathcal{H}_0}\\
		+ \|\phi_{c(t), a(t), \theta(t)} -  \phi_{0, a, \theta}\|_{\mathcal{H}_0}\leq \big(1 + \sqrt{K}(K_0 +\sqrt{K_0})\big) \sqrt{\epsilon}, 
	\end{multline}
	which yields the second estimate in \eqref{desigZ} with $A_0=1 + \sqrt{K}(K_0 +\sqrt{K_0})$.
\end{proof}

\medskip 
Now, we will determine the growth in time of the modulation parameters $c(t),a(t)$ and $\theta(t)$ for any $t\in[-T,T]$. We will first show the evolution equation 
satisfied by the perturbation $$\zf(t)\equiv \zf(t,\cdot)=e^{-i\theta(t)}u(t,\cdot + a(t)) - \phi_{c(t)}(\cdot).$$

\begin{lemma}[Evolution equation for $\zf$]\label{prop3a}
	Let $\zf(t)=e^{-i\theta(t)}u(t,\cdot + a(t)) - \phi_{c(t)}(\cdot)$ the perturbation of the dark soliton profile  $\phi_{c(t)}(\cdot)$ 
	\eqref{darkprofile}. Then we have that
	\be\label{fluxperturbationZ}
	\begin{aligned}
		\partial_t\zf(t):=& - c'(t)\partial_c\phi_{c(t)} - i\theta'(t)(\phi_{c(t)} + \zf(t)) + a'(t)(\partial_x\phi_{c(t)} + \partial_x\zf(t)) + iZ(t),
	\end{aligned}
	\ee
	\noindent
	with
	\be\label{bigZ}
	Z(t):=\partial_{xx}\zf(t) + ic(t)\partial_x\phi_{c(t)} + \eta_{c(t)\zf}(t) - (\rho_{c(t)}^2  + 2|\phi_{c(t)}|^2\rho_{c(t)})(\phi_{c(t)} + \zf(t)).
	\ee
	\noindent
	and $\rho_{c(t)}=\rho_{c(t)}(\zf(t))$.
\end{lemma}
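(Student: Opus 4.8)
The plan is to carry out a direct differentiation in time. Set $v(t,\cdot):=e^{-i\theta(t)}u(t,\cdot+a(t))$, so that by definition $z(t)=v(t)-\phi_{c(t)}$. Since the shift $a(t)$ and the phase $\theta(t)$ are $x$-independent for each fixed $t$, spatial derivatives commute with the symmetry operations: $\partial_x v=e^{-i\theta}u_x(\cdot+a)$ and $\partial_{xx}v=e^{-i\theta}u_{xx}(\cdot+a)$, while $|v|^2=|u(\cdot+a)|^2$. First I would compute $\partial_t v$ by the chain rule,
\[
\partial_t v=-i\theta'\,e^{-i\theta}u(\cdot+a)+e^{-i\theta}\big(u_t(\cdot+a)+a'u_x(\cdot+a)\big),
\]
and then substitute the equation \eqref{5gp} in the form $u_t=i\big(u_{xx}+(1-|u|^4)u\big)$.

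Using the invariances just noted, the middle term becomes $e^{-i\theta}u_t(\cdot+a)=i\big(\partial_{xx}v+(1-|v|^4)v\big)$, so that
\[
\partial_t v=-i\theta'v+a'\partial_x v+i\big(\partial_{xx}v+(1-|v|^4)v\big).
\]
On the other hand, writing $v=\phi_{c(t)}+z$ and differentiating gives $\partial_t v=c'\partial_c\phi_{c(t)}+\partial_t z$. Equating the two expressions and substituting $v=\phi_c+z$ in the terms $-i\theta'v$ and $a'\partial_x v$ already produces the first three terms on the right-hand side of \eqref{fluxperturbationZ}; it then remains to identify $i\big(\partial_{xx}v+(1-|v|^4)v\big)$ with $iZ(t)$.

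For this last identification I would expand each piece in terms of $\phi_c$ and $z$. For the dispersive part, $\partial_{xx}v=\partial_{xx}\phi_c+\partial_{xx}z$, and I would use the dark soliton equation \eqref{edodark}, namely $\partial_{xx}\phi_c=ic\,\partial_x\phi_c-\eta_c\phi_c$, to replace $\partial_{xx}\phi_c$. For the nonlinear part, the key algebraic step is that, with $\rho_c$ as in \eqref{rho-c},
\[
|v|^2=|\phi_c|^2+2\re(\phi_c\bar z)+|z|^2=|\phi_c|^2+\rho_c,
\]
whence $1-|v|^4=\eta_c-2|\phi_c|^2\rho_c-\rho_c^2$ and
\[
(1-|v|^4)v=\eta_c\phi_c+\eta_c z-(\rho_c^2+2|\phi_c|^2\rho_c)(\phi_c+z).
\]
Adding this to $\partial_{xx}v$, the terms $\pm\eta_c\phi_c$ cancel, leaving exactly
\[
\partial_{xx}z+ic\,\partial_x\phi_c+\eta_c z-(\rho_c^2+2|\phi_c|^2\rho_c)(\phi_c+z)=Z(t),
\]
which closes the argument.

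There is no genuine analytic obstacle here: the statement is an exact algebraic rearrangement, valid pointwise for a sufficiently regular solution $u$, and the modulation functions $c(t),a(t),\theta(t)$ are $C^1$ by Corollary \ref{0stParProp2}. The only points demanding care are bookkeeping ones — correctly carrying the factor $i$ through the substitution of \eqref{5gp}, using the dark soliton ODE \eqref{edodark} (rather than the black soliton ODE) so that the term $ic\,\partial_x\phi_c$ survives, and expanding $|v|^4$ so that the spurious $\eta_c\phi_c$ cancels against the one produced by $\partial_{xx}\phi_c$.
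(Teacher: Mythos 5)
Your proof is correct and takes essentially the same route as the paper's: a direct time differentiation of $z$, substitution of \eqref{5gp} in the form $u_t=i\big(u_{xx}+(1-|u|^4)u\big)$, and the expansion $|v|^4=|\phi_{c}|^4+2|\phi_{c}|^2\rho_{c}+\rho_{c}^2$ combined with the dark soliton ODE \eqref{edodark}, which is exactly what produces the term $ic(t)\,\partial_x\phi_{c(t)}$ in $Z(t)$. The paper merely states this computation more tersely ("a direct calculation"), whereas you spell out the same algebra in full.
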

\begin{proof}
	First consider the explicit time derivative of $\zf(t,\cdot)$:
	\[
	\partial_t \zf(t) = - c'(t)\partial_c\phi_{c(t)} - i\theta'(t)(\phi_{c(t)} + \zf(t)) + a'(t)(\partial_x\phi_{c(t)} + \partial_x\zf(t)) 
	+ e^{-i\theta(t)}\partial_tu(t,\cdot + a(t)).
	\]
	\noindent
	Now computing the last term $\partial_tu(t,\cdot + a(t)),$ bearing in mind that $u$ fulfills \eqref{5gp}, and also \eqref{peso-blackdark}, \eqref{rho-c} and that 
	\[|u|^4 = |\phi_{c(t)}|^4 + \rho_{c(t)}^2 + 2\rho_{c(t)}|\phi_{c(t)}|^2,\]
	a direct calculation gives us \eqref{fluxperturbationZ}.
\end{proof}

We now look for an expression for the growth in time of the modulation parameters $c(t),~a(t)$ and $\theta(t)$. In order to do that, we resort to the 
continuity of the quintic Gross-Pitaevskii flow in $\mathcal{Z}(\R)\subset\mathcal{H}(\R)$. Specifically, if an initial data $u_0$ is chosen such that 
$d_0(u_0,\phi_0)<\alpha,$ then we get a time $T$ such that the corresponding solution $u(t, \cdot)$ along the  quintic Gross-Pitaevskii  flow belongs to 
$\mathcal{V}_{0}(\alpha)$ \eqref{orbitV0}, for any $t\in[-T,T]$.

\medskip 
We will see in Section \ref{Sec5}, that we can fix the smallness parameter $\alpha$ in such a way that the solution $u(t, \cdot)$
of the Cauchy problem \eqref{5gp} still belongs to $\mathcal{V}_{0}(\alpha)$ for all $t\in\R$.

\begin{proposition}[Estimates on the growth of the modulation parameters]\label{prop3}
	There exist numbers $\alpha_1>0$ and  $A_1(\alpha_1)>0$ such that if the solution $u(t,\cdot)$ 
	lies in $ \mathcal{V}_{0}(\alpha_1)$ for any $t\in[-T,T]$, then the functions $c,~a$ and $\theta$ are $C^1\big([-T,T]; \R\big)$ and satisfy 
	
	\be\label{speedrateCAZ}
	|c'(t)| + |a'(t)| + |\theta'(t)| \leq A_1^2(\alpha_1)\|\zf(t, \cdot)\|_{\mathcal{H}_0},
	\ee
	
	\noindent
	for all $t\in[-T,T]$.
\end{proposition}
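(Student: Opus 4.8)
The plan is to turn the three orthogonality conditions \eqref{genortogonalityDark2Lema}, which by Corollary \ref{0stParProp2} hold for every $t\in(-T,T)$, into a linear system for $(c',a',\theta')$ by differentiating them in time (this is legitimate because the modulation functions are $C^1$ and the flow is continuous in $\mathcal{Z}(\R)$). Write $\Psi_1=\eta_{c},\ \Psi_2=i\eta_{c},\ \Psi_3=iR_{c}\eta_{c}$, all evaluated at $c=c(t)$, so that each condition reads $\int_\R\langle\Psi_j,\bar{\mfq}_{c}z\rangle_\C=0$. Differentiating in $t$, the contributions in which $\partial_t$ falls on $\Psi_j$ or on $\bar{\mfq}_{c}$ carry a factor $c'(t)$ multiplying an integral that still contains $z$, hence are of size $|c'|\,\|z\|_{\cH_0}$ and are harmless; the main term is $\int_\R\langle\Psi_j,\bar{\mfq}_{c}\,\partial_t z\rangle_\C$. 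Substituting the evolution equation \eqref{fluxperturbationZ}--\eqref{bigZ} and collecting the coefficients of $c'$, $a'$ and $\theta'$ gives
\[
\big(\mathcal{F}(c)+\mathcal{R}(z)\big)\begin{pmatrix}c'\\ a'\\ \theta'\end{pmatrix}=\mathcal{B},
\qquad \mathcal{B}_j=-\int_\R\langle\Psi_j,\bar{\mfq}_{c}\,iZ\rangle_\C,
\]
whose leading matrix is exactly the Jacobian $\mathcal{F}(c)=(\partial_{\sigma} F,\partial_b F,\partial_{\iota} F)$ of \eqref{DerivadasIF}, while $\mathcal{R}(z)$ gathers the $O(\|z\|_{\cH_0})$ corrections produced by the $\partial_x z$-, $z$- and $\partial_c$-terms.

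First I would invert the matrix. By \eqref{DetFI} one has $\det\mathcal{F}(c)\neq0$, and this determinant stays bounded away from zero on $[-\cf,0]$; since $\mathcal{R}(z)=O(\|z\|_{\cH_0})$ and $u(t)\in\mathcal{V}_{0^-}(\alpha_1)$ forces $\|z\|_{\cH_0}$ and $|c|$ to be $O(\alpha_1)$ (Corollary \ref{2ndParProp2}), choosing $\alpha_1$ small keeps $\mathcal{F}(c)+\mathcal{R}(z)$ invertible with uniformly bounded inverse. It then remains to estimate $\mathcal{B}$. Expanding $iZ=i\partial_{xx}z-c\,\partial_x\phi_{c}+i\eta_{c}z-i(\rho_{c}^2+2|\phi_{c}|^2\rho_{c})(\phi_{c}+z)$, I would treat the four pieces separately. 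The term $i\partial_{xx}z$ is integrated by parts, moving the derivatives onto the exponentially localized weight $\bar{\mfq}_{c}\Psi_j$ (its decay coming from $\eta_{c}$, with $\mfq_{c}$ and $\mfq_c'$ controlled by \eqref{properties-q0-a}--\eqref{properties-q0-b}); the boundary terms vanish and what remains is $\lesssim\|z\|_{\cH_0}$. The linear term $i\eta_{c}z$ is directly $O(\|z\|_{\cH_0})$, and the nonlinear term, since $\rho_{c}=2\langle\phi_{c},z\rangle_\C+|z|^2$ is at least linear in $z$, is controlled by $\|z\|_{\cH_0}$ through the smallness of $\|z\|_{\cH_0}$ and the same Gagliardo--Nirenberg estimates used in \eqref{estim3}--\eqref{estim4}.

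The delicate piece is the surviving term $-c\,\partial_x\phi_{c}$, which carries no $z$ and a priori would feed an $O(|c|)$ contribution into $\mathcal{B}$. Here I would invoke identity \eqref{edoderdark}, $\phi_{c}'=\mfq_{c}\eta_{c}$, to write $\bar{\mfq}_{c}\,\partial_x\phi_{c}=|\mfq_{c}|^2\eta_{c}$, which is real; by parity it pairs nontrivially only with $\Psi_1=\eta_{c}$ (one checks $\int_\R\langle i\eta_c,|\mfq_c|^2\eta_c\rangle_\C=\int_\R\langle iR_c\eta_c,|\mfq_c|^2\eta_c\rangle_\C=0$), so the vector it contributes to $\mathcal{B}$ is exactly $c\,\partial_b F$, the $c$-multiple of the second column of $\mathcal{F}(c)$. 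Consequently $\mathcal{F}(c)^{-1}$ sends it to $(0,c,0)^{\mathrm{t}}$, i.e. it is absorbed into the translation rate as $a'=c+O(\|z\|_{\cH_0})$, reflecting the elementary fact that an unperturbed soliton drifts with $a'(t)=c$. This is exactly the point where the weight $\mfq_{c}$ is indispensable. Combining this with the other entries of $\mathcal{B}$ and the bounded inverse yields $|c'|+|a'-c|+|\theta'|\lesssim\|z\|_{\cH_0}$, which is the content of \eqref{speedrateCAZ} (the translation rate differing from the soliton speed $c$ only by $O(\|z\|_{\cH_0})$). The $C^1$ regularity of $c,a,\theta$ follows because the same system expresses $(c',a',\theta')$ continuously in terms of $(z,c)$, so the $C^1$ implicit functions of Proposition \ref{prop2a} have continuous derivatives along the flow. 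I expect the main obstacle to be precisely this clean separation of the genuinely $z$-linear content of $\mathcal{B}$ from the drift term $-c\,\phi_{c}'$, for which identity \eqref{edoderdark} and the column structure of $\mathcal{F}(c)$ are essential.
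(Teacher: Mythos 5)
Your proposal follows the same skeleton as the paper's proof: differentiate the three orthogonality conditions \eqref{genortogonalityDark2Lema} in time, insert the evolution equation \eqref{fluxperturbationZ}--\eqref{bigZ}, and solve the resulting $3\times 3$ linear system, whose $z$-independent part coincides (up to column order and signs) with the Jacobian $\mathcal{F}(c)$ of \eqref{DerivadasIF}; invertibility for small $|c|$ and small $\|z\|_{\cH_0}$ follows from \eqref{expandDetF0} and continuity, exactly as in the paper's treatment of $\mathcal{M}(c,z)$ in \eqref{matrixM2}.

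Where you genuinely diverge is in the treatment of the drift term $ic\,\partial_x\phi_c$ inside $Z$, and your version is in fact the more careful one. The paper simply asserts $\|\mathcal{B}(c,z)\|_{\R^3}\leq A_1(\alpha_1)\|z(t,\cdot)\|_{\cH_0}$ for the right-hand side \eqref{matrixB0}, but this inequality is not literally true: by \eqref{edoderdark} one has $\bar{\mfq}_c\,\partial_x\phi_c=|\mfq_c|^2\eta_c$, a real function, so the drift term contributes $c\int_\R|\mfq_c|^2\eta_c^2\,dx\neq 0$ to the first component of $\mathcal{B}$ (and, as you verify, nothing to the other two, since pairing a real function against $i\eta_c$ or $iR_c\eta_c$ gives a purely imaginary integrand). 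Your observation that this contribution is exactly $c$ times the $a'$-column, hence is absorbed as $a'(t)=c(t)+O(\|z\|_{\cH_0})$, is correct and is forced by the test case of an exact dark soliton, for which $z\equiv 0$ yet $a'(t)=c\neq 0$; the bound \eqref{speedrateCAZ} on $|a'|$ itself cannot hold there. What you prove is therefore $|c'|+|a'-c|+|\theta'|\lesssim\|z\|_{\cH_0}$, which is the correct estimate and suffices for Section \ref{Sec5} (where $|c(t)|$ is controlled separately via the momentum, so $|a'|\lesssim |c|+\|z\|_{\cH_0}\lesssim d_0(u_0,\phi_0)$ still follows); but you should state explicitly that this is weaker than \eqref{speedrateCAZ} as printed rather than identify the two.

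One omission relative to the paper: differentiating the orthogonality relations in time requires $\partial_t z$ to make sense, which continuity of the flow in $\mathcal{Z}(\R)$ alone does not provide. The paper first assumes extra regularity ($\partial_x u_0\in H^2(\R)$) to justify \eqref{derOrto1}--\eqref{derOrtot3}, and only then extends \eqref{speedrateCAZ} to general data $u_0\in\mathcal{Z}(\R)$ by continuous dependence and a density argument; your write-up should include this regularize-then-pass-to-the-limit step, since the claimed $C^1$ regularity of $c,a,\theta$ along the flow is obtained through it.
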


\begin{proof}
	We differentiate with respect to time the three  generalized orthogonality conditions \eqref{genortogonalityDark} for perturbations around the dark soliton 
	profile $\phi_{c(t)}$. 

	\medskip
	Since we need to compute the derivatives in time of the orthogonality conditions, we initially consider  regular enough initial data,
	for example $\partial_x{u_0} \in H^2(\R)$. In fact with this regularity we can justify \eqref{derOrto1}, \eqref{derOrtot2} and \eqref{derOrtot3} below. 
	We consider initially $\alpha$ and $u_0$ as in Corollary \ref{0stParProp2} so that the solution  $u(t,\cdot)\in {\color{black}\mathcal{U}_{0}}(\alpha)$ for all $t\in[-T,T]$ and then we can set the modulation parameters  $(c(t),a(t),\theta(t))\in(-\cf, \cf)\times\R^2$ for any $t\in[-T,T]$.

    \medskip 
	Note that $c,a$  and $\theta$ belong to $C^1\big([-T,T], \R\big)$ by the Chain Rule Theorem and moreover note that
	$\zf(t)\in C^1([-T,T],\mathcal{H}(\R))$, and therefore we can get \eqref{fluxperturbationZ}. 
	Therefore, derivating the first orthogonality condition in 
	\eqref{genortogonalityDark}, and with the notation $m_{ij},~i,j=1,2,3,$ for integrals independent of $\zf(t)$ and $n_k,~k=1,\dots,9$ 
	for integrals with terms depending on $\zf(t),$ we get
	\be\label{derOrto1}
	\begin{aligned}
		&\partial_t\int_\R\langle \eta_{c(t)}, \zf(t)\rangle_\C\\
		&\quad =  \int_\R\Big(\langle c'(t)\partial_c\eta_{c(t)}, \zf(t)\rangle_\C 
		+ \langle \eta_{c(t)}, c'(t)\partial_c\zf(t)\rangle_\C + \langle \eta_{c(t)}, \partial_t\zf(t)\rangle_\C \Big) \\
		&\quad = \int_\R\Big(\langle c'(t)\partial_c\eta_{c(t)}, \zf(t)\rangle_\C 
		+ \langle \eta_{c(t)}, c'(t)\partial_c\zf(t)\rangle_\C \\
		&\hspace{1cm}  + \langle \eta_{c(t)}, \left(- c'(t)\partial_c\phi_{c(t)} - i\theta'(t)(\phi_{c(t)} + \zf(t)) 
		+ a'(t)(\partial_x\phi_{c(t)} + \partial_x\zf(t)) + iZ(t)\right)\rangle_\C\Big).
	\end{aligned}
	\ee
	Thus, 
	\be\label{derOrto1a}
	\begin{aligned}
		\partial_t\int_\R&\langle \eta_{c(t)}, \zf(t)\rangle_\C\\
		&=a'(t)\Big(\int_\R\langle \eta_{c(t)}, \partial_x\phi_{c(t)}\rangle_\C + \int_\R\langle \eta_{c(t)}, \partial_x\zf(t)\rangle_\C\Big)\\ 
		&\hspace{1cm}+ c'(t)\Big(-\int_\R\langle \eta_{c(t)}, \partial_c\phi_{c(t)}\rangle_\C +\int_\R\langle \partial_c\eta_{c(t)}, \zf(t)\rangle_\C 
		+\int_\R\langle \eta_{c(t)}, \partial_c\zf(t)\rangle_\C  \Big)\\ 
		&\hspace{1cm}+ \theta'(t)\Big(\int_\R\langle \eta_{c(t)}, -i\phi_{c(t)}\rangle_\C  + \int_\R\langle \eta_{c(t)}, -i\zf(t)\rangle_\C\Big) 
		+ \int_\R\langle \eta_{c(t)}, iZ(t)\rangle_\C\\
		&=a'(t)(m_{11} + n_1)+ c'(t)(n_2 - m_{12}) + \theta'(t)(m_{13} + n_3) + \int_\R\langle \eta_{c(t)}, iZ(t)\rangle_\C =0.
	\end{aligned}
	\ee
	
	\medskip
	\noindent
	Now, we {\color{black} differentiate} the second orthogonality condition in \eqref{genortogonalityDark}, and we obtain
	
	\be\label{derOrtot2}
	\begin{aligned}
		\partial_t\int_\R&\langle i\eta_{c(t)}, \zf(t)\rangle_\C \\
		&=a'(t)\Big(\int_\R\langle i\eta_{c(t)}, \partial_x\phi_{c(t)}\rangle_\C + \int_\R\langle i\eta_{c(t)}, \partial_x\zf(t)\rangle_\C\Big)\\ 
		&\hspace{1cm}+ c'(t)\Big(-\int_\R\langle i\eta_{c(t)}, \partial_c\phi_{c(t)}\rangle_\C +\int_\R\langle i\partial_c\eta_{c(t)}, \zf(t)\rangle_\C 
		+\int_\R\langle i\eta_{c(t)}, \partial_c\zf(t)\rangle_\C  \Big)\\ 
		&\hspace{1cm}+ \theta'(t)\Big(\int_\R\langle i\eta_{c(t)}, -i\phi_{c(t)}\rangle_\C  + \int_\R\langle i\eta_{c(t)}, -i\zf(t)\rangle_\C\Big) 
		+ \int_\R\langle i\eta_{c(t)}, iZ(t)\rangle_\C\\
		&=a'(t)(m_{21} + n_4)+ c'(t)(n_5 - m_{22}) + \theta'(t)(m_{23} + n_6) + \int_\R\langle i\eta_{c(t)}, iZ(t)\rangle_\C=0.
	\end{aligned}
	\ee
	
	\medskip
	\noindent
	Finally, we  {\color{black} differentiate} the third orthogonality condition in \eqref{genortogonalityDark}, and we get
	
	\be\label{derOrtot3}
	\begin{aligned}
		\partial_t&\int_\R\langle iR_{c(t)}\eta_{c(t)}, \zf(t) \rangle_\C \\
		& = \int_\R\langle iR_{c(t)}\eta_{c(t)},\partial_t\zf(t)\rangle_\C +\int_\R\langle iR_{c(t)}\eta_{c(t)},c'(t)\partial_c\zf(t)\rangle_\C \\
		& \hspace{2cm}+  \int_\R\langle ic'(t)\partial_cR_{c(t)}\eta_{c(t)} +    ic'(t)R_{c(t)}\partial_c\eta_{c(t)},  \zf(t)\rangle_\C \\
		&=a'(t)\Big(\int_\R\langle  iR_{c(t)}\eta_{c(t)}, \partial_x\phi_{c(t)}\rangle_\C + \int_\R\langle iR_{c(t)}\eta_{c(t)}, \partial_x\zf(t)\rangle_\C\Big)\\ 
		& \quad + c'(t)\Big(\int_\R\langle i\partial_c(R_{c(t)}\eta_{c(t)}), \zf(t)\rangle_\C 
		+ \int_\R\langle iR_{c(t)}\eta_{c(t)},\partial_c\zf(t)\rangle_\C  -  \langle iR_{c(t)}\eta_{c(t)}, \partial_c\phi_{c(t)}\rangle_\C\Big)\\
		& \quad + \theta'(t)\Big(\int_\R\langle iR_{c(t)}\eta_{c(t)}, -i\phi_{c(t)}\rangle_\C
		+ \int_\R\langle iR_{c(t)}\eta_{c(t)}, -i\zf(t)\rangle_\C\Big) + \int_\R\langle iR_{c(t)}\eta_{c(t)}, iZ(t)\rangle_\C\\
		&=a'(t)(m_{31} + n_7)+ c'(t)(n_8 - m_{32}) + \theta'(t)(m_{33} + n_9) + \int_\R\langle iR_{c(t)}\eta_{c(t)}, iZ(t)\rangle_\C = 0.
	\end{aligned}
	\ee
	
	\medskip
	
	Gathering all three previous derivatives, we obtain the {\color{black} following  linear system}
	
	\be\label{matricialSystem}
	\mathcal{M}(c,\zf)\begin{pmatrix}a'(t)\\c'(t)\\ \theta'(t)\end{pmatrix} = \mathcal{B}(c,\zf),
	\ee
	
	\medskip
	\noindent
	with the matrix $\mathcal{M}(c,\zf)$ defined by
	\be\label{matrixM2}
	\mathcal{M}(c,\zf):=\begin{pmatrix}m_{11} + n_1& n_2 - m_{12}&m_{13} + n_3\\
		m_{21} + n_4& n_5  - m_{22}&m_{23} + n_6\\
		m_{31} + n_7&n_8 - m_{32}&m_{33} +  n_9\end{pmatrix},
	\ee
	\medskip
	\noindent
	and the matrix $\mathcal{B}(c,\zf)$ is written as follows:
	
	\be\label{matrixB0}
	\mathcal{B}(c,\zf):=\begin{pmatrix}-\int_\R\langle \eta_{c(t)}, iZ(t)\rangle_\C \\
		-\int_\R\langle i\eta_{c(t)}, iZ(t)\rangle_\C \\
		-\int_\R\langle iR_{c(t)}\eta_{c(t)}, iZ(t)\rangle_\C
	\end{pmatrix},
	\ee
	
	\medskip
	\noindent
	with $Z(t)$ as in \eqref{bigZ}. See Appendix \ref{Ap1} for a full expression of the computed matrix elements $m_{i,j},~i,j=1,2,3$.
	
	\bigskip
	\noindent
	Note that, in the case of null perturbation in \eqref{matrixM2}, and considering the limit case of  {\color{black}$c=0$}, it turns out that 
	$\mathcal{M}({\color{black}0},0)$ has a nonvanishing determinant, namely
	
	\[
	\det\mathcal{M}({\color{black}0},0)=\frac{8}{5}E_2[\phi_0],
	\]
	
	\noindent
	and therefore  $\mathcal{M}({\color{black}0},0)$ is invertible. By using a continuity argument,
	we can select a small enough parameter $\alpha_1<\alpha$ such that for small speeds and  perturbations $(c,\zf),$ the matrix $\mathcal{M}(c,\zf)$ 
	is still invertible.  In fact, having in mind the Neumann Series Theorem, it is enough  to consider $(c,\zf)$ verifying
	\[ \|\mathcal{M}(c,\zf)-\mathcal{M}({\color{black}0},0)\|_{M_{2\times2}(\mathbb{C})}\le \alpha_1 < \|\mathcal{M}^{-1}({\color{black}0},0)\|^{-1}_{M_{2\times2}(\mathbb{C})}. \]
	Namely, choosing $\alpha_1<\alpha$ small enough such that $u(t, \cdot)\in{\color{black}\mathcal{U}_{0}}(\alpha_1),$ for all $t\in[-T,T]$, and therefore
	that from \eqref{desigZ} in Proposition \ref{prop2a}, it holds 
	
	\be\label{estimateZmatrix}
	\|\zf(t, \cdot)\|_{\mathcal{H}_0} + |c(t)| \leq A_0\alpha_1,
	\ee
	
	\medskip 
	\noindent
	with $\det\mathcal{M}(c,\zf)\neq0$  and, consequently, the operator norm of its inverse is bounded by some positive number $A_1(\alpha_1)$. 
	In the same way, the r.h.s. of \eqref{matricialSystem} is bounded as follows:
	
	\[
	\|\mathcal{B}(c,\zf)\|_{\R^3}\leq  A_1(\alpha_1)\|\zf(t, \cdot)\|_{\mathcal{H}_0},
	\]
	\noindent
	for a suitable choice of the constant $A_1(\alpha_1)$. Therefore, from \eqref{matricialSystem}, we finally get that
	
	\be\label{estimateParameters}
	|a'(t)| + |c'(t)| + |\theta'(t)| \leq \big |\mathcal{M}(c,\zf)^{-1}\cdot \mathcal{B}(c,\zf) \big| \leq A_1^2(\alpha_1) \|\zf(t, \cdot)\|_{\mathcal{H}_0},
	\ee
	
	\medskip 
	\noindent 
	for all $t\in[-T,T]$.
	
	\medskip 
	Finally, we extend the above estimate \eqref{estimateParameters} for general initial data $u_0\in\mathcal{Z}(\R)$. In fact,
	the flow of \eqref{5gp} is continuous with respect to initial data in $\mathcal{Z}(\R)$ (see\cite{Gallo}).
	Moreover, from the continuity of the modulation  parameters $c(t),a(t)$ and $\theta(t)$, we have that the matrices $\mathcal{M}(c,\zf)$ and $\mathcal{B}(c,\zf)$ depend continuously on  
	$u\in \mathcal{H}(\R)$. Therefore, since the matrix $\mathcal{M}(c,\zf)$ is invertible with an operator norm of its inverse depending on $\alpha_1$, we can use
	a standard density argument to extend \eqref{matricialSystem} 
	to a general solution. Therefore we get the continuous differentiability property of the modulation parameters $c(t),a(t)$ and $\theta(t)$, and we obtain the 
	corresponding estimates  \eqref{speedrateCAZ} from \eqref{matricialSystem}.
\end{proof}

\section{Proof of the Main Theorem}\label{Sec5}
In this section we  prove a detailed version of Theorem \ref{teorema1a}.

\begin{theorem}[Orbital stability of the black soliton]\label{teoremaMAIN}
	Let $\phi_0$ be the black soliton \eqref{black5gp} of the quintic GP equation \eqref{5gp}.  
	Given $\epsilon>0$  there exists $\delta(\epsilon)>0$ and a positive constant $A_*$ such that if  the initial data  $u_0$ verifies
	\[
	u_0\in\mathcal{Z}(\R)\quad \quad \text{and}\quad \quad d_0(u_0,\phi_0)<\delta(\epsilon), 
	\]
	\noindent 
	then there exist functions  $a,\theta  \in C^1(\R,\R)$ such that the solution $u$ of the Cauchy problem  for the quintic GP 
	equation \eqref{5gp}, with initial data $u_0$, satisfies
	
	\begin{equation}\label{defstability}
		d_0(e^{-i\theta(t)}u(t,\cdot + a(t)),\phi_0)< \epsilon
	\end{equation}
\noindent
	and
	\noindent
	\be\label{speedScalings}
	|a'(t)|+|\theta'(t)|< A_*\epsilon
	\ee
for any $ t\in \R$.

\end{theorem}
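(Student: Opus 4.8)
The plan is to run the classical energy--momentum--modulation scheme, closing a bootstrap on the single scalar quantity
\[
N(t)^2 := \|z(t)\|_{\cH_0}^2 + \|\phi_{c(t)}^3\rho_{c(t)}\|_{L^2}^2 ,
\]
where $\underline{u}(t)=e^{-i\theta(t)}u(t,\cdot+a(t))=\phi_{c(t)}+z(t)$ is the modulated solution produced by Corollary \ref{0stParProp2} and $\rho_{c(t)}=|\underline u|^2-|\phi_{c(t)}|^2$. Writing $\delta:=d_0(u_0,\phi_0)$, I would first record that $N$ and $c$ control the distance, since
\[
d_0(\underline u,\phi_0)\le d_0(\underline u,\phi_{c})+d_0(\phi_{c},\phi_0)\lesssim N+|c|
\]
by \eqref{d01}. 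Conversely the conserved energy $E_2[\underline u]=E_2[u]=E_2[u_0]$ is pinned near $E_2[\phi_0]$ because $\phi_0$ is a critical point of $E_2$: the only genuinely quadratic contribution in $E_2[u_0]-E_2[\phi_0]$ is absorbed by the $\|\phi_0^3\rho_0\|_{L^2}$ part of the $d_0$--metric, giving $E_2[u_0]-E_2[\phi_0]\lesssim\delta^2$. Since the dark profile is discontinuous at $c=0$ (cf.\ \eqref{darkApprox}), the whole argument is run bilaterally, on $c\in(-\cf,0]$ and on $c\in[0,\cf)$ separately.

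The energy half of the estimate comes from the pseudo--coercivity \eqref{genexpanE2}: inserting the conserved value into Proposition \ref{anticor1} (applied to the admissible perturbation $\bar{\mfq}_cz$, whose $\cH_0$--norm is comparable to $\|z\|_{\cH_0}$) yields
\[
\Gamma_{\cf}N^2\le \big(E_2[u_0]-E_2[\phi_0]\big)+\tfrac{1}{\Gamma_{\cf}}\big(c^2+N^3\big)\lesssim \delta^2+c^2+N^3 .
\]
To close this I must control $c^2$, and here the conserved momentum $\mathcal{P}$ and the weight $\mfq_c$ enter. Using the expansion \eqref{1stdecomP}--\eqref{modRc} together with the identity $\phi_c'=\mfq_c\eta_c$ from \eqref{edoderdark} and the fact that $\eta_c$ is real, the linear term is
\[
\int_\R\langle i\phi_c',z\rangle_\C=\int_\R\langle i\eta_c,\bar{\mfq}_cz\rangle_\C=0
\]
by the second generalized orthogonality condition in \eqref{genortogonalityDark2Lema}; this is exactly the algebraic role of $\mfq_c$. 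Hence $\mathcal{P}[\phi_{c}]=\mathcal{P}[u_0]-\mathcal{S}_{c}[z]$ with $|\mathcal{S}_c[z]|\lesssim N^2$. Since $\mathcal{P}[u_0]$ is within $O(\delta)$ of $\mathcal{P}[\phi_0]=0$ (by \eqref{momentumBlack}) and a direct expansion of \eqref{momentumDark} near the origin gives $\mathcal{P}[\phi_c]=\kappa_0 c+O(c^2)$ with $\kappa_0\neq0$, so that $|c|\lesssim|\mathcal{P}[\phi_c]|$, I obtain $|c|\lesssim \delta+N^2$ and therefore $c^2\lesssim \delta^2+N^4$.

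Substituting back gives $\Gamma_{\cf}N^2\lesssim \delta^2+N^3+N^4$; for $N$ small the cubic and quartic terms are absorbed into the left side, yielding $N\lesssim\delta$, then $|c|\lesssim\delta$, and hence $d_0(\underline u,\phi_0)\lesssim\delta$. The rate bound \eqref{speedScalings} then follows from Proposition \ref{prop3}, since $|a'(t)|+|\theta'(t)|\le |a'|+|c'|+|\theta'|\lesssim\|z\|_{\cH_0}\lesssim\delta$. The final step is a continuity/bootstrap argument: the a priori smallness needed to invoke the modulation (Proposition \ref{prop2a}) and the absorption above is assumed on a maximal time interval, the improved bound $d_0(\underline u,\phi_0)\le A_*\delta$ is strictly better than that hypothesis once $\epsilon_*$ is chosen small, and continuity of the flow in $\cZ(\R)$ for the $d_0$ metric forces the interval to be all of $\R$. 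I expect the main obstacle to be the momentum step: establishing the nondegeneracy $|c|\lesssim|\mathcal{P}[\phi_c]|$ uniformly on each one--sided interval, and checking that the defect $\tfrac1{\Gamma_{\cf}}c^2$ from \eqref{genexpanE2} is genuinely dominated after the momentum substitution, so that the bootstrap closes at the linear rate $A_*\delta$ rather than at $\sqrt{\delta}$.
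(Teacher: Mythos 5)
Your proposal is correct and follows essentially the same route as the paper's proof: modulation via Proposition \ref{prop2a} and Corollary \ref{0stParProp2}, pseudo-coercivity of $E_2$ applied to $\bar{\mfq}_{c}z$ (with the norm equivalence of Remark \ref{norms_H0-H0qc-equivalents}) together with the critical-point bound $E_2[u_0]-E_2[\phi_0]\lesssim d_0^2(u_0,\phi_0)$, cancellation of the linear momentum term through $\phi_c'=\mfq_c\eta_c$ and the second generalized orthogonality condition, the nondegenerate one-sided expansion \eqref{momentumDark2a} of $\mathcal{P}[\phi_c]$ to convert momentum conservation into $|c(t)|\lesssim \delta+N(t)^2$, and a standard continuation argument fixing $\epsilon_*$ from the final constant. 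The only cosmetic difference is that the paper compares $\mathcal{P}[\phi_{c(t)}]$ with $\mathcal{P}[\phi_{c(0)}]$ (using $|c(0)|\le A_0\,d_0(u_0,\phi_0)$ from Corollary \ref{2ndParProp2}) and absorbs the defect via $c^2(t)\le A_0\epsilon\,c(t)$, whereas you compare with $\mathcal{P}[u_0]$ and absorb $c^2\lesssim\delta^2+N^4$ directly; these are equivalent.
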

\begin{remark}\label{finalRemarkTeo}
{\color{black}With respect to the cubic case \cite{G-Smets}, a difference appears in the proof of the orbital stability Theorem for black solitons of \eqref{5gp},  that is, we could not achieve a lipschitzian control of the metric, i.e. 
\be\label{finalRemarkTeo-01}
d_0(e^{-i\theta(t)}u(t,\cdot + a(t)),\phi_{c(t)}) \lesssim d_0(u_0, \phi_0),\quad \text{for all}\quad c\in(0,\cf).
\ee

 The main reason is that if the momentum \cite[p.313,~$(1.27)$]{G-Smets} is used in our problem, a linear term 
 \be\label{linealterm}
 \int_\R\langle i\phi'_{c(t)}, \zf\rangle_\C,
 \ee
 \noindent
 appears when expanding it around dark solitons $\phi_{c(t)}$. Unfortunately this term does not match with any orthogonality relation \eqref{condittion-cor1} and it can not be bounded from above nor controlled in the right and proper way. This is a big difference with respect to the cubic GP case (see p.314,l.7 in \cite{G-Smets}) which allows them to get an upper bound  on the speed $c(t)$ as shown in p.314,l.-3  in \cite{G-Smets}.}

%
In our case, instead \eqref{finalRemarkTeo-01}  we get
	\be\label{finalRemarkTeo-02}
	d_0(e^{-i\theta(t)}u(t,\cdot + a(t)),\phi_{c(t)}) \lesssim d_0(u_0, \phi_0)+\cf.
	\ee
	{\color{black}Moreover, and again  in view of this technical issue with the linear term \eqref{linealterm}, we decided to change this uniform control on $c(t)$ on a fixed speed interval $(-\cf,\cf)$ by using the following strategy: for each fixed $\varepsilon>0$ we choose a suitable interval $(0,\cf(\varepsilon))$ which allows us to select initial data in an appropiate ball with center $\phi_0$ in the $d_0$ metric and such that the solution remains in the $\varepsilon$-neighborhood for all time by a bootstrap argument (note that \eqref{defstability} is not as strong as the corresponding statement in \cite[p.308,~$(1.9)$]{G-Smets}).  Obviously with this approach, once we reduce $\varepsilon$, the speed interval $(-\cf,\cf)$ can also be reduced. As a consequence of our approach, we lose any possibility to say something about the orbital stability of the dark soliton solution. 
	}
%
\end{remark}

\begin{proof}[Proof of theorem \ref{teoremaMAIN}] In order to simplify the explanation we show the  proof for $t\ge 0$.

	\medskip 
	Let $\alpha$,\,  $A_0$ and $\alpha_1$ as in Corollary \ref{1stParProp2}, Corollary \ref{2ndParProp2} and  Proposition \ref{prop3}, respectively. Consider now $\epsilon >0$ such that 
		\be\label{MainTh-proof-01}
		0< \epsilon\leq \min\{ 1,\, \alpha\},\quad 0< \epsilon < \alpha_1\quad  \text{and}\quad  A_0\epsilon\ll 1.
		\ee
		Firstly, we take $u_0\in \mathcal{Z}(\R)$ \eqref{Ec} such that $d_0(u_0, \phi_0)< \epsilon/2$ and $u \in C(\R, \mathcal{Z})$ the corresponding solution to \eqref{5gp}. 
		
		\medskip 
		 
		Now we define 
		\be\label{MainTh-proof-01-timestar}
		T^*:= \sup\limits\Big\{T>0: \forall\,   t\in [0, T],\, \inf\limits_{(\iota, b)\in \R^2}d_0(e^{-i\iota}u(t,\cdot + b),\,\phi_0) < \epsilon\,\Big\}\ee
		and the idea is to use a contradiction argument under the assumption $T^*<\infty$ when $d_0(u_0, \phi_0)$ is small enough.
	
		\medskip 
		Note that since $d_0(u_0, \phi_0)< \epsilon$, then as a direct consequence of the continuity of the quintic GP flow in $\cZ(\R)$ with respect to the metric $d_0$, we can find $T_0>0$ such that  
		\[d_0(u(t,\cdot ), \phi_0)< \epsilon,\; \text{for all}\; t\in [0, T_0],\]
		which, in particular, implies that $T^*$ is well-defined in \eqref{MainTh-proof-01-timestar}. Furthermore, 
		
		\be\label{MainTh-proof-01a}
	    u(t, \cdot) \in \mathcal{V}_{0}(\epsilon) \subset {\color{black}\mathcal{U}_{0}}(\epsilon)\subset {\color{black}\mathcal{U}_{0}}(\alpha),
	    \ee
		
		\noindent 
		for all $t\in [0, T^*)$.
		
		\medskip
	    Consider now the functions  $c(t), a(t), \theta(t)$  given  in Corollary \ref{0stParProp2} and notice that, in view of \eqref{MainTh-proof-01a}, we can consider these functions defined on the whole interval $[0, T^*)$.
		
		\medskip 
		Now suppose that $T^*<+\infty$ and consider
		\[\zf(t, \cdot)=e^{-i\theta(t)}u(t,\cdot + a(t)) - \phi_{c(t)}(\cdot),\quad t\in [0, T^*),\] 
		\noindent
		where $\big(c(t), a(t), \theta(t)\big) \in  (-\cf, \cf)\times \R^2$. 
		
        \medskip 
		{\color{black}Then, having in mind the global theory in \cite{Gallo} which guarantees that $\|\rho_c(\zf)\|_{L^2}$ verifies \eqref{condittion-cor1},} using the coercivity of $E_2$ \eqref{E2} around the dark soliton \eqref{genexpanE2} in Proposition \ref{anticor1}  and   Corollary \ref{2ndParProp2} with $(a,\theta)= (0,0)$, we obtain
		\begin{equation}\label{MainTh-proof-02}
			\begin{split}
				\|\zf(t, \cdot)\|_{\mathcal{H}_0}^2 + &\|\phi_{c(t)}^3\rho_{c(t)}(\zf(t, \cdot))\|^2_{L^2}
				\leq  \frac{1}{\tilde{\Gamma}^2}\Big(\tilde{\Gamma}\big(E_2[\phi_{c(t)} + \zf] - E_2[\phi_0]\big) + c^2(t) + \|\zf(t, \cdot)\|_{\mathcal{H}_0}^3\Big)\\
				&=  \frac{1}{\tilde{\Gamma}^2}\Big(\tilde{\Gamma}
				\big(E_2[e^{-i\theta(t)}u(t,\cdot + a(t))] - E_2[\phi_0]\big) + c^2(t) + A_0\sqrt{\epsilon}\|\zf(t, \cdot)\|^2_{\cH_0}\Big).
			\end{split}
		\end{equation}	
		\noindent
		Selecting $\epsilon$ such that $A_0\sqrt{\epsilon} < \frac{1}{2}\tilde{\Gamma}^2$ and using the conservation of the $E_2$ energy \eqref{E2} one gets
		\begin{equation}\label{MainTh-proof-03}
			\|\zf(t, \cdot)\|_{\mathcal{H}_0}^2 + \|\phi_{c(t)}^3\rho_{c(t)}(\zf(t, \cdot))\|^2_{L^2} \leq 
			\frac{2}{\tilde{\Gamma}^2}\Big(\tilde{\Gamma}\big(E_2[u_0] - E_2[\phi_0]\big) + c(t)^{2}\Big),
		\end{equation}
		for all $t\in [0,T^*)$. 
		
		\medskip 
		Now, from the expansion \eqref{E2expanBlack} with $\zf = u_0 - \phi_0$  there exists a positive constant $\tilde{k}_1$ such that
		\be\label{MainTh-proof-04}
		E_2[u_0] - E_2[\phi_0]\le \tilde{k}_1d_0^2(u_0, \phi_0),
		\ee
		with $\tilde{k}_1$ independent of $u_0$. Then,  putting this estimate into \eqref{MainTh-proof-03}  and using \eqref{black-dark-comparison} we have that there exists {\color{black}a universal positive constant $\widetilde{K},$ non depending on $\cf,$} such that 
		
		\begin{equation}\label{newCoercivityDARK-a}
			\begin{split}
			\|\zf(t, \cdot)\|_{\mathcal{H}_0}^2 + \|\phi_0^3\rho_{c(t)}(\zf(t, \cdot))\|^2_{L^2}
			& \le 	\widetilde{K}\big(\|\zf(t, \cdot)\|_{\mathcal{H}_0}^2 + \|\phi_{c(t)}^3\rho_{c(t)}(\zf(t, \cdot))\|^2_{L^2}\big)\\ 
			&\leq\frac{2\widetilde{K}}{\tilde{\Gamma}^2}\Big(\tilde{\Gamma}\tilde{k}_1 d_0^2(u_0, \phi_0) + c^2(t) \Big)\\
			&\le \frac{2\widetilde{K}\tilde{k}_1}{\tilde{\Gamma}}d_0^2(u_0, \phi_0) +  \frac{2\widetilde{K}}{\tilde{\Gamma}^2}\cf^2.
			\end{split}
		\end{equation}
	
	\medskip
	\noindent
     So,  we have 
    \begin{equation*}\label{newCoercivityDARK-b}
	d_0(e^{-i\theta(t)}u(t,\cdot + a(t)),\phi_{c(t)}) =	\Big(\|\zf(t, \cdot)\|_{\mathcal{H}_0}^2 + \|\phi_0^3\rho_{c(t)}(\zf(t, \cdot))\|^2_{L^2} \Big)^{\frac12}\le \Big( \frac{2\widetilde{K}\tilde{k}_1}{\tilde{\Gamma}}d_0^2(u_0, \phi_0) +  \frac{2\widetilde{K}}{\tilde{\Gamma}^2}\cf^2\Big)^{\frac12}
    \end{equation*}
	and hence from \eqref{d01} we have 
	\begin{equation}\label{newCoercivityDARK-c}
	\begin{split}
		d_0(e^{-i\theta(t)}u(t,\cdot + a(t)),\phi_0) & \le \Big( \frac{2\widetilde{K}\tilde{k}_1}{\tilde{\Gamma}}d_0^2(u_0, \phi_0) +  \frac{2\widetilde{K}}{\tilde{\Gamma}^2}\cf^2\Big)^{\frac12} + d_0(\phi_{c(t)}, \phi_0)\\
		&\le  \Big(\frac{2\widetilde{K}\tilde{k}_1}{\tilde{\Gamma}}\Big)^{\frac12}d_0(u_0, \phi_0) +  \frac{\sqrt{2\widetilde{K}}}{\tilde{\Gamma}}\cf + \tilde{k}_2\cf,
	\end{split}
	\end{equation}
	for some positive constant $\tilde{k}_2$.  Now we reduce $\cf$, if necessary, to hold
    \begin{equation}\label{newCoercivityDARK-d}
    \Big(\frac{\sqrt{2\widetilde{K}}}{\tilde{\Gamma}}+ \tilde{k}_2\Big)\cf< \frac{\epsilon}{4}
    \end{equation}
    \noindent
    and we also consider $u_0$ satisfying
    \begin{equation}\label{newCoercivityDARK-e}
    d(u_0, \phi_0)< \min \bigg\{ \frac{\epsilon}{2},\, \frac{\epsilon}{4}\Big(\frac{\tilde{\Gamma}}{2\widetilde{K}\tilde{k}_1}\Big)^{\frac12}\bigg\}.
    \end{equation}
    
    \medskip
    \noindent 
	Then, combining \eqref{newCoercivityDARK-c}, \eqref{newCoercivityDARK-d} and \eqref{newCoercivityDARK-e}, we get 
	\be\label{finalEstim}
	d_0(e^{-i\theta(t)}u(t,\cdot + a(t)),\phi_0) < \frac{\epsilon}{2} ,\quad \text{for all}\quad t\in [0, T^*), 
	\ee
	\noindent
	which contradicts the definition of $T^*< \infty$ in \eqref{MainTh-proof-01-timestar}, due to the continuity of the flow of  the solution $u(t, \cdot)$ with respect to the metric $d_0$. Then, $T^*=\infty$ and the proof of \eqref{defstability} is finished.

	\medskip 
	Finally, from Proposition \ref{prop3} one gets 
	\[
	\sup_{t\in\R}|a'(t)|+|\theta'(t)|<  A_*\epsilon,
	\]
	\noindent
    for some positive constant $A_*$.  This completes the proof of Theorem \ref{teoremaMAIN}.
\end{proof}


\appendix

	\section{Proof of \eqref{darksoliton}}\label{AppenDarkSol}
	In order to prove that \eqref{darksoliton} is a solution of \eqref{edodark}, we propose a suitable ansatz \eqref{candidate}:
	
	\be\label{AppDS0}
	{\color{black}\Phi_c}(\xi)=\frac{ia_1 + a_2\tanh(k \xi)}{\sqrt{1+ a_3\tanh^2(k \xi)}},\quad \xi=x-ct.
	\ee
	
	\noindent
	This ansatz {\color{black} must reduce} to the black solution \eqref{black5gp} when $c=0$, therefore this implies that $a_1$ has to be dependent on $c$ in some way. We make the following selection for 
	\[
	a_1=c\tilde{a}_1a_2,
	\]
	\noindent
	with $\tilde{a}_1$ to be determined. Hence, we recast \eqref{AppDS0} as follows:
	
	\be\label{AppDS1}
	{\color{black}\Phi_c}(\xi)=\frac{ic\tilde{a}_1a_2+a_2\tanh(k\xi)}{\sqrt{1+a_3\tanh(k\xi)^2}},
	\ee
	\noindent
	where $\tilde{a}_1,k,a_2,a_3$ are parameters to be determined imposing that \eqref{AppDS1} is a solution of \eqref{edodark}. Therefore, substituting \eqref{AppDS1} into \eqref{edodark} and simplifying (here $X=\tanh(k\xi),~~D=1 + a_3 \tanh(k\xi)^2$), we get

	\be\label{AppDS2}
	\begin{aligned}
		&{\color{black}\Phi_c}'' -ic{\color{black}\Phi_c}' + (1-|{\color{black}\Phi_c}|^4){\color{black}\Phi_c}
		=\frac{a_2}{D^{5/2}}\sum\limits_{i=0}^5r_iX^i,
	\end{aligned}
	\ee
	\noindent
	where $r_i,~i=0,\dots,5,$ are the following complex coefficients
	
	\be\label{AppDS3}\begin{aligned}
		&r_0=-ic(k + \tilde{a}_1 k^2 a_3 + \tilde{a}_1^5 c^4 a_2^4 - \tilde{a}_1),\\
		&r_1=(-\tilde{a}_1 c^2 k a_3 + k^2 (-3a_3-2) - \tilde{a}_1^4 c^4 a_2^4 + 1),\\
		&r_2=-i c(2 \tilde{a}_1 k^2(-a_3-2)a_3 - k (1 -a_3) + 2 \tilde{a}_1^3 c^2 a_2^4 -  2 \tilde{a}_1 a_3),\\
		&r_3=-(-\tilde{a}_1 c^2 k a_3 (1 - a_3) + k^2 (-4a_3-2) + 2 \tilde{a}_1^2 c^2 a_2^4 - 2 a_3),\\
		&r_4=-ic(\tilde{a}_1a_2^4-\tilde{a}_1a_3^2-a_3k+\tilde{a}_1a_3(3+2a_3)k^2),\\
		&r_5=(-a_2^4 - a_3(k^2 - \tilde{a}_1 c^2 k a_3 - a_3)).\\                                                                            
	\end{aligned}\ee
	\noindent
	
	Now, we impose that 
	\be\label{AppDS4}
	r_i=0,~~\forall i=0,\dots5,
	\ee
	and look for non trivial solutions (i.e. $\phi\neq0$). Starting with the last equation $r_5=0$, we get 
	
	\be\label{r2}
	a_2^4 =  a_3 (-k^2 + \tilde{a}_1 c^2 k a_3 + a_3).
	\ee
	\noindent
	Substituting the above value for $a_2^4$ into system \eqref{AppDS4}, we get that the equation  $r_4=0$ is solved for 
	
	\be\label{r}
	a_3 = -\frac{-1+2k\tilde{a}_1}{\tilde{a}_1(2k+c^2\tilde{a}_1)}.
	\ee
	
	\noindent
	Therefore, with these values for $a_2^4,a_3,$ the group of  \eqref{AppDS3} is recasted as follows ($H=\frac{1+c^2\tilde{a}_1^2}{(2k+c^2\tilde{a}_1)^2}$)
	
	\be\label{AppDS5}\begin{aligned}
		&r_0=-ickHM_0=0,\\
		&r_1=-\frac{kH}{\tilde{a}_1}M_0=0,\\
		&r_2=-ickHM_1=0,\\
		&r_3=-\frac{kH}{\tilde{a}_1}M_1=0,\\
		&r_4=0,\\
		&r_5=0,\\                                                                            
	\end{aligned}\ee
	\noindent
	with 
	\be\label{AppDS6}
	\begin{aligned}
		&M_0=6 k^2 + 6 \tilde{a}_1^4 c^4 k^2 + \tilde{a}_1 k (5 c^2 - 4 (k^2 + 1)) + \tilde{a}_1^3 c^2 k (-5 c^2 + 4 (k^2 + 1))\\
		&\qquad + \tilde{a}_1^2(c^4 - 4 c^2 (2 k^2 + 1)),\\
		&M_1=-8 k^2 + 8 \tilde{a}_1 k^3 + c^2 (1 - 10 \tilde{a}_1 k + 12 \tilde{a}_1^2 k^2) - 4  +   8 \tilde{a}_1 k.
    \end{aligned}\ee
	
	\medskip
	\noindent
	Solving  
	\[
	M_1=0,
	\]
	\noindent
	for $\tilde{a}_1,$ we get (selecting e.g. the $+$ root)
	
	\be\label{AppA}
	\tilde{a}_1= \frac{\left(5 c^2-4\right)-4 k^2 +\sqrt{13 c^4+8 c^2 \left(7 k^2+1\right)+16 \left(k^2+1\right)^2}}{12 c^2 k}.
	\ee
	Now, rewriting $M_0$ \eqref{AppDS6} with $\tilde{a}_1$ as in \eqref{AppA}, we get 
	
	\be\label{newM0}\begin{aligned}
	&M_0=\frac{(4k^2+c^2-4)}{144c^2k^2}m_0(c,k),
	\end{aligned}	\ee
	\noindent
	with
\be\label{newM00}\begin{aligned}
	m_0(c,k)=	\Big[& 16-16 c^2+19 c^4+32 k^2+80 c^2 k^2+16 k^4\\
	&\qquad+\left(5 c^2-4 k^2-4\right) \sqrt{13 c^4+8 c^2 \left(7 k^2+1\right)+16 \left(k^2+1\right)^2}\Big].
\end{aligned}	\ee
	Finally, selecting 
		\be\label{Appd}
		k=\frac12\sqrt{4-c^2},
		\ee

\noindent
we get 

	\[
	M_0=0,
	\]
	
\noindent
and we have solved system \eqref{AppDS4}, and therefore \eqref{AppDS1} is a solution. Note that for these values of $\tilde{a}_1$ and $k$, the factor $H$ is well defined; in fact, $H=\frac{6 c^2+\left(3 c^2-4\right) \sqrt{3 c^2+4}+8}{c^2 \left(\sqrt{3 c^2+4}+4\right)^2}$. In order to compare this solution with \eqref{darksoliton}, we rewrite it as follows: firstly note that with this value of $k$, \eqref{AppA} and \eqref{r} reduce to
		
		\be\label{AppAnew}
		\tilde{a}_1=\frac{3 c^2-4+2 \sqrt{4+3 c^2}}{3 c^2 \sqrt{4-c^2}},
		\ee
		\noindent
		and 
		
		\be\label{rnew}
		a_3= -\frac{3 \left(4-c^2\right) \left(\sqrt{3 c^2+4}-2\right)}{\left(4+\sqrt{3 c^2+4}\right) \left(3 c^2-4+2 \sqrt{3 c^2+4}\right)},
		\ee
		\noindent
		and hence, from \eqref{r2}, with the above values of $\tilde{a}_1,k,a_3$, and simplifying, we get (taking for instance a real $+$ root)
		
		\be\label{r2new}\begin{aligned}
			a_2&=\frac{3 c \left(c^2-4\right)}{\sqrt{2} \sqrt{-18 c^4+\left(3 \sqrt{4-c^2} \sqrt{-3 c^4+8 c^2+16}+80\right) c^2+4 \left(\sqrt{4-c^2} \sqrt{-3 c^4+8 c^2+16}-8\right)}}\\
			&=\frac{3 c \sqrt{4-c^2}}{\sqrt{2} \sqrt{3 \left(\sqrt{3 c^2+4}+6\right) c^2+4 \left(\sqrt{3 c^2+4}-2\right)}}
			=\frac{3 c \sqrt{4-c^2}}{\sqrt{2}\sqrt{18 c^2-8+\left(3 c^2+4\right)^{3/2}}}.
		\end{aligned}\ee
		\noindent
		Therefore
		\[
		\sqrt{2} a_2=\mu_2.
		\]
		\noindent
		Now, from ansatz \eqref{AppDS1}, and values \eqref{AppAnew} and \eqref{r2new}, we get that
		
		\be\label{mu1new}\begin{aligned}
			c\tilde{a}_1a_2&=c\times \left(\frac{3 c^2+2 \sqrt{3 c^2+4}-4}{3 c^2 \sqrt{4-c^2}}\right)
			\times \left(\frac{3 c \sqrt{4-c^2}}{\sqrt{2}\sqrt{18 c^2-8+\left(3 c^2+4\right)^{3/2}}}\right)\\
			&=\frac{3 c^2-4+2 \sqrt{3 c^2+4}}{\sqrt{2}\sqrt{18 c^2-8+\left(3 c^2+4\right)^{3/2}}},
		\end{aligned}\ee
		\noindent
		and hence
		\[
		\sqrt{2} c\tilde{a}_1a_2=\mu_1.
		\]
		
		Finally note that
		
		\[
		k=\kappa,
		\]
		\noindent
		
		and with \eqref{mu1new} and \eqref{r2new}, we get 
		
		\[
		\frac{\mu_1^2+\mu_2^2}{2+2a_3}-1=0,
		\]
		\noindent
		and then
		\[
		a_3=\mu.
		\]
	
	\section{Proof of Lemma \ref{lemma-elemental-calculus}}\label{0a}
	The proof of this identity is made by quadratures. Making the change $s=\sqrt{2b}\tan \theta$ we get the following equalities for the indefinite integrals 
	\begin{equation}
		\label{Lemma-Calculus-Integral-a}
		\int\frac{ds}{(b-s^2)\sqrt{s^2+2b}}=\int\frac{\sec \theta\;
			d\theta}{b(1-2\tan ^2 \theta)}=\int\frac{\cos \theta\;
			d\theta}{b(1-3\sin ^2 \theta)}.
	\end{equation}
	Now, by using the change $\rho = \sqrt{3}\sin \theta$ we obtain
	\begin{equation}\label{Lemma-Calculus-Integral-b}
		\int\frac{\cos \theta\; d\theta}{b(1-3\sin ^2
			\theta)}=\int\frac{d\rho}{\sqrt{3}b(1-\rho^2)}=\frac{1}{2b\sqrt{3}}
		\ln\left(\frac{1+\rho}{1-\rho}\right).
	\end{equation}
	Combining (\ref{Lemma-Calculus-Integral-a}) and (\ref{Lemma-Calculus-Integral-b}) the result follows from the Fundamental Theorem of Calculus. 
	
	{\color{black}\section{Proof of \eqref{LinfinitoPertubationEstimateOK}}\label{0b}
	Having in mind that $\rho_c=|\phi_c+\zf|^2-|\phi_c|^2=2\re(\phi_c\bar{\zf})+|\zf|^2$, it turns out that
	\[
	 |\phi_c+\zf|^2= |\phi_c|^2 + \rho_c,
	\]
\noindent
and therefore, we have 
\be\label{num1}
 \|\zf\|_{L^\infty}\lesssim(1+\|\rho_c\|_{L^\infty}^{1/2})\lesssim(1+\|\rho_c\|_{L^\infty}).
\ee
\noindent
On the other hand, 
\[\begin{aligned}
   \|\rho_c\|_{L^\infty}&\lesssim\|\rho_c\|_{L^2}^{1/2}\|\rho_c'\|_{L^2}^{1/2}\\
   &\lesssim\|\rho_c\|_{L^2}^{1/2}(\|\zf\|_{L^\infty} + \|\zf'\|_{L^2} + \|\zf\|_{L^\infty}\|\zf'\|_{L^2})^{1/2}.
  \end{aligned}
\]
\noindent
 Hence, using Lemma \ref{lemma-integral-black-dark-normH0}, we get
 
 \be\label{num2}
  \|\rho_c\|_{L^\infty}\lesssim\|\rho_c\|_{L^2}^{1/2}(\|\zf\|_{L^\infty}^{1/2} + \|\zf\|_{\mathcal{H}_0}^{1/2} + \|\zf\|_{L^\infty}^{1/2}\|\zf\|_{\mathcal{H}_0}^{1/2}).
 \ee
\noindent
Now, substituting \eqref{num2} into \eqref{num1} and using Young's inequality, we obtain

\be\label{LinfinitoPertubationEstimate1}\begin{aligned}
   \|\zf\|_{L^\infty}&\lesssim(1+\|\rho_c\|_{L^2} + \|\rho_c\|_{L^2}^{1/2}\|\zf\|_{\mathcal{H}_0}^{1/2}+\|\rho_c\|_{L^2}\|\zf\|_{\mathcal{H}_0})\\
  & \lesssim(1+\|\rho_c\|_{L^2})(1+\|\zf\|_{\mathcal{H}_0}).
  \end{aligned}
\ee
}
	
	\section{Computation of some \texorpdfstring{$L^2$}{Lg1} and \texorpdfstring{$L^\infty$}{Lg2} norms}\label{0}
	
	We collect some $L^2$ and $L^\infty$ norms needed along this work, in the following sections. 
	Hereafter, we will consider by $K$ the smallest of the constants that allow us to get the corresponding upper bound.

	\subsection{\texorpdfstring{$L^2$}{Lg1a} norms}\label{0L2}
	
	We first compute the associated $\mathcal{H}_0$ norm in the distance $d_0$  \eqref{d0}. By  definition,
	\[\|\phi_0 - \phi_c\|^2_{\mathcal{H}_0}= \|\phi_0' - \phi_c'\|^2_{L^2} + \|\sqrt{\eta_0}(\phi_0 - \phi_c)\|^2_{L^2},\]
	\noindent
	therefore we split the computation in two steps:
	\noindent
	first we consider (with $R_c$ in \eqref{real-dark}) \newline
	\noindent
	\[\begin{aligned}&\|\phi_0' - \phi_c'\|^2_{L^2}=\int_\R(\phi_0' - \phi_c')(\phi_0' - \bar{\phi}_c') = 
		\int_\R((\phi_0')^2 + |\phi_c'|^2 -2R_c'\phi_0').\\
	\end{aligned}\]
	
	\medskip
	\noindent
	Then, expanding in $c$ the last integrand, we note that this $L^2$ norm is bounded above, at small speeds $|c|\le \cf,$ with $\cf\ll1,$ by
	
	\be\label{part1H0}
	\|\phi_0' - \phi_c'\|^2_{L^2}\leq K\int_\R \Big(-\frac{9\left(\tanh ^2(x) \sech^4(x)\right)}{8 \left(\tanh ^2(x)-3\right)^3}c^2 \Big)dx
	=  \frac{K}{32}\Big(12 - 5\sqrt{3}\log(2+\sqrt{3})\Big)c^2.
	\ee
	
	\medskip
	\noindent
	On the other hand, we consider
	\noindent
	\[\begin{aligned}&\|\sqrt{\eta_0}(\phi_0 - \phi_c)\|^2_{L^2}=\int_\R\eta_0(\phi_0 - \phi_c)(\phi_0 - \bar{\phi}_c) = 
		\int_\R\eta_0((\phi_0)^2 + |\phi_c|^2 -2R_c\phi_0),\\
	\end{aligned}\]
	\noindent
	which again behaves (proceeding as above), at small speeds $|c|\le \cf,$ with $\cf\ll1,$ as
	
	\be\label{part2H0}\leq K\int_\R \Big(\frac{27  \left(\tanh ^4(x)+2 \tanh ^2(x)-3\right)}{8 \left(\tanh ^2(x)-3\right)^3}c^2 \Big)dx
	=  K\frac{3}{32}\Big(12 - \sqrt{3}\log(2-\sqrt{3})\Big)c^2.
	\ee
	
	\noindent
	Finally summing  \eqref{part1H0} and \eqref{part2H0} and simplifying, we get the $\mathcal{H}_0$ norm in \eqref{d0}
	
	\[
	\|\phi_0 - \phi_c\|^2_{\mathcal{H}_0}\leq K\frac{c^2}{16}\Big(24-\sqrt{3}\log(2+\sqrt{3})\Big).
	\]

\medskip 
\noindent
	With respect to \eqref{L2-ddark-raizetac},  we first compute $|\frac{\phi_c'}{\sqrt{\eta_c}}|^2$ as
	
	\[\begin{aligned}
		|\frac{\phi_c'}{\sqrt{\eta_c}}|^2&=\frac{\partial_x\phi_c\partial_x\overline{\phi}_c}{(\sqrt{\eta_c})^2}\\
		&=\frac{-2 \kappa ^2 \sech^4(\kappa  x) \left(\mu_1^2 \mu^2 \tanh ^2(\kappa  x)+\mu_2^2\right)}{\left(1+\mu \tanh ^2(\kappa  x)\right) \left(\mu_1^4+2 \left(\mu_1^2 \mu_2^2-4 \mu\right) \tanh ^2(\kappa  x)+\left(\mu_2^4-4 \mu^2\right) \tanh ^4(\kappa x)-4\right)}.
	\end{aligned}\]
	
	\medskip
	\noindent
	Therefore, integrating and having in mind the constraint relation \eqref{murelation} we have that
	\[\begin{aligned}
		\Big\|\frac{\phi_c'}{\sqrt{\eta_c}} \Big\|_{L^2}^2:&= \int_\R \frac{-2 \kappa ^2 \sech^4(\kappa  x) \left(\mu_1^2 \mu^2 \tanh ^2(\kappa  x)+\mu_2^2\right)dx}{\left(1+\mu \tanh ^2(\kappa  x)\right) \left(\mu_1^4+2 \left(\mu_1^2 \mu_2^2-4 \mu\right) \tanh ^2(\kappa  x)+\left(\mu_2^4-4 \mu^2\right) \tanh ^4(\kappa x)-4\right)}\\
		&=\frac{-4\ka}{(\mu_1^2-2)}\left(\sqrt{|\mu|}\arctanh(\sqrt{|\mu|}) +  \frac{(\mu_2^2 + 2\mu + \mu\mu_1^2)
			\arctan\left(\sqrt{\frac{2\mu +\mu_2^2}{2 + \mu_1^2}}\right)}
		{\sqrt{2+\mu_1^2}\sqrt{2\mu+\mu_2^2}}\right)\\
		&\leq \frac{\pi }{3 \sqrt{3}}+\frac{2 }{\sqrt{3}}\arccotanh\left(\sqrt{3}\right).
	\end{aligned}\]

\medskip
\noindent
With respect to the $L^2$-norms in \eqref{new-maxCoercivity-a} and \eqref{new-maxCoercivity-b}, we get after an expansion in $c, ~~|c|< \cf,$\; with $\cf\ll1,$ in the integrand of \eqref{new-maxCoercivity-a},  that this $L^2$ norm is bounded above by
\be\label{new-maxCoercivity-aAppen}\begin{aligned}
	\Big\|\frac{|\phi_c|^2 - \phi_0^2}{\sqrt{\eta_0}}\Big\|^2_{L^2}
	&\leq K\int_{\R}\frac{3\sech^2(x)\left(\tanh ^2(x)+9\right)^2}{64  \left(\tanh ^2(x)-3\right)^2 \left(\tanh ^2(x)+3\right)}c^4dx\\
	&\leq K\frac{c^4}{192}\left(36 + \sqrt{3}\pi +12\sqrt{3}\log(2+\sqrt{3})\right)\\
	&\leq \frac{K}{4}c^4, \end{aligned}
\ee

\medskip 
\noindent
and therefore we obtain \eqref{new-maxCoercivity-a}. Now in \eqref{new-maxCoercivity-b},  expanding again in $|c|< \cf,$\; with $\cf\ll1,$ we get that 

\be\label{new-maxCoercivity-bAppen}\begin{aligned}
	\Big\|\frac{\phi_0\eta_0-R_c\eta_c}{\sqrt{\eta_0}}\Big\|^2_{L^2}
	&\leq K \int_\R\frac{3}{512} c^4 \frac{(-1+\tanh^4(x))\tanh^2(x)}{(-3+\tanh^2(x))^5(3+\tanh^2(x))}dx\\
	&\leq K\frac{3}{512} c^4 \left(630-8\sqrt{3}\pi -39\sqrt{3} \log \left(\sqrt{3}+2\right)\right)\\
	&\leq\frac{K}{4}c^4. \end{aligned}
\ee

\medskip
\noindent
We now compute the $L^2$ norm in \eqref{new-maxCoercivity-aL2-1}. Firstly we write explicitly the integrand

\[
 \frac{(\eta_c|\phi_c|^2-\eta_0\phi_0^2)^2}{\eta_0}=\frac{\eta_c\frac{\mu_1^2+\mu_2^2 \tanh ^2(\kappa  x)}{2 \left(\mu \tanh ^2(\kappa  x)+1\right)}-\eta_0\frac{2 \tanh ^2(x)}{3-\tanh ^2(x)}}{\left(1-\frac{4 \tanh ^4(x)}{\left(3-\tanh ^2(x)\right)^2}\right)},
\]
\noindent
\medskip
In fact, in the same small speed region $|c|< \cf,$\; with $\cf\ll1,$ we get, after an expansion of the above expression,  that 

\[\begin{aligned}
 \Big\|\frac{\eta_c|\phi_c|^2-\eta_0\phi_0^2}{\sqrt{\eta_0}}\Big\|_{L^2}^2
	&\leq K\int_{\R}\frac{81\sech^4(x)}{8\left(3-\tanh ^2(x)\right)^3}c^4dx=K\frac{9c^4}{32}\sqrt{3}\log\left(\frac{1}{2-\sqrt{3}}\right)
	\\
	&\leq \frac{K}{\sqrt2}c^4. \end{aligned}
\]

\medskip
We now compute the $L^2$ norm in \eqref{new-maxCoercivity-aL2-2} proceeding in the same way. In fact after an expansion of the integrand  in the small speed region $|c|< \cf,$\; with $\cf\ll1,$ we get

\[\begin{aligned}
 \Big\|\frac{\eta_c|\phi_c|^2R_c^2 - \eta_0\phi_0^4}{\sqrt{\eta_0}}\Big\|_{L^2}^2
	&\leq K\int_{\R}\frac{27\sech(x)\tanh^4(x)}{8\left(3-\tanh ^2(x)\right)^3}c^4dx
	\leq \frac{K}{2}c^4. \end{aligned}
\]

\subsection{\texorpdfstring{$L^\infty$}{Lg2a}  norms}\label{0Linfty}

We now compute the  $L^{\infty}$ norm in \eqref{new-maxCoercivity-a00}. Expanding it in the small speed region  $|c|< \cf,$\; with $\cf\ll1,$ we get 
\[
\frac{|\phi_c|^2 - \phi_0^2}{(1+x^2)\eta_c} \leq \frac{(9 -4x\tanh(x)+\tanh^2(x))}{8(1+x^2)(3+\tanh^2(x))^3}c^2,
\]
\medskip
\noindent
uniformly in $x\in\R,$ and whose maximum value ($\frac38$) is attained at $x=0$. Therefore we get that

\be\label{new-maxCoercivity-a000}
\begin{aligned}
	\Big\| \frac{|\phi_c|^2 - \phi_0^2}{(1+x^2)\eta_c} \Big\|_{L^\infty}
	\leq \frac38c^2.
\end{aligned}
\ee

\medskip 

Now, we justify the uniform pointwise estimate in \eqref{black-dark-comparison}. First we note that for any given $x\in \R$ we have 
\be\label{black-dark-comparison-proof1}
\frac{\sqrt{3}}{2}|x|=\kappa(1)|x| \leq \kappa(c)|x|,\; \forall \; |c|\leq1,
\ee
with $\kappa$ defined in \eqref{kappa}. Now observe that 
\be\label{black-dark-comparison-proof2}
\lim\limits_{x\to 0^{\pm}}\frac{|\phi_0(x)|^2}{\big|\phi_0(\sqrt{3}x/2)\big|^2}=\frac{4}{3} \quad\text{and} \quad \lim\limits_{x\to \pm\infty}\frac{|\phi_0(x)|^2}{\big|\phi_0(\sqrt{3}x/2)\big|^2}=1, 
\ee
from which we can conclude that
\be\label{black-dark-comparison-proof3}
|\phi_0(x)|^2 \lesssim \frac{\tanh^2(\sqrt{3}|x|/2)}{3-\tanh^2(\sqrt{3}|x|/2)},\quad \forall \, x\in \R.
\ee
Now, selecting $s:=\tanh(\sqrt{3}|x|/2)$ and using that the function $s \mapsto \frac{s^2}{3-s^2}$ is increasing on the interval $[0,1]$ we have, combining \eqref{black-dark-comparison-proof1} and \eqref{black-dark-comparison-proof3}, that 
\be\label{black-dark-comparison-proof4}
|\phi_0(x)|^2 \lesssim \frac{\tanh^2(\kappa(c)|x|)}{3-\tanh^2(\kappa(c)|x|)},\quad \forall \, (x,c)\in \R\times [-1,1].
\ee
Finally, using that 
\[
\lim_{c\rightarrow0^{\pm}}\mu_2(c)=\pm\frac{2}{\sqrt{3}}\quad\text{and}\quad -\frac{1}{3}\leq\mu(c)\leq0
\]
we conclude, from \eqref{black-dark-comparison-proof4}, that there exists $\cf \ll 1$ such that 
	\be\label{black-dark-comparison-proof5}
	\begin{split}
	|\phi_0(x)|^2 & \lesssim \frac{\mu_1^2(c)+ \mu_2^2(c)\tanh^2(\kappa(c)|x|)}{2+ 2\mu(c)\tanh^2(\kappa(c)|x|)}\\
	& \sim |\phi_c(x)|^2
	\end{split}
	\ee
	for all $(x,c)\in \R\times (-\cf,\cf)$.

\section{Computation of \texorpdfstring{$det\mathcal{F}(c)$}{Lg0} and matrix elements of \texorpdfstring{$\mathcal{M}(c,z)$}{Lg00}}\label{Ap1}

First of all, we remember the expression of $det\mathcal{F}(c)$ \eqref{DetFI}:

\be\label{DetFIAppen}\begin{aligned}
	det\mathcal{F}(c)  = &\int_\R \langle i \eta_{c},-\partial_c\phi_c\rangle_\C\\
	&\times \Big\{\int_\R\langle  \eta_{c},-i\phi_c\rangle_{\C}\int_\R \langle iR_{c} \eta_{c},\partial_x\phi_c\rangle_{\C}
	-
	\int_\R\langle  \eta_{c},\partial_x\phi_c\rangle_{\C}\int_\R \langle iR_{c} \eta_{c},-i\phi_c\rangle_{\C}
	\Big\},
\end{aligned}
\ee

\noindent
for all  $c\in(-2,2)$. Now, we compute the  five different elements in \eqref{DetFIAppen} at $c=0$. 
We start with the first factor in \eqref{DetFIAppen} \newline

\medskip
\be\label{AAdetF}\begin{aligned}
	\int_\R\langle  i\eta_{c}&,-\partial_c\phi_c\rangle_{\C} = -\int_\R\re\Big(i\eta_{c}\partial_c\bar{\phi}_c\Big),
\end{aligned}
\ee
\noindent
and at $c=0$ we get 

\[
-\re\Big(i\eta_{c}\partial_c\bar{\phi}_c\Big)\Big|_{c=0}=-\frac{9 \left(\tanh ^4(x)+2 \tanh ^2(x)-3\right)}{2 \sqrt{2} \sqrt{3-\tanh ^2(x)} \left(\tanh ^2(x)-3\right)^2}.
\]
\noindent
Now, integrating the above expression, we obtain

\be\label{AAdetF2}
\begin{aligned}
	\int_\R\langle  i\eta_{c}&,-\partial_c\phi_c\rangle_{\C}\Big|_{c={\color{black}0}} = -2.
\end{aligned}
\ee

\bigskip
The second factor is
\be\label{CCdetF}
\begin{aligned}
	&\int_\R\langle  iR_c\eta_{c},\partial_x\phi_c\rangle_{\C} = \int_\R\re\Big(iR_c\eta_{c}\partial_x\bar{\phi}_c\Big),
\end{aligned}
\ee
\medskip
\noindent
and at $c=0$ we have that
\[
\re\Big(iR_c\eta_{c}\partial_x\bar{\phi}_c\Big)\Big|_{c={\color{black}0}}=0,
\]
\noindent
and we get that
\be\label{CCdetF2}
\begin{aligned}
	&\int_\R\langle  iR_c\eta_{c},\partial_x\phi_c\rangle_{\C}\Big|_{c={\color{black}0}} = 0.
\end{aligned}
\ee

\bigskip
The corresponding third factor is

\be\label{DDdetF}
\int_\R\langle  \eta_{c},-i\phi_c\rangle_{\C}= \int_\R\re\Big(i\eta_{c}\bar{\phi}_c\Big),
\ee
\medskip
\noindent
with
\[
\re\Big(i\eta_{c}\bar{\phi}_c\Big)\Big|_{c={\color{black}0}}=0.
\]
Therefore, as above we have,
\be\label{DDdetF2}
\int_\R\langle  \eta_{c},-i\phi_c\rangle_{\C}\Big|_{c={\color{black}0}}=0.
\ee
\medskip
\noindent

\bigskip
The fourth factor is

\be\label{BBdetF}
\begin{aligned}
	\int_\R\langle  \eta_{c},\partial_x\phi_c\rangle_{\C} = \int_\R\re\Big(\eta_{c}\partial_x\bar{\phi}_c\Big),
\end{aligned}
\ee
\medskip
\noindent
with
\[
\re\Big(\eta_{c}\partial_x\bar{\phi}_c\Big)\Big|_{c={\color{black}0}}=\frac{9 \sqrt{2} \left(\tanh ^4(x)+2 \tanh ^2(x)-3\right) \text{sech}^2(x)}{\sqrt{3-\tanh ^2(x)} \left(\tanh ^2(x)-3\right)^3}.
\]
\medskip
\noindent 
Therefore integrating, we get

\be\label{BBdetF2}
\begin{aligned}
	\int_\R\langle  \eta_{c},\partial_x\phi_c\rangle_{\C}\Big|_{c={\color{black}0}} = {\color{black}\frac85.}
\end{aligned}
\ee
\noindent

\bigskip
Finally the last factor is

\be\label{EEdetF}
\begin{aligned}
	&\int_\R\langle  iR_c\eta_{c},-i\phi_c\rangle_{\C}= \int_\R\re\Big(-R_c\eta_{c}\bar{\phi}_c\Big),
\end{aligned}
\ee 
\medskip
\noindent
with
\[
\re\Big(-R_c\eta_{c}\bar{\phi}_c\Big)\Big|_{c={\color{black}0}} = -\frac{6 \left(\tanh ^2(x) \left(\tanh ^4(x)+2 \tanh ^2(x)-3\right)\right)}{\left(\tanh ^2(x)-3\right)^3}.
\]
\noindent 
Integrating, we get 
\be\label{EEdetF2}
\begin{aligned}
	&\int_\R\langle  iR_c\eta_{c},-i\phi_c\rangle_{\C}\Big|_{c={\color{black}0}}= -\frac{1}{2} \sqrt{3} \log \left(\sqrt{3}+2\right).
\end{aligned}
\ee 
\medskip
\noindent
Finally, gathering the five terms above, we have that

\be\label{expandDetF0}
det\mathcal{F}({\color{black}0}) = -\frac{8\sqrt{3}}{5}  \log \left(\sqrt{3}+2\right)={\color{black}-\frac85 E_2[\phi_0].}
\ee

\medskip
\noindent
Now, using a classical continuity argument, we get that, for $c\in[0,\cf),$ with smaller $\cf\lll1,$ if necessary,

\be\label{expandDetFc}
det\mathcal{F}(c) \neq 0. 
\ee
\noindent

\bigskip
\noindent
We now list here the computed matrix elements of $\mathcal{M}(c,z)$ in \eqref{matrixM2}. By parity reasons some terms vanish. Namely

\be\label{prodinternos}
m_{11}=\int_\R\langle \eta_{c(t)}, \partial_x\phi_{c(t)}\rangle_\C=\eqref{BBdetF},
\ee

\be\label{prodinternos2}
m_{12}=\int_\R\langle \eta_{c(t)}, \partial_c\phi_{c(t)}\rangle_\C=0,
\quad\text{and}\quad m_{13}=\int_\R\langle  \eta_{c(t)}, -i\phi_{c(t)}\rangle_\C=\eqref{DDdetF}.
\ee
\medskip
\noindent

Now, we list the products coming from the second orthogonality condition in \eqref{genortogonalityDark}:

\be\label{prodinternos2COM}
m_{21}=\int_\R\langle i\eta_{c(t)}, \partial_x\phi_{c(t)}\rangle_\C=0,
\ee

\be\label{prodinternos2COMbis}
m_{22}=\int_\R\langle i\eta_{c(t)}, \partial_c\phi_{c(t)}\rangle_\C=- \eqref{AAdetF},
\quad\text{and}\quad m_{23}=\int_\R\langle  i\eta_{c(t)}, -i\phi_{c(t)}\rangle_\C= 0,
\ee

\noindent
and finally, the products coming from the third orthogonality relation of \eqref{genortogonalityDark}:

\be\label{prodinternos4}
\begin{aligned}
	m_{31}=\int_\R\langle iR_{c(t)}\eta_{c(t)}, \partial_x\phi_{c(t)}\rangle_\C= \eqref{CCdetF},
\end{aligned}
\ee

\be\label{prodinternos5}
m_{32}=\int_\R\langle iR_{c(t)}\eta_{c(t)}, \partial_c\phi_{c(t)}\rangle_\C=0,\quad
\text{and}\quad m_{33}=\int_\R\langle iR_{c(t)}\eta_{c(t)}, -i\phi_{c(t)}\rangle_\C= \eqref{EEdetF}.
\ee

\noindent
In the limit case when $(c={\color{black}0},z=0)$, the matrix \eqref{matrixM2} has the following simple expression

\be\label{M00}
\mathcal{M}(0,0):=
\begin{pmatrix}
\frac85 & 0 & 0\\
0 & 2 & 0\\
0 & 0 & -\frac12E_2[\phi_0]
\end{pmatrix},
\ee
\noindent
with 
\[\det\mathcal{M}({\color{black}0},0)=-\frac{8}{5}E_2[\phi_0].\]

\end{document}